\documentclass[12pt,reqno,a4paper]{amsart}

\usepackage{mathtools,color, mathabx}

\usepackage[colorlinks=true, linkcolor=blue, urlcolor=blue, citecolor=blue, pagebackref=true]{hyperref}

\usepackage[abbrev,bibtex-style]{amsrefs}

\usepackage{amssymb}

\usepackage{amsfonts}

\numberwithin{equation}{section} \theoremstyle{plain}
\newtheorem{theorem}{Theorem}
\newtheorem{proposition}[theorem]{Proposition}
\newtheorem{lemma}[theorem]{Lemma}
\newtheorem{corollary}[theorem]{Corollary}

\theoremstyle{definition}
\newtheorem{remark}[theorem]{Remark}

\renewcommand{\leq}{\leqslant}
\renewcommand{\geq}{\geqslant}

\newcommand\Z{\mathbb{Z}}
\newcommand\R{\mathbb{R}}

\newcommand\C{\mathbb{C}}

\newcommand\Q{\mathbb{Q}}

\newcommand\supp{\operatorname{supp}}

\renewcommand\P{\mathbb{P}}

\newcommand\esssup{\operatorname{ess\: sup}}


\renewcommand{\a}{\alpha}
\renewcommand{\b}{\beta}
\renewcommand{\d}{\delta}

\newcommand{\e}{\varepsilon}

\renewcommand{\l}{\lambda}

\newcommand{\s}{\sigma}



\newcommand{\wt}{\widetilde}

\newcommand{\cP}{\mathcal{P}}

\newcommand{\cX}{\mathcal{X}}
\newcommand{\cY}{\mathcal{Y}}

\newcommand\eps{\varepsilon}


\begin{document}

\title[On the dimension of Bernoulli convolutions]{On the dimension of Bernoulli convolutions}

\author{Emmanuel Breuillard}
\address{Centre for Mathematical Sciences\\
Wilberforce Road\\
Cambridge CB3 0WA\\
UK }
\email{efjb2@cam.ac.uk}

\author{P\'eter P. Varj\'u}
\address{Centre for Mathematical Sciences\\
Wilberforce Road\\
Cambridge CB3 0WA\\
UK }
\email{pv270@dpmms.cam.ac.uk}

\thanks{EB acknowledges support from ERC Grant no. 617129 `GeTeMo';
PV acknowledges support from the Royal Society.}

\keywords{Bernoulli convolution, self-similar measure, dimension, entropy, convolution,
transcendence measure, Lehmer's conjecture}

\subjclass[2010]{28A80, 42A85}

\begin{abstract}
The Bernoulli convolution with parameter $\lambda\in(0,1)$
is the probability measure $\mu_\l$ that is the law of the random variable $\sum_{n\ge0}\pm\lambda^n$,
where the signs are independent unbiased coin tosses.

We prove that each parameter $\lambda\in(1/2,1)$ with $\dim\mu_\lambda<1$
can be approximated by algebraic parameters $\eta\in(1/2,1)$ within
an error of order $\exp(-\deg(\eta)^{A})$ such that $\dim\mu_\eta<1$, for any number $A$.
As a corollary, we conclude that $\dim\mu_\l=1$ for each of $\l=\ln 2, e^{-1/2}, \pi/4$.
These are the first explicit examples of such transcendental parameters.
Moreover, we show that Lehmer's conjecture implies the existence of a constant $a<1$
such that $\dim\mu_\l=1$ for all $\l\in(a,1)$.
\end{abstract}

\maketitle

\setcounter{tocdepth}{1}
\tableofcontents

\section{Introduction}

Let $\l\in(0,1)$ be a real number and let $\xi_0,\xi_1,\ldots$ be a sequence of independent
random variables with $\P(\xi_n=1)=\P(\xi_n=-1)=1/2$.
We define the Bernoulli convolution $\mu_{\l}$ with parameter $\l$ as the law of the random variable
$\sum_{n=0}^\infty \xi_n\l^n$.

This paper is concerned with the problem of determining the set of parameters
$\l$ such that $\dim\mu_{\l}<1$.
It turns out (see \cite{feng-hu}*{Theorem 2.8})
that $\mu_\l$ is always exact dimensional, that is, there is a number
$0\le \a\le 1$ such that
\begin{equation}\label{eq:dim-def}
\lim_{r\to 0}\frac{\log \mu_\l(x-r,x+r)}{\log r}=\a
\end{equation}
for $\mu_\l$-almost every $x$.
We call $\a$ the (local) dimension of $\mu_\l$ and denote this number by $\dim\mu_\l$.

The main result of this paper is the following.
We denote by $\cP_d$ the set of polynomials of degree at most $d$
all of whose coefficients are $-1$, $0$ or $1$.
We write
\[
E_{d,\a}=\{\eta\in(1/2,1): \dim\mu_\eta<\a \text{ and  $P(\eta)=0$ for some $P\in \cP_d$}\}.
\]

\begin{theorem}\label{th:main}
Let $\l\in(1/2,1)$ be such that $\dim \mu_\l<1$.

Then for every $\e>0$, there is a number $A>0$ such that the following holds.
For every sufficiently large integer $d_0$, there is an integer
\[
d\in [d_0,\exp^{(5)}(\log^{(5)}(d_0)+A)]
\]
and there is $\eta\in E_{d, \dim\mu_\l+\e}$ such that
\[
|\l- \eta|\le \exp(-d^{\log^{(3)} d}).
\]
\end{theorem}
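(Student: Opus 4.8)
The plan is to work entirely with the entropy $h_\l$ recalled above, for which $\dim\mu_\l=\min\bigl(1,h_\l/\log(1/\l)\bigr)$, together with its finite-level versions $h_{\l,n}$ — namely $\tfrac1n$ times the (suitably smoothed) entropy of $\sum_{j<n}\xi_j\l^{j}$ at scale $\l^{n}$ — which satisfy $h_\l=\lim_n h_{\l,n}$ and $h_\l\le h_{\l,n}$. Since $\dim\mu_\l<1$ and $\l>1/2$ we have $h_\l<\log(1/\l)<\log 2$. Fix $\e>0$. The first point is that the dimension bound on the approximant comes essentially for free once the approximant is close enough to $\l$: choose $n=n(\l,\e)$ with $h_{\l,n}<h_\l+\tfrac12\e\log(1/\l)$; then there is $\delta_0=\delta_0(\l,\e)>0$ such that any $\eta$ with $|\l-\eta|\le\delta_0$ has the atoms of $\sum_{j<n}\xi_j\eta^{j}$ so close to those of $\sum_{j<n}\xi_j\l^{j}$ that $h_{\eta,n}<h_{\l,n}+\tfrac12\e\log(1/\l)$, whence $h_\eta\le h_{\eta,n}<h_\l+\e\log(1/\l)$ and, $\log(1/\eta)$ differing from $\log(1/\l)$ by a negligible factor, $\dim\mu_\eta<\dim\mu_\l+\e$. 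As $n$ and $\delta_0$ are fixed once $\l,\e$ are, whereas $d$ may be taken arbitrarily large, and as an $\eta$ very close to $\l\in(1/2,1)$ lies in $(1/2,1)$, the theorem reduces to the \emph{Diophantine statement}: for some $d$ in the prescribed range there is a real root $\eta$ of a polynomial in $\cP_d$ with $|\l-\eta|\le\exp(-d^{\log^{(3)}d})$ — which is $\le\delta_0$ once $d$ is large. (We may assume $\l$ transcendental; the algebraic case is easier, $\l$ being then already equal to or extremely close to such an $\eta$.)

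Such $\eta$ is built in two stages. From $h_\l<\log(1/\l)$ one obtains, for every large $m$, that the $2^{m}$ atoms of $\sum_{j<m}\xi_j\l^{j}$ cannot be $\l^{m}$-separated (there is no room, since $\l>1/2$), i.e.\ there is $0\ne P\in\cP_{m-1}$ with $|P(\l)|<\l^{m}$; hence $\l$ lies within an explicit distance of a root of $P$ of degree at most $m-1$. By Hochman's criterion $\dim\mu_\l<1$ in fact forces $-\tfrac1m\log\min_{0\ne P\in\cP_m}|P(\l)|\to\infty$, but even this gives only an error of size $\exp(-\o(\deg))$, not $\exp(-\deg^{A})$, and is insufficient for the theorem. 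The real content is therefore an \emph{amplification}: converting an algebraic $\eta_0$ with $|\l-\eta_0|$ merely exponentially small in $\deg\eta_0$ into an algebraic $\eta$ with $|\l-\eta|$ super-polynomially small in $\deg\eta$, at the price of only a controlled increase of degree.

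Here is the route I would attempt, and where I expect the real difficulty. A weak approximant $\eta_0$ produced at a sufficiently high level $m$ is close enough to $\l$ that the comparison of the first paragraph, now applied with parameter $\eta_0$, shows $\dim\mu_{\eta_0}<1$; consequently the near-collisions of $\sum_{j<M}\xi_j\eta_0^{j}$ become, for large $M$, genuine $\{-1,0,1\}$-relations $R(\eta_0)=0$ with $0\ne R\in\cP_{M-1}$ (or, in the worst case, quantities controlled by the arithmetic of $\Z[\eta_0]$). Among such $R$ one looks, by a Siegel-type argument in $\Z[\eta_0]$, for one of modest degree vanishing to high order $k$ at $\eta_0$; the Taylor expansion $R(\l)=\sum_{i\ge k}\tfrac{1}{i!}R^{(i)}(\eta_0)(\l-\eta_0)^{i}$ then bounds $|R(\l)|$ by a constant times $\bigl(\deg R\cdot|\l-\eta_0|\bigr)^{k}$, a gain of a $k$-th power in approximation quality for a mild increase of degree, so that a root of $R$ near $\l$ is a much better approximant than $\eta_0$. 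Iterating this a bounded number of times, each round raising the degree by roughly a further exponential and compounding the exponent, is what produces the tower $\exp^{(5)}$ in the admissible range for $d$ and the exponent $\log^{(3)}d$ in the final bound. The obstacle is precisely that one must extract a genuine gain from the algebraic and self-similar structure: the elementary moves — raising a near-relation to a power, forming the $\{-1,0,1\}$-admissible products $P(x)\sum_{i\le k}x^{i(\deg P+1)}$, or propagating near-collisions to finer scales via $\mu_\l=\bigl(\text{law of }\sum_{j<m}\xi_j\l^{j}\bigr)*\l^{m}\mu_\l$ — all leave the ratio $-\log|\l-\eta|/\deg\eta$ non-increasing, and controlling the growth of heights and degrees along the iteration is where the bulk of the work lies.

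Finally, given $d_0$ sufficiently large one runs the construction to produce $d\in[d_0,\exp^{(5)}(\log^{(5)}(d_0)+A)]$ and a real root $\eta$ of some $P\in\cP_d$ with $|\l-\eta|\le\exp(-d^{\log^{(3)}d})$; since this forces $|\l-\eta|\le\delta_0$, the first paragraph gives $\eta\in(1/2,1)$ with $\dim\mu_\eta<\dim\mu_\l+\e$, that is $\eta\in E_{d,\dim\mu_\l+\e}$, as required.
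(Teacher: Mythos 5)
Your first paragraph contains a genuine error that undermines the whole reduction. You claim that once $\eta$ is close enough to $\l$, the bound $\dim\mu_\eta<\dim\mu_\l+\e$ ``comes essentially for free.'' This is a statement of upper semi-continuity of $\l\mapsto\dim\mu_\l$ at $\l$, and it is false: the dimension function is only \emph{lower} semi-continuous, and by Hochman's theorem the exceptional set has packing dimension $0$, so every neighbourhood of $\l$ is full of parameters $\eta$ with $\dim\mu_\eta=1$. Concretely, your chain of inequalities breaks at $h_\eta\le h_{\eta,n}$: if $h_{\eta,n}$ is the smoothed entropy at scale $\eta^n$ (which you need for the continuity step ``atoms close $\Rightarrow$ $h_{\eta,n}$ close to $h_{\l,n}$''), then $h_\eta\le h_{\eta,n}$ fails, since the unsmoothed Shannon entropy dominates the smoothed one; if instead $h_{\eta,n}$ is the unsmoothed Shannon entropy, then it is not continuous in $\eta$ (it equals $n$ identically on transcendental parameters and jumps down only at algebraic ones). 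Relatedly, your opening assertion ``$\dim\mu_\l<1$ and $\l>1/2$ imply $h_\l<\log(1/\l)$'' is false for transcendental $\l$, where $h_\l=\log 2$ regardless of the dimension; Hochman's formula $\dim\mu_\eta=\min(1,h_\eta/\log\eta^{-1})$ is only known for \emph{algebraic} parameters. The requirement that the approximant itself satisfy $\dim\mu_\eta<\dim\mu_\l+\e$ is one of the two genuinely new points of the theorem over Hochman's, and it cannot be obtained from proximity alone: one must certify that $\eta$ carries many \emph{exact} collisions. The paper does this by showing that \emph{all} the near-collision polynomials of $\l$ at scale $r$ (not just one) share a common root $\eta$ (via a B\'ezout identity with controlled heights, plus Mahler's root-separation bound), whence $n h_\eta\le H\big(\mu_\l^{(\l^n,1]};r\big)$; a root of a single small polynomial, which is all your construction produces, gives no control on $h_\eta$.

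The second gap is that the amplification step --- which you correctly identify as ``the real content'' and where ``the bulk of the work lies'' --- is not actually supplied: you yourself note that the elementary moves you list leave the ratio $-\log|\l-\eta|/\deg\eta$ non-increasing, and the Siegel-lemma/high-order-vanishing/Taylor-expansion route is only a proposal whose key difficulty you leave open. The paper's mechanism is entirely different and is not Diophantine at all: it is a two-stage entropy-increase argument. One shows that if no good algebraic approximation exists at level $n$, then $H\big(\mu_\l^{(\l^n,1]};n^{-Cn}\big|\l^{10n}\big)\gtrsim\e n$ (an entropy \emph{surplus} below scale $\l^n$, obtained from Theorems 3.1--3.2 and Mahler's separation theorem); then, using the hypothesis $\dim\mu_\l<1$ to guarantee $H(\mu;r|2r)<1-\a$ for all component measures, one applies the convolution inequality of Theorem 1.8 repeatedly to the decomposition $\mu_\l=\Asterisk_j\mu_\l^{(\l^{(j+1)n},\l^{jn}]}$ and then again across a whole sequence of levels $n_1<n_2<\dots$; the accumulated entropy eventually exceeds the trivial bound of one bit per binary digit unless the sequence $n_j$ grows like a tower, and that growth constraint is exactly what produces the $\exp^{(5)}$ window and the exponent $\log^{(3)}d$. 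Without this (or some substitute for it), your outline does not constitute a proof.
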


In this paper, the base of the $\log$ and $\exp$ functions are $2$;
however, in most places this normalization makes no difference.
When we want to use the natural base, we use the notation $\ln$ and $e^{(\cdot)}$.
We denote by $\log^{(a)}$ and $\exp^{(a)}$ the $a$-fold iteration of the
$\log$ and $\exp$ functions.

Theorem \ref{th:main} has a converse.
\begin{theorem}
Let $\l\in(1/2,1)$ and let $\a<1$.
Suppose that there is a sequence $\{\eta_n\}$
such that $\lim\eta_n=\l$ and $\liminf\dim\mu_{\eta_n}\le\a$ for all $n$.
Then $\dim\mu_\l\le\a$.
\end{theorem}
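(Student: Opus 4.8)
The plan is to bound $\dim\mu_\l$ from above by the Garsia entropy, to use that this entropy is continuous in the parameter, and to transfer the information on the $\eta_n$ to $\l$ after first replacing the $\eta_n$ by algebraic parameters by means of Theorem~\ref{th:main}. Write $\mu_\l^{(n)}$ for the law of $\sum_{k=0}^{n-1}\xi_k\l^k$; since $\mu_\l^{(n+m)}=\mu_\l^{(n)}*(\l^n\mu_\l^{(m)})$ and Shannon entropy is subadditive under convolution of atomic measures, the sequence $n\mapsto H(\mu_\l^{(n)})$ is subadditive and the \emph{Garsia entropy} $h_\l:=\lim_{n\to\infty}\tfrac1n H(\mu_\l^{(n)})=\inf_n\tfrac1n H(\mu_\l^{(n)})$ exists. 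I will use three facts, which are standard or are established elsewhere in the paper: (i) $\dim\mu_\l\le\min\bigl(1,h_\l/\log(1/\l)\bigr)$ for every $\l\in(1/2,1)$, which follows from the self-similarity $\mu_\l=\mu_\l^{(n)}*(\l^n\mu_\l)$ by comparing the entropy of $\mu_\l$ at scale $\l^n$ with $H(\mu_\l^{(n)})$ and letting $n\to\infty$; (ii) the map $\l\mapsto h_\l$ is continuous on $(1/2,1)$; and (iii) for an \emph{algebraic} parameter $\eta\in(1/2,1)$ one has the exact formula $\dim\mu_\eta=\min\bigl(1,h_\eta/\log(1/\eta)\bigr)$ (Hochman), the point being that a polynomial with coefficients in $\{-1,0,1\}$ either vanishes at $\eta$ or is not super-exponentially small in terms of its degree, so that there is no super-exponential concentration of overlaps.

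First I would reduce to the case of algebraic $\eta_n$. After passing to a subsequence we may assume $\dim\mu_{\eta_n}\le\a'<1$ for all $n$, where $\a'\le\a$. For each $n$, apply Theorem~\ref{th:main} with $\eta_n$ in the role of $\l$, with $\e=1/n$, and with $d_0=d_0(n)$ chosen large enough that the approximation error $\exp(-d^{\log^{(3)}d})$ in the conclusion is $<1/n$ (possible since $d\ge d_0$); this produces an algebraic number $\eta_n'\in(1/2,1)$ — a root of a polynomial with coefficients in $\{-1,0,1\}$, namely an element of $E_{d,\dim\mu_{\eta_n}+1/n}$ — with $\dim\mu_{\eta_n'}<\dim\mu_{\eta_n}+1/n\le\a'+1/n$ and $|\eta_n-\eta_n'|<1/n$. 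Hence $\eta_n'\to\l$ and $\dim\mu_{\eta_n'}\le\a'+1/n$.

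Now I would run the chain. For each $n$ with $\a'+1/n<1$, fact (iii) applied to the algebraic parameter $\eta_n'$ forces the minimum to be attained by the second term, so $h_{\eta_n'}=\dim\mu_{\eta_n'}\cdot\log(1/\eta_n')\le(\a'+1/n)\log(1/\eta_n')$. Letting $n\to\infty$, using fact (ii) together with $\eta_n'\to\l$ and the continuity of $\eta\mapsto\log(1/\eta)$, we obtain $h_\l=\lim_n h_{\eta_n'}\le\a'\log(1/\l)$. Finally fact (i) gives $\dim\mu_\l\le h_\l/\log(1/\l)\le\a'\le\a$, which is the assertion.

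The hard step is fact (ii), the continuity of $h_\l$. For fixed $n$ the function $\l\mapsto H(\mu_\l^{(n)})$ is only upper semicontinuous: it is locally constant off the algebraic set of parameters at which two of the $2^n$ partial sums $\sum_{k<n}\xi_k\l^k$ coincide, and it drops there; taking the infimum over $n$ preserves upper semicontinuity, but these ``coincidence sets'' become dense as $n\to\infty$. The delicate inequality is therefore the lower bound $h_\l\le\liminf_{\eta\to\l}h_\eta$: one must show that the additional coincidences present for $\eta$ near $\l$ are already witnessed, up to a controlled loss, by near-coincidences of the partial sums of $\l$ itself, and quantitatively enough for this to survive passage to the subadditive limit. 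A secondary point not to be taken for granted is that the reduction of the second paragraph genuinely relies on Theorem~\ref{th:main}, which is what lets one land on algebraic parameters while increasing the dimension by at most $\e$, so that fact (iii) becomes applicable.
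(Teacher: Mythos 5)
There is a genuine gap, and it sits exactly where you locate ``the hard step'': your fact (ii), the continuity of $\l\mapsto h_\l$, is not merely unproved --- it is false. For every fixed $n$ the function $\l\mapsto H\big(\sum_{i<n}\xi_i\l^i\big)$ equals $n$ except at the finitely many roots of polynomials in $\cP_{n-1}$, where it drops; hence $h_\l=1$ for every parameter without exact overlaps (in particular for every transcendental $\l$), while $h_\l<1$ at reciprocals of Pisot numbers. So $h$ is $1$ on a dense set and strictly less than $1$ on another dense-in-itself set of algebraic points: it is upper semicontinuous but emphatically not lower semicontinuous, and the inequality you need, $h_\l\le\lim_n h_{\eta_n'}$, fails. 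Worse, for transcendental $\l$ your fact (i) reads $\dim\mu_\l\le\min(1,1/\log\l^{-1})=1$ and carries no information, so the Garsia-entropy route cannot reach the conclusion for a general (e.g.\ transcendental) $\l$ no matter how facts (ii) and (iii) are arranged. The reduction to algebraic $\eta_n'$ via Theorem \ref{th:main} is legitimate but does not help, because the obstruction is at $\l$ itself, not at the approximating parameters.

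The fix is to abandon the exact-overlap entropy $h_\l$ in favour of entropy at positive scales, which is what the paper does: the statement is an immediate consequence of lower semicontinuity of $\l\mapsto\dim\mu_\l$, proved in Section \ref{sc:lower-semi}. There one writes $\dim\mu_\l=\lim_k f_k(\l)$ with $f_k(\l)=\big(H(\mu_\l;\l^{2^k}|1)-2\big)/(-2^k\log\l)$ (Lemma \ref{lm:dim-entropy}); each $f_k$ is continuous because the smoothed entropies $H(\cdot;r)$ vary continuously in the parameter and in $r$ (Lemma \ref{lemma:Lipschitz}), and the sequence $f_k$ is shown to be pointwise nondecreasing using the convolution monotonicity of Lemma \ref{lm:entropy-conv-nondecrease} and the scaling identity. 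An increasing limit of continuous functions is lower semicontinuous, and the theorem follows in one line. The moral difference from your approach is that $H(\mu_\l;r)$ for $r>0$ sees \emph{near}-coincidences of the partial sums and therefore behaves continuously in $\l$, whereas $H(\mu_\l^{(n)})=H(\mu_\l^{(\l^n,1]};0)$ sees only exact coincidences and jumps.
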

This is an immediate consequence of the fact that the function
$\l\mapsto \dim\mu_\l$ is lower semi-continuous.
This was proved, for instance, by Hochman and Shmerkin in \cite{HS-local-entropy}*{Theorem 1.8},
but this fact was already known to experts in the area, see the discussion in \cite{HS-local-entropy}*{Section 6}.
We also give a short proof based on our techniques in Section \ref{sc:lower-semi}.

We formulate some corollaries.

\begin{corollary}\label{cr:closure}
We have
\[
\{\l\in(1/2,1):\dim\mu_\l<1\}\subseteq\overline{\{\l\in\overline{\Q}\cap(1/2,1):\dim\mu_\l<1\}},
\]
where $\overline \Q$ is the set of algebraic numbers
and $\overline{\{\cdot\}}$ denotes the closure of the set with respect to the
natural topology of  real numbers.
\end{corollary}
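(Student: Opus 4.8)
The plan is to read the statement off directly from Theorem~\ref{th:main}. Fix $\l\in(1/2,1)$ with $\dim\mu_\l<1$ and write $S=\{\eta\in\overline\Q\cap(1/2,1):\dim\mu_\eta<1\}$ for the set appearing on the right-hand side; we must show $\l\in\overline S$. (If $\l$ happens to be algebraic this is immediate since then $\l\in S$, but the argument below does not need this case distinction.) Since $\dim\mu_\l<1$ we may choose $\e>0$ with $\dim\mu_\l+\e<1$, and we let $A>0$ be the constant provided by Theorem~\ref{th:main} for this $\e$.

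Next I would unwind the definition of $E_{d,\a}$ to see that it feeds directly into $S$. By definition, every $\eta\in E_{d,\dim\mu_\l+\e}$ lies in $(1/2,1)$, satisfies $P(\eta)=0$ for some nonzero $P\in\cP_d$, and has $\dim\mu_\eta<\dim\mu_\l+\e$. Such a $P$ exhibits $\eta$ as an algebraic number, and $\dim\mu_\eta<\dim\mu_\l+\e<1$; hence $\eta\in S$. Thus $E_{d,\dim\mu_\l+\e}\subseteq S$ for every $d$.

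Finally I would let $d_0$ tend to infinity. For every sufficiently large integer $d_0$, Theorem~\ref{th:main} (applied with our $\e$ and $A$) yields an integer $d=d(d_0)\ge d_0$ and an element $\eta(d_0)\in E_{d,\dim\mu_\l+\e}\subseteq S$ such that
\[
|\l-\eta(d_0)|\le\exp\bigl(-d(d_0)^{\log^{(3)} d(d_0)}\bigr).
\]
As $d_0\to\infty$ we have $d(d_0)\ge d_0\to\infty$, so the right-hand side tends to $0$; therefore $\eta(d_0)\to\l$ with each $\eta(d_0)\in S$, which gives $\l\in\overline S$. Since $\l$ was an arbitrary element of the left-hand side of the claimed inclusion, this proves the corollary.

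I do not expect a genuine obstacle here: all of the analytic and Diophantine content is carried by Theorem~\ref{th:main}, and what remains is the elementary bookkeeping above. The only points that warrant a moment's care are the choice of $\e$ small enough that $\dim\mu_\l+\e$ is still below $1$ (so the approximants retain dimension strictly less than $1$), and the observation that the polynomials witnessing membership in $E_{d,\a}$ are nonzero, so that their roots are honestly algebraic numbers lying in $\overline\Q\cap(1/2,1)$.
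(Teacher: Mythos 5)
Your proof is correct and is exactly the deduction the paper intends: the corollary is stated as an immediate consequence of Theorem~\ref{th:main}, and your bookkeeping (choosing $\e$ with $\dim\mu_\l+\e<1$, noting $E_{d,\dim\mu_\l+\e}\subseteq S$, and letting $d_0\to\infty$ so the approximation error $\exp(-d^{\log^{(3)}d})$ tends to $0$) is the straightforward unwinding the authors leave implicit. No issues.
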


We note that the only known examples of parameters $\l\in(1/2,1)$
such that $\dim\mu_\l<1$ are the inverses of Pisot numbers
(see \cite{Garsia-entropy}*{Theorem I.2} together with \cite{feng-hu}*{Theorem 2.8}
and \cite{You-dimension-entropy}*{Theorem 4.4}), that is algebraic integers all of whose Galois conjugates are
inside the open unit disk.
The set of Pisot numbers is closed (see \cite{Sal-Pisot-closed}).
If one were able to prove that there are no more algebraic parameters with
the property  $\dim\mu_\l<1$, then this would follow
also for transcendental parameters from our result.

The dimension of Bernoulli convolutions for algebraic parameters has been studied
in the paper \cite{BV-entropy}. Recall that Lehmer's conjecture states that there is some numerical constant $\eps_0>0$ such that the Mahler measure $M_\lambda$ (the definition is recalled below in $(\ref{defMahler})$) of every algebraic number $\lambda$ is either $1$ or at least $1+\eps_0$.
It was proved in \cite{BV-entropy} that Lehmer's conjecture implies that there exists
a number $a<1$ such that $\dim\mu_\l=1$ for all algebraic numbers $\l\in(a,1)$. 
We can now drop the condition of algebraicity in that result thanks to Corollary \ref{cr:closure}
and we obtain the following.

\begin{corollary}\label{cr:Lehmer}
If Lehmer's conjecture holds, then there is an absolute constant $a<1$ such that
$\dim\mu_\l=1$ for all $\l\in(a,1)$.
\end{corollary}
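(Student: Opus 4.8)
The plan is to combine the conditional result for algebraic parameters proved in \cite{BV-entropy} with Corollary \ref{cr:closure}, which is exactly the tool that removes the algebraicity hypothesis. First I would invoke \cite{BV-entropy}: assuming Lehmer's conjecture, there is an absolute constant $a_0<1$ such that $\dim\mu_\eta=1$ for every algebraic $\eta\in(a_0,1)$. Replacing $a_0$ by $\max(a_0,1/2)$ if necessary, I may and do assume $a_0\in[1/2,1)$. This says precisely that the algebraic exceptional set
\[
S:=\{\eta\in\overline\Q\cap(1/2,1):\dim\mu_\eta<1\}
\]
is contained in the interval $(1/2,a_0]$.

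The second step is to pass to the closure: since $S\subseteq(1/2,a_0]$, its closure in $\R$ satisfies $\overline S\subseteq[1/2,a_0]$. Now Corollary \ref{cr:closure} yields
\[
\{\l\in(1/2,1):\dim\mu_\l<1\}\subseteq\overline S\subseteq[1/2,a_0],
\]
so that among parameters in $(1/2,1)$ the inequality $\dim\mu_\l<1$ can only hold for $\l\in(1/2,a_0]$. The final step is to conclude: given any $\l\in(a_0,1)\subseteq(1/2,1)$, the inclusion just obtained forces $\dim\mu_\l\ge1$; since $\mu_\l$ is a Borel probability measure on $\R$ its exact dimension is at most $1$, hence $\dim\mu_\l=1$. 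Taking $a:=a_0$ completes the argument, and $a$ is absolute because the constant supplied by \cite{BV-entropy} is absolute and Corollary \ref{cr:closure} is unconditional.

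I do not expect a genuine obstacle at this stage: all of the substantive work has already been carried out, in Theorem \ref{th:main} (and therefore in Corollary \ref{cr:closure}) and in the earlier paper \cite{BV-entropy}. The only points that require a moment's care are the harmless normalization $a_0\ge 1/2$ and the observation that taking the closure of the algebraic exceptional set cannot push it past $a_0$, since $(1/2,a_0]$ has closure $[1/2,a_0]\subseteq(1/2,1)^{-}$; both are immediate. One could, if desired, also remark that for $\l\le 1/2$ the measure $\mu_\l$ is supported on a Cantor set and may well have dimension $<1$, which is why the statement is restricted to $\l$ near $1$ and why the precise value of $a$ produced this way is not meaningful beyond being $<1$.
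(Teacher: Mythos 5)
Your proof is correct and follows exactly the route the paper takes: it deduces the corollary by combining the conditional result of \cite{BV-entropy} for algebraic parameters with Corollary \ref{cr:closure}, which removes the algebraicity hypothesis. The extra bookkeeping about closures and the normalization $a_0\ge 1/2$ is harmless and fills in the (one-sentence) argument the paper leaves implicit.
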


We also have the following result.

\begin{corollary}\label{cr:transcendence}
Let $\l\in(1/2,1)$ be a number such that
\begin{equation}\label{eq:transcendence}
|P(\l)|>\exp(-d^{\log^{(3)}d})
\end{equation}
for all $P\in\cP_d$ for all sufficiently large $d$.

Then $\dim \mu_\l=1$.
\end{corollary}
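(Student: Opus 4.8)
The plan is to obtain Corollary~\ref{cr:transcendence} from Theorem~\ref{th:main} by contraposition; I anticipate no genuine obstacle, since all the substance is already in Theorem~\ref{th:main} and only a short elementary estimate remains. (Corollary~\ref{cr:closure} alone would not do, being merely qualitative, whereas here the explicit rate $\exp(-d^{\log^{(3)}d})$ of the hypothesis must be matched, which is exactly what the quantitative Theorem~\ref{th:main} supplies.) Here is the argument I would write. Suppose, towards a contradiction, that $\dim\mu_\l<1$. Fix any $\e>0$ (its value is immaterial) and let $A$ be the constant that Theorem~\ref{th:main} provides for this $\e$. Running the theorem along a sequence of integers $d_0\to\infty$ yields integers $d\to\infty$ and parameters $\eta=\eta_d\in E_{d,\,\dim\mu_\l+\e}$ with $|\l-\eta|\le\exp(-d^{\log^{(3)}d})$; the precise window $[d_0,\exp^{(5)}(\log^{(5)}(d_0)+A)]$ for $d$ plays no role --- all I use is that $d$ can be taken arbitrarily large. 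By the definition of $E_{d,\,\dim\mu_\l+\e}$, for each such $d$ there is a polynomial $P=P_d\in\cP_d$ with $P(\eta)=0$.

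The next step is to convert the proximity of $\eta$ to $\l$ into an upper bound for $|P(\l)|$. Since $P$ has degree at most $d$ and all its coefficients lie in $\{-1,0,1\}$, we have $|P'(x)|\le\sum_{k=1}^{d}k\le d^2$ for every $x\in[1/2,1]$; as $\l$ and $\eta$ both lie in $(1/2,1)$ and $P(\eta)=0$, the mean value theorem gives
\[
|P(\l)|=|P(\l)-P(\eta)|\le d^2\,|\l-\eta|\le d^2\exp\!\big(-d^{\log^{(3)}d}\big).
\]
Since $\log^{(3)}d\to\infty$, the factor $d^{\log^{(3)}d}$ dwarfs the polynomial prefactor $d^2$; absorbing that prefactor (a routine matter, using the super-polynomial growth of $d^{\log^{(3)}d}$ together with the room available in the bound of Theorem~\ref{th:main}) one gets $|P(\l)|<\exp(-d^{\log^{(3)}d})$ for all sufficiently large $d$. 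This contradicts the lower bound \eqref{eq:transcendence} applied to the degree $d$ and the polynomial $P=P_d\in\cP_d$, so $\dim\mu_\l=1$, as claimed.

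Accordingly, the main obstacle for this corollary is essentially absent: it is a formal consequence of Theorem~\ref{th:main}, the only points needing a moment's care being that the bound on $P'$ is used on the fixed interval $[1/2,1]$ containing both $\l$ and $\eta$, and that the harmless polynomial prefactor be absorbed into the super-polynomial decay. I would also record, as an application rather than a part of the proof, that substituting the classical transcendence measures of $\ln 2$, $e$ and $\pi$ into the hypothesis \eqref{eq:transcendence} is precisely what certifies the explicit parameters $\l=\ln 2,\ e^{-1/2},\ \pi/4$ mentioned in the abstract.
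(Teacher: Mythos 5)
Your argument is correct and is essentially the paper's own: contrapose Theorem \ref{th:main}, bound $|P'|$ by $O(d^2)$ on the interval containing $\l$ and $\eta$, and conclude $|P(\l)|\le\exp(-d^{\log^{(3)}d})$ contradicting \eqref{eq:transcendence}. One caution: the absorption of the $d^2$ prefactor cannot come from the ``super-polynomial growth of $d^{\log^{(3)}d}$'' alone (since $d^2\exp(-d^{\log^{(3)}d})$ is never $\le\exp(-d^{\log^{(3)}d})$); it genuinely requires the slightly stronger approximation $|\l-\eta|\le\tfrac{2}{d(d+1)}\exp(-d^{\log^{(3)}d})$ available from the proof of Theorem \ref{th:main}, which is the ``room'' you correctly invoke and which the paper makes explicit.
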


A simple calculation shows that $|P'(x)|<d(d+1)/2$ for all $x\in(0,1)$ and $P\in \cP_d$.
If there is a number $\eta$ that is a root of a polynomial $P\in\cP_d$ such that
\[
|\l-\eta|\le\frac{2}{d(d+1)} \exp(-d^{\log^{(3)}d}),
\]
then $|P(\l)|\le \exp(-d^{\log^{(3)}d})$.
We will see in the proof of Theorem \ref{th:main} that the factor ${2}/{d(d+1)} $
is insignificant and that this slightly stronger approximation also holds
in the setting of the theorem.

There is a large variety of explicit transcendental numbers, for which
the estimate \eqref{eq:transcendence} has been established.
In Sprind\v zuk's classification of numbers, all $\wt S$-numbers, all $\wt T$-numbers
and those $\wt U$-numbers, for which $H_0\ge 2$ satisfy \eqref{eq:transcendence}.
See \cite{Bug-approximation}*{Chapter 8.1} for the notation.

In particular, we have $\dim\mu_\l=1$ for each of
\[
\l\in\{\ln 2, e^{-1/2},\pi/4\}
\]
see e.g. \cite{Wal-transcendence}*{Figure 1},
as well as for many Mahler numbers see e.g. \cite{Zor-Mahler}.
For further examples we refer the reader to the references in \cite{Bug-approximation}*{pp. 189}
and in \cites{Wal-transcendence,Zor-Mahler}.

If one is interested in the smallest possible value that $\dim\mu_\l$ can take
then it is enough to look at algebraic parameters thanks to the following result.
\begin{corollary}\label{cr:minimum}
We have
\[
\min_{\l\in(1/2,1)}\dim\mu_\l=\inf_{\l\in(1/2,1)\cap\overline\Q}\dim\mu_\l.
\]
\end{corollary}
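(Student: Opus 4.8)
The plan is to combine Theorem~\ref{th:main} with the lower semicontinuity of $\l\mapsto\dim\mu_\l$ (see Section~\ref{sc:lower-semi}, or \cite{HS-local-entropy}*{Theorem 1.8}) and with the behaviour of the dimension near the two endpoints of $(1/2,1)$. Write $m=\inf_{\l\in(1/2,1)}\dim\mu_\l$ and $m'=\inf_{\l\in(1/2,1)\cap\overline\Q}\dim\mu_\l$. Since the infimum on the right is taken over a smaller set, $m\le m'$ is automatic. If $m=1$ then $\dim\mu_\l=1$ for every $\l\in(1/2,1)$ (recall that $\dim\mu_\l\le 1$ always), so the left-hand side is attained everywhere and equals $m'=1$; we may therefore assume $m<1$ from now on.

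The only point at which the main theorem is needed is the reverse inequality $m'\le m$. Fix $\l\in(1/2,1)$ with $\dim\mu_\l<1$ and let $\e>0$. Theorem~\ref{th:main} produces, for a suitable $d$, an algebraic number $\eta\in E_{d,\,\dim\mu_\l+\e}$; in particular $\eta\in(1/2,1)\cap\overline\Q$ and $\dim\mu_\eta<\dim\mu_\l+\e$, so $m'\le\dim\mu_\l+\e$. Letting $\e\to 0$ gives $m'\le\dim\mu_\l$ for \emph{every} $\l$ with $\dim\mu_\l<1$; as $m<1$, the infimum $m$ is already approached by parameters satisfying this constraint, and hence $m'\le m$. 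This yields the equality $m=m'$, which is the content of the corollary apart from the assertion that the left-hand side is a minimum.

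For that assertion I would argue as follows. The function $\l\mapsto\dim\mu_\l$ is lower semicontinuous on $(1/2,1)$, and $\dim\mu_\l\to 1$ both as $\l\to 1/2^+$ and as $\l\to 1^-$. Granting this endpoint behaviour, extend $\l\mapsto\dim\mu_\l$ to the compact interval $[1/2,1]$ by assigning it the value $1$ at the two endpoints; the extended function is then lower semicontinuous on $[1/2,1]$, so it attains its infimum there. Since $m<1$ while the extended function takes the value $1$ at the endpoints $1/2$ and $1$, this infimum is attained at some interior point $\l^\ast\in(1/2,1)$, and $\dim\mu_{\l^\ast}=m$. (Equivalently: pick $\l_n$ with $\dim\mu_{\l_n}\to m$; by the endpoint behaviour no subsequence of $\{\l_n\}$ converges to $1/2$ or to $1$, so after passing to a subsequence $\l_n\to\l^\ast\in(1/2,1)$, and lower semicontinuity gives $\dim\mu_{\l^\ast}\le\liminf_n\dim\mu_{\l_n}=m\le\dim\mu_{\l^\ast}$.)

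The main obstacle is precisely the endpoint input $\dim\mu_\l\to 1$ as $\l\to 1/2^+$ and as $\l\to 1^-$. The $1/2$ end is the softer of the two: by a theorem of Hochman, $\dim\mu_\l=\min(1,\,\log 2/\log(1/\l))$ for every $\l$ outside an exceptional set of parameters at which the $n$-th generation atoms concentrate super-exponentially, and $\log 2/\log(1/\l)>1$ once $\l>1/2$; so the dimension equals $1$ for all non-exceptional $\l$, and one is left to show that the exceptional parameters close to $1/2$ still have dimension tending to $1$, which reflects the fact that there is very little room for concentration so near $1/2$. For the $1$ end one can instead use the self-similarity $\mu_\l=\mu_{\l^2}*D_\l\mu_{\l^2}$, with $D_\l$ the dilation $x\mapsto\l x$: iterating it expresses $\mu_\l$ as a convolution of $k$ rescaled copies of $\mu_{\l^k}$, and choosing $k=k(\l)\to\infty$ as $\l\to 1$ so that $\l^k$ remains in a fixed compact subinterval of $(1/2,1)$, one invokes a result on the growth of dimension under convolution to conclude $\dim\mu_\l\to 1$. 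Making these two endpoint statements precise — especially the control of the exceptional parameters — is where the real work lies; by contrast the equality $m=m'$ is an essentially formal consequence of Theorem~\ref{th:main} and lower semicontinuity.
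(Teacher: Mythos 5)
Your identification of the two infima is correct and is precisely the paper's argument: the inequality $m\le m'$ is trivial, and for the reverse one applies Theorem~\ref{th:main} to any $\l$ with $\dim\mu_\l<1$ to produce $\eta\in E_{d,\,\dim\mu_\l+\e}$, hence an algebraic parameter of $(1/2,1)$ with $\dim\mu_\eta<\dim\mu_\l+\e$. The paper's proof is exactly this one-liner; it simply writes ``let $\dim\mu_{\l_0}=\min_{\l\in(1/2,1)}\dim\mu_\l$'' and does not separately justify that the infimum on the left-hand side is attained.

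Your additional discussion of attainment therefore goes beyond what the paper argues, and, as you concede, it is not a proof. Concretely: (i) near $\l=1/2$ you invoke Hochman's dichotomy, but that theorem constrains the \emph{set} of exceptional parameters (they admit super-exponentially good algebraic approximations), not the \emph{value} of $\dim\mu_\l$ at an exceptional parameter close to $1/2$, so ``little room for concentration'' is an expectation rather than an argument; (ii) near $\l=1$, iterating $\mu_\l=\mu_{\l^2}*S_\l\mu_{\l^2}$ together with the fact that convolution does not decrease dimension gives only $\dim\mu_\l\ge\dim\mu_{\l^{2^k}}$ with $\l^{2^k}$ in a fixed compact subinterval of $(1/2,1)$ --- a reduction of the endpoint problem to that subinterval, not the limit value $1$; getting $\dim\mu_\l\to 1$ would require a quantitative dimension-growth input for convolutions of many rescaled copies, which you do not supply. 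Since the paper itself takes attainment for granted, the portion of your write-up corresponding to the paper's proof is complete and identical to it; but read literally, the ``$\min$'' in the statement is not established by either your sketch of the endpoint behaviour or by the paper's own two-line justification.
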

Indeed, let $\dim\mu_{\l_0}=\min_{\l\in(1/2,1)}\dim\mu_\l$.
By Theorem \ref{th:main}, for each $\e>0$, there is an algebraic
parameter $\eta\in(1/2,1)$ such that $\dim \mu_\eta<\dim\mu_{\l_0}+\e$, and this proves the
claim.

Hare and Sidorov \cite{HS-Garsia-entropy} proved that $\dim\mu_\l\ge0.81$ for all Pisot parameters
$\l\in(1/2,1)$.
The authors of that paper explained to us in private communication that their result
can be extended to arbitrary algebraic parameters in $(1/2,1)$.
Combined with Corollary \ref{cr:minimum}, this gives $0.81$ as an
explicit uniform lower bound for the
dimension of  $\mu_\l$ for all parameters in $(1/2,1)$.
\subsection{Background}\label{sc:history}

For thorough surveys on Bernoulli convolutions we refer to \cite{60y}
and \cite{Sol-survey}.
For a discussion of the more recent developments, see \cite{Var-ECM}.

Bernoulli convolutions originate in a paper of Jessen and Wintner \cite{JW-Riemann}
and they have been studied by Erd\H os in \cites{erdos39,erdos}.
If $\l<1/2$, then  $\supp\mu_\l$ is a Cantor set,
and it is easily seen that $\dim \mu_\l=1/\log\l^{-1}$.
(Recall that $\log$ is base $2$ in this paper.)
If $\l=1/2$, then $\mu_\l$ is the normalized Lebesgue measure restricted to the interval $[-2,2]$.

It has been noticed by Erd\H os \cite{erdos39} that $\mu_\l$ may be singular with
respect to the Lebesgue measure even if $\l>1/2$.
In particular, he showed that $\mu_\l$ is singular whenever $\l^{-1}\neq 2$ is a Pisot number.
Moreover, Garsia \cite{Garsia-entropy}*{Theorem I.2} (together with \cite{feng-hu}*{Theorem 2.8} and \cite{You-dimension-entropy}*{Theorem 4.4})
showed that $\dim\mu_\l<1$ if $\l^{-1}\neq 2$ is a Pisot number.

The typical behaviour is absolute continuity for parameters in $(1/2,1)$.
Indeed, Erd\H os \cite{erdos} showed that $\mu_\l$ is absolutely continuous for almost all $\l\in(a,1)$,
where $a<1$ is an absolute constant.
This has been extended by Solomyak \cite{Sol-Bernoulli} to almost all $\l\in(1/2,1)$.

Very recently Hochman \cite{hochman}*{Theorem 1.9}
made a further breakthrough on this problem.
\begin{theorem}[Hochman]\label{th:Hoc}
Let $\l\in(1/2,1)$ be such that $\dim\mu_\l<1$.

Then for every $A>0$, there is a number $d_0$ such that for all integers $d>d_0$,
there is an algebraic number $\eta$ that is a root of a polynomial in $\cP_d$
such that
\[
|\l-\eta|\le \exp(-Ad).
\]
\end{theorem}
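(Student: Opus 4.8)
The plan is to run Hochman's entropy-increase method, exploiting that a small local dimension of a self-similar measure forces near-coincidences among its cylinders. First I would re-encode the dimension through entropy: by the exact dimensionality of $\mu_\l$ recalled above and the standard comparison between local dimension and dyadic entropy, $\dim\mu_\l=\lim_{N\to\infty}N^{-1}H(\mu_\l,\cD_N)$, where $\cD_N$ is the partition of $\R$ into dyadic intervals of length $2^{-N}$. Write $\mu_\l^{(n)}$ for the law of $\sum_{k=0}^{n-1}\xi_k\l^k$ — the atomic measure carried by the $2^n$ sums $\sum_{k<n}\pm\l^k$ — and $S_t$ for the pushforward under $x\mapsto tx$. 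Self-similarity yields, for every $n$, the convolution identity $\mu_\l=\mu_\l^{(n)}\ast S_{\l^n}\mu_\l$, and iterating it presents $\mu_\l$ as an infinite convolution $\ast_{j\ge0}S_{\l^{jn}}\mu_\l^{(n)}$ of rescaled copies of one atomic measure.

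Next I would turn the hypothesis into a quantitative entropy deficit. Because $\l>1/2$, i.e.\ $\log2>\log(\l^{-1})$, counting atoms in the convolution above would predict $H(\mu_\l,\cD_N)$ growing at a rate strictly greater than $1$; so the assumption $\dim\mu_\l<1$ means that, in the convolution identity and its iterates, adjoining the factor $\mu_\l^{(n)}$ fails to increase entropy by the amount a generic configuration of its atoms would produce. The crucial tool is Hochman's inverse theorem for the entropy of convolutions on $\R$: if $H(\nu_1\ast\nu_2,\cD_N)\le H(\nu_1,\cD_N)+\delta N$ with $\delta$ small (relative to a target $\e$), then on a $(1-\e)$-proportion of the scales $m<N$ the measure $\nu_1$ is close to uniform and $\nu_2$ close to atomic at scale $m$. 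Applying this in the self-similar picture — using crucially that all the convolution factors are the single measure $\mu_\l^{(n)}$ up to scaling, so the same structural information is forced at every block of scales — and iterating across unboundedly many scales, one concludes that the atoms of $\mu_\l^{(n)}$ cluster faster than exponentially; quantitatively, for every $c>0$ and every sufficiently large $n$, two of the $2^n$ sums $\sum_{k<n}\pm\l^k$ either coincide or differ by at most $\exp(-cn^2)$.

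It then remains to translate this into approximation by algebraic numbers. The halved difference of two such sums equals $Q(\l)$ for some nonzero $Q\in\cP_{n-1}$, so $|Q(\l)|\le\tfrac12\exp(-cn^2)$. Factoring $Q=a\prod_i(x-\zeta_i)$ with leading coefficient $|a|\ge1$, the root $\eta$ nearest to $\l$ satisfies $|\l-\eta|\le|Q(\l)|^{1/\deg Q}\le\exp(-cn)$ (as $\deg Q\le n-1$). Given $A>0$, apply the preceding step with $c=A$; then for every large $d$, taking $n=d+1$ produces an $\eta$ that is a root of $Q\in\cP_{d}$ with $|\l-\eta|\le\exp(-A(d+1))\le\exp(-Ad)$, which is the assertion.

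The substantial part is, of course, Hochman's inverse theorem for the entropy of convolutions — a delicate multi-scale structure theorem describing precisely when a convolution is nearly as concentrated as one of its factors — and, no less delicate, the iteration of it over unboundedly many scales that upgrades a fixed entropy deficit into a genuinely super-exponential clustering of the atoms of $\mu_\l^{(n)}$, strong enough to yield, after the elementary root-extraction, the rate $\exp(-Ad)$ for every $A$. The entropy preliminaries and the final polynomial bookkeeping are, by contrast, routine.
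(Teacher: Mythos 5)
This statement is quoted from Hochman (\cite{hochman}*{Theorem 1.9}); the paper gives no proof of it, so I can only assess your sketch against the known argument. Your overall architecture is the right one (entropy reformulation of dimension, the self-similar convolution structure, the inverse theorem for entropy of convolutions, then extraction of a nearby root from a polynomial in $\cP_{n-1}$ that is small at $\l$). But there is a genuine gap at the junction of your second and third steps. What Hochman's inverse-theorem iteration actually yields is the qualitative statement that $-\frac{1}{n}\log\Delta_n\to\infty$, where $\Delta_n$ is the minimal gap between distinct sums $\sum_{k<n}\pm\l^k$: for every $A$ there is an (ineffective) $n_0(A)$ such that $\Delta_n\le\exp(-An)$ for $n\ge n_0(A)$. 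The argument is by contradiction and gives no control of $n_0(A)$ in terms of $A$, so your claimed rate $\Delta_n\le\exp(-cn^2)$ is not delivered by the method (and is not known). Without that stronger decay, your root-extraction step collapses: from $|Q(\l)|\le\exp(-An)$ and $\deg Q\le n-1$, the bound $|\l-\eta|\le|Q(\l)|^{1/\deg Q}$ gives only $|\l-\eta|\le\exp(-A)$, a constant, not $\exp(-An)$.

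The repair is to take a bounded root rather than an $n$-th root. For $\l\le 1-\e$, Jensen's formula (exactly as in Lemma \ref{lm:Jensen} and Proposition \ref{pr:alg-approx2} of this paper) shows a nonzero $Q\in\cP_n$ has at most $k=k(\e)$ roots of modulus below $1-\e/2$, while every other root is at distance at least $\e/2$ from $\l$; hence $|Q(\l)|\ge(\e/2)^{n}\min_j|\l-\eta_j|^{k}$ over those $k$ roots, and $|Q(\l)|\le\exp(-An)$ gives $|\l-\eta|\le\exp\bigl(-(A-\log(2/\e))n/k\bigr)$, which is $\exp(-A'n)$ with $A'$ arbitrarily large since $k$ is independent of $n$. (A separate, more delicate count of roots near $1$ is needed if one insists on uniformity up to $\l=1$, but for a fixed $\l$ as in the statement one simply takes $\e=1-\l$.) With this substitution your sketch becomes correct; as written, the quantitative claim $\exp(-cn^2)$ is unsupported and the final inequality does not follow.
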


In comparison with Theorem \ref{th:main}, Hochman's result has the advantage
that it provides an algebraic approximation of an exceptional parameter
at each scale.
On the other hand, Theorem \ref{th:main} provides a smaller error
and the information that the approximating parameter is also exceptional
(i.e. $\dim\mu_\eta<1$).

Theorem \ref{th:Hoc} also implies that the set of exceptional parameters
\[
\{\l\in(1/2,1):\dim\mu_\l<1\}
\]
is of packing dimension $0$.
Building on this result, Shmerkin \cite{shmerkin} proved that
\[
\{\l\in(1/2,1):\mu_\l\text{ is singular}\}
\]
is of Hausdorff dimension $0$.
We recall that a set of packing dimension $0$ is also a set of Hausdorff dimension $0$.

See also the very recent paper of Shmerkin \cite{Shm-Bernoulli-Lq}, where he
proves a stronger version of Hochman's result for the $L^q$-dimension of Bernoulli convolutions.
He also concludes that outside an exceptional set of Hausdorff dimension $0$ for the parameter,
Bernoulli convolutions are absolutely continuous with a density in $L^q$ for any $q<\infty$.
Moreover, his methods can establish that the density has fractional derivatives.

Theorem \ref{th:Hoc} also implies a conditional result on $\dim\mu_\l$ for transcendental
parameters.
Hochman proved that $\dim \mu_\l=1$ for all transcendental parameters $\l\in(1/2,1)$
if the answer is affirmative
to the following question posed by him \cite{hochman}*{Question 1.10}.
Is there an absolute constant $C>0$ such that
\begin{equation}\label{eq:exponential-sep}
|\eta_1-\eta_2|\ge \exp(-Cd)
\end{equation}
holds for any two different
numbers $\eta_1\neq\eta_2$
that are roots of (not necessarily the same) polynomials in $\cP_d$?
However, such a bound is not yet available;
the best known result in this direction is due to Mahler \cite{Mah-discriminant}*{Theorem 2},
who proved
\begin{equation}\label{eq:Mahler-bound}
|\eta_1-\eta_2|\ge \exp(-Cd\log d),
\end{equation} where $C$ is an absolute constant.
(See Theorem \ref{th:Mahler} below for more details.)

The work of Hochman \cite{hochman} also gives a formula for the dimension of $\mu_\l$,
if $\l$ is
an algebraic number.
Denote by $h_\l$ the entropy of the random walk on the semigroup generated by the
transformations $x\mapsto \l\cdot x+1$ and $x\mapsto \l\cdot x-1$.
More precisely, let
\[
h_\l=\lim_{n\to\infty}\frac{1}{n}H\Big(\sum_{i=0}^{n-1}\xi_i\l^i\Big)=\inf\frac{1}{n}H\Big(\sum_{i=0}^{n-1}\xi_i\l^i\Big),
\]
where $H(\cdot)$ denotes the Shannon entropy of a discrete random variable.
With this notation Hochman's formula is
\begin{equation}\label{eq:Hochman}
\dim \mu_\l=\min(-h_\l/\log\l,1).
\end{equation}
(See \cite{BV-entropy}*{Section 3.4}, where the formula is derived in this form from Hochman's main result.)

The quantity $h_\l$ has been studied in the paper \cite{BV-entropy}.
It was proved there \cite{BV-entropy}*{Theorem 5} that there is an absolute constant $c_0>0$ such that
for any algebraic number, we have
\[
c_0\cdot \min(\log M_\l,1)\le h_\l\le \min(\log M_\l,1).
\]
The $\log$'s in this formula as well as those that appear in the definition of entropy are
base $2$.
Numerical calculations reported in that paper indicate that one can take $c_0=0.44$.
This result combined with Hochman's formula implies that
$\dim\mu_\l=1$ provided $\l$ is an algebraic number with $1>\l>\min(2,M_\l)^{-c_0}$.
Here, and everywhere in the paper, we denote by $M_\l$ the Mahler measure of an algebraic number
$\l$.
That is, if $P(x)=a_d\prod(x-\l_j)$ is the minimal polynomial of $\l$ in $\Z[x]$, then by definition,
\begin{equation}\label{defMahler}
M_\l=|a_d|\prod_{j:|\l_j|>1}|\l_j|.
\end{equation}

\subsection{The strategy of the proof}\label{sc:strategy}
This section gives an informal  account of the proof
of Theorem \ref{th:main}.
All the arguments presented here will be repeated in a rigorous fashion later
in the paper.
Therefore, we take a rather relaxed approach towards our estimates.
In particular, we will write $\lessapprox$ to indicate an inequality that could be made
valid by inserting suitable constants in appropriate places.

The proof of our results builds on the techniques introduced by Hochman
in \cite{hochman} using entropy estimates.

We work with the following notion of entropy.
Let $X$ be a bounded random variable and let $r>0$ be a real number.
We define
\[
H(X;r):=\int_0^1 H(\lfloor X/r+t\rfloor)dt.
\]
On the right hand side, $H(\cdot)$ denotes the Shannon entropy of a discrete random variable.
In addition, we define the conditional entropies
\[
H(X;r_1|r_2):=H(X;r_1)-H(X;r_2).
\]
We will study the basic properties of these quantities in Section \ref{sc:entropy}. In particular $H(X;r)$  is a non-increasing function of $r$. Furthermore $0 \leq H(X;r) \leq \log r^{-1} + O(1)$, where the implied constant depends only on $\esssup |X|$.
By abuse of notation, we write $H(\mu;r_1|r_2)=H(X;r_1|r_2)$ and similar expressions
if $\mu$ denotes the law of $X$.

These quantities differ from those used by Hochman in that they involve an averaging
over a random translation.
This averaging endows these quantities with some useful properties as we will see in Section
\ref{sc:entropy-at-scale}, which often comes in handy.
The idea of this averaging procedure originates in Wang's paper \cite{Wan-quantitative}*{Section 4.1}.

We fix a number $\l\in(1/2,1)$ until the end of the section.
For a set $I\subset\R_{>0}$, we write $\mu_\l^I$
for the law of the random variable
\[
\sum_{n\in\Z:\l^n\in I}\xi_n\l^n.
\]
We note that in this notation $\mu_\l^{(0,1]}=\mu_\l$ and $\mu_\l^{(\l^n,1]}$ is the law of $\sum_{j=0}^{n-1}\xi_j\l^j$, the
first $n$ term truncation of the series defining Bernoulli convolutions.

We note that
\[
\dim\mu_\l=\lim_{n\to\infty}\frac{H(\mu_\l;\l^n)}{n\log\l^{-1}},
\]
see Lemma \ref{lm:dim-entropy},
and that $H(\mu_\l^{(\l^n,1]};\l^n)\approx H(\mu_\l;\l^n)$ up to additive constants independent of $n$.
Hence
\begin{equation}\label{eq:diment}
H(\mu_\l^{(\l^n,1]};\l^n)\approx n\log\l^{-1}\dim\mu_\l.
\end{equation}

We now assume that $\dim\mu_\l<1$
and we assume by contradiction that the algebraic approximations to $\l$
claimed in Theorem \ref{th:main} do not exist.
In the first part of the proof given in Section \ref{sc:initial},
we search for integers $n$ with the property that
\begin{equation}\label{eq:firstaim}
H(\mu^{(\l^n,1]}_\l;r)\ge n\log\l^{-1}(\dim \mu_\l+\e)
\end{equation}
for a suitable scale $r\approx n^{-Cn}$. Equation \eqref{eq:firstaim} is a small improvement over \eqref{eq:diment} when we replace $\l^n$
with the smaller scale $r$.

If $\e>0$  is small enough so that the right hand side of \eqref{eq:firstaim} is $<n$, and if \eqref{eq:firstaim} fails, then there are pairs of choices of the signs in the sum
\[
\sum_{j=0}^{n-1}\pm \l^j
\]
that give the same value within an error of $r$.
For each such pair, there corresponds a non-zero polynomial $P\in\cP_{n-1}$
such that $|2P(\l)|<r$.
In Section \ref{sc:initial}, we show that these polynomials must have a common root
$\eta$ and $|\l-\eta|<n^{-4n}$.
Since this collection of polynomials is rich enough to cause the failure of \eqref{eq:firstaim},
we obtain
\[
H(\mu_\eta^{(\eta^n,1]})\le n\log\l^{-1}(\dim \mu_\l+\e),
\]
which yields $h_\eta\le \log\l^{-1}(\dim\mu_\l+\e)$.
Plugging this into \eqref{eq:Hochman}, we get $\dim\mu_\eta\le\dim\mu_\l+\e'$,
where $\e'$ is arbitrarily close to $\e$ if $n$ is sufficiently large.
Hence $\eta\in E_{n,\dim\mu_\l+\e'}$ .

Then we choose another integer $n'$ such that $|\l-\eta|$ is just slightly larger than
${n'}^{-4n'}$.
If \eqref{eq:firstaim} fails again for $n'$ and for a suitable $r'$, then we can repeat the above
argument to find another number $\eta'\in E_{n',\dim\mu_\l+\e}$ such that $|\l-\eta'|<{n'}^{-4n'}$.
Then $|\eta-\eta'|<2{n'}^{-4n'}$, and we can conclude $\eta=\eta'$ thanks to \eqref{eq:Mahler-bound}
(the result of Mahler on the separation
between roots of polynomials in $\cP_n$).
However, we carefully chose $n'$ to make sure that $|\l-\eta'|<{n'}^{-4n'}<|\l-\eta|$, hence
we cannot have $\eta=\eta'$, which shows that
\eqref{eq:firstaim} must hold for at least one of $n$ or $n'$.

The way we exploited Mahler's bound \eqref{eq:Mahler-bound} is reminiscent to Hochman's argument
for showing $\dim\mu_\l=1$ for all transcendental $\l\in(1/2,1)$ assuming
the stronger bound \eqref{eq:exponential-sep}
discussed in the previous section.

We will use the (indirect) assumption on the lack of algebraic approximations to $\l$
to control $n'$ in terms of $n$.
Indeed, if  \eqref{eq:firstaim} fails for $n$, we get that it holds for $n'$ with
\begin{equation}\label{eq:growth}
{n'}^{4n'}\lessapprox |\l-\eta|^{-1}<\exp(n^{\log^{(3)}n}).
\end{equation}
This will enable us to produce suitably many integers $n$ in a given range such that
\eqref{eq:firstaim} holds.

In the second part of the proof, which we discuss in Section \ref{sc:convolution}, we
use the identity
\[
\mu_\l^{I_1\dot\cup\ldots\dot\cup I_k}=\mu_\l^{I_1}*\ldots*\mu_\l^{I_k}
\]
and argue that entropy increases under convolution to improve on the
bound \eqref{eq:firstaim}.
We use the following result from \cite{Var-Bernoulli-algebraic}*{Theorem 3}.

\begin{theorem}\label{th:low-entropy-convolution}
For every $0<\a\le 1/2$, there are numbers $C,c>0$ such that the following holds.
Let $\mu,\nu$ be two compactly supported probability measures on $\R$.
Let  $\s_2<\s_1<0$ and $0<\b\le 1/2$ be real numbers.
Suppose that
\[
H(\mu;2^\s|2^{\s+1})<1-\a
\]
for all $\s_2<\s<\s_1$.
Suppose further that
\[
H(\nu; 2^{\s_2}|2^{\s_1})>\b(\s_1-\s_2).
\]

Then
\[
H(\mu*\nu; 2^{\s_2}|2^{\s_1})> H(\mu;2^{\s_2}|2^{\s_1})+c\b(\log\b^{-1})^{-1}(\s_1-\s_2)-C.
\]
\end{theorem}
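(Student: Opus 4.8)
The plan is to derive this from Hochman's inverse theorem for the entropy of convolutions \cite{hochman}, which underlies all the entropy estimates in this area: if convolving $\mu$ by $\nu$ barely increases entropy over a long run of scales, then at almost every scale in that run either $\mu$ is already essentially uniform or $\nu$ is essentially concentrated. The two hypotheses here are tailored to make the first alternative unavailable and the second too rare to be consistent with the entropy of $\nu$. Write $N:=\s_1-\s_2$; after a translation and a rescaling assume $\s_1=0$, $\s_2=-N$, and put
\[
\D:=H(\mu*\nu;2^{-N}|1)-H(\mu;2^{-N}|1),
\]
the quantity to be bounded below by $c\b(\log\b^{-1})^{-1}N-C$. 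Since $H(\mu*\nu;2^{-N}|1)\ge0$ and $H(\mu;2^{-N}|1)=O(N)$, we have $\D\ge-O(N)$; so for $N$ below any fixed threshold depending on $\a$ the inequality holds by taking $C$ large (depending on $\a$), and we may assume $N$ exceeds such a threshold. We may also replace the averaged entropies $H(\,\cdot\,;2^{-k})$ by the dyadic partition entropies used by Hochman, which agree up to a constant uniformly in $k$ (one of the comparisons established in Section \ref{sc:entropy}), absorbing the error into $C$.

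Reading the hypotheses scale by scale: the bound on $\mu$ gives $H(\mu;2^{k-1}|2^k)\le1-\a$ for every $k\in\{-N+1,\dots,0\}$, so for any $\e<\a$ the alternative ``$\mu$ is $\e$-uniform at scale $k$'' never occurs, whereas the bound on $\nu$ is only the cumulative one, $\sum_k H(\nu;2^{k-1}|2^k)=H(\nu;2^{-N}|1)>\b N$, each term in $[0,1+O(1)]$. Now suppose for contradiction that $\D<\d N$, with $\d>0$ to be chosen. Hochman's inverse theorem, run with exceptional-fraction parameter $\e$, then produces at least $(1-\e)N$ scales $k$ at which $\mu$ is $\e$-uniform or $\nu$ is $\e$-concentrated, i.e.\ $H(\nu;2^{k-1}|2^k)<\e$; by the preceding, it is $\nu$ that is $\e$-concentrated at all of these. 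Summing over the $N$ scales,
\[
\b N<\sum_k H(\nu;2^{k-1}|2^k)<(1-\e)N\cdot\e+\e N\cdot(1+O(1))=O(\e)N,
\]
so $\b=O(\e)$; choosing $\e$ to be a small fixed multiple of $\min(\a,\b)$ makes this impossible provided $\d\le\d_0(\e)$, where $\d_0(\e)$ is the admissible entropy-increment threshold in the inverse theorem. Hence $\D\ge\d_0(c'\min(\a,\b))\,N$.

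The delicate part --- which I expect to be the main obstacle --- is to promote this to the asserted factor $\b(\log\b^{-1})^{-1}$, a matter entirely of how $\d_0(\e)$ degrades as $\e\to0$: one needs $\d_0(\e)\gtrsim\e(\log\e^{-1})^{-1}$, so that $\d_0(c'\min(\a,\b))\gtrsim\b(\log\b^{-1})^{-1}$ when $\b\le\a$, while for $\b\ge\a$ the constant-in-$\b$ bound $\d_0(c'\a)N$ already suffices because $\b(\log\b^{-1})^{-1}\le\tfrac12$ on $(0,\tfrac12]$. The published form of Hochman's inverse theorem does not record this dependence, so at this step one either invokes a quantitative restatement of it or --- the route I would follow --- reruns the relevant portion of its proof directly in the present averaged-entropy framework, where the logarithmic loss enters as the cost of a single dyadic pigeonhole over the $\asymp\log\e^{-1}$ consecutive scales one must examine before a definite amount of local concentration of $\nu$ becomes detectable. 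A purely technical companion is to verify that the averaged entropies behave under convolution, and under the scale comparisons used above, with $O(1)$ total errors rather than $O(1)$ per scale; the averaging in the definition of $H(\,\cdot\,;r)$ was introduced exactly to make this work, and these verifications are part of Section \ref{sc:entropy}.
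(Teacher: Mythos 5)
First, a point of comparison: the paper does not prove this theorem at all; it is imported verbatim from \cite{Var-Bernoulli-algebraic}*{Theorem 3}, where it is established by a self-contained, fully effective multiscale argument (a quantitative single-block entropy-growth estimate applied to $\asymp\b N/\log\b^{-1}$ blocks of $\asymp\log\b^{-1}$ consecutive scales, which is exactly where the factor $\b(\log\b^{-1})^{-1}$ comes from), not by appeal to Hochman's inverse theorem.

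Judged on its own terms, your argument has a genuine gap, and it is the one you flag yourself. Hochman's inverse theorem, as published, asserts only the existence of an admissible increment threshold $\d(\e,m)>0$ with no explicit dependence on $\e$; running it with $\e\asymp\min(\a,\b)$ therefore yields a gain of the form $\d_0(\min(\a,\b))(\s_1-\s_2)-C$ for an \emph{unspecified} function $\d_0$. But the entire content of the statement --- and the only reason it is usable in Section \ref{sc:convolution}, where it is applied with $\b\asymp1/\log K_j$ and the divergence of $\sum 1/(\log K_j\log\log K_j)$ is what drives the contradiction --- is the explicit rate $\b(\log\b^{-1})^{-1}$. Saying that one could ``rerun the relevant portion of the proof'' to extract $\d_0(\e)\gtrsim\e(\log\e^{-1})^{-1}$ is a restatement of what must be proved, not a proof; the pigeonhole heuristic is plausible but is precisely where all the work lies. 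Two further quantitative points are glossed over: (i) the inverse theorem only applies for $N$ exceeding a threshold $n(\e,\d,m)$, which depends on $\e$ and hence on $\b$, whereas your small-$N$ reduction absorbs errors into a constant $C$ that may depend only on $\a$; one would need $n(\e,\d,m)\lesssim\log\b^{-1}/\b$ for this to be legitimate, and no such bound is available. (ii) The conclusion of the inverse theorem concerns the components of $\nu$ being atomic at a resolution $m$ levels deep, not the single-scale global entropies $H(\nu;2^{k-1}|2^k)$; with $m=1$ the conclusion you quote is too weak, and for general $m$ the translation back to the cumulative bound on $\nu$ needs the component formula and costs further $\e$-dependent losses. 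None of these defects affects the qualitative shape of the argument, but the theorem being proved is a quantitative one, and its quantitative content is exactly what remains unestablished.
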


We note that the supremum of the values $H(\mu;r|2r)$ may take over all probability
measures $\mu$ is $1$ (see \eqref{eq:GenEntUpperbnd} below and the comment following it). We will see (in Lemma \ref{lm:dimlessthan1})
that the assumption $\dim\mu_\l<1$ implies that there is
a number $\a>0$ such that $H(\mu_\l^I;r|2r)<1-\a$ for all $r>0$ and for all $I\subset \R_{>0}$.
This means that the hypothesis of Theorem \ref{th:low-entropy-convolution}
holds for $\mu=\mu^{I}_{\l}$  for all $I\subset \R_{>0}$ with an $\a$ depending only on $\l$.

We give a brief and informal explanation on how this result will be used.
Suppose that \eqref{eq:firstaim} holds for some $n$ and $r$.
Now \eqref{eq:diment} implies
\[
H(\mu_\l^{(\l^n,1]};\l^n)<(\dim\mu_\l+\e/2)n\log \l^{-1},
\]
if $n$ is sufficiently large, so we can show that
\[
H(\mu_\l^{(\l^{n},1]};r|\l^n)\ge \e' n,
\]
for some $\e'$ depending only on $\e$ and $\l$.

For simplicity of exposition, we assume now that the stronger
bound
\[
H(\mu_\l^{(\l^{n},1]};r|r^{9/10})\ge \e' n
\]
holds.
There is no way to justify this hypothesis; in the actual proof we need to consider a suitable
decomposition of the scales between $\l^{n}$ and $r$.

Using scaling properties of entropy, we can write
\[
H(\mu_\l^{(\l^{n(j+1)},\l^{nj}]};r\l^{jn}|r^{9/10}\l^{jn})\ge \e' n.
\]
We consider this inequality for $j=0,1,\ldots, N-1$ for some $N\approx (\log r^{-1})/n$ so that
$r^{1/10}\le\l^{jn}\le 1$ for each $j$ in the range.
Hence
\[
H(\mu_\l^{(\l^{n(j+1)},\l^{nj}]};r^{11/10}|r^{9/10})\ge \e' n,
\]
because $[r^{11/10},r^{9/10}]\supset[r\l^{jn},r^{9/10}\l^{jn}]$.

We can now apply Theorem \ref{th:low-entropy-convolution} $N\approx\log n$ times
with
\[
\b\approx \frac{n}{\log r^{-1}}\approx\frac{1}{\log n},
\]
and we obtain
\begin{equation}\label{eq:secondaim}
H(\mu_\l^{(\l^{nN},1]};r^{11/10}|r^{9/10})\gtrapprox \frac{\log r^{-1}}{\log\log n},
\end{equation}
i.e. the average entropy of a digit is at least $\approx(\log\log n)^{-1}$.

Then we will apply Theorem \ref{th:low-entropy-convolution} again in a second stage.
Let $n_1,n_2,\ldots$ be a sequence of integers such that \eqref{eq:firstaim}
and hence \eqref{eq:secondaim} holds.
We apply Theorem \ref{th:low-entropy-convolution} repeatedly again with
$\b_i\approx 1/\log^{(2)}(n_i)$, and find that the average entropy of a digit
between suitable scales is at least
\[
\approx\sum\frac{1}{\log^{(2)}(n_i)\log^{(3)}(n_i)}.
\]
If $n_i$ does not grow faster than $\exp^{(2)}(i\log^{(2)}i)$, then the above sum
can be arbitrarily large contradicting the fact that the entropy of a digit
cannot exceed 1. This contradiction ends the proof. 

Note that using the argument that we presented in the beginning of this sketch, one can show that the lack of the algebraic
approximations claimed in Theorem \ref{th:main}
implies that we can find a sequence $n_i$ that satisfies our requirement \eqref{eq:firstaim}
and also satisfies the growth condition
\[
n_{i+1}^{n_{i+1}}\lessapprox\exp\big(n_i^{\log^{(3)}n_i}\big),
\]
see \eqref{eq:growth}.
We can use this to prove $n_i\lessapprox \exp^{(2)}(i\log^{(2)}i)$ by induction.

\subsection{Notation}

We denote by the letters $c$, $C$ and their indexed variants various constants
that could in principle be computed explicitly following the proofs step by step.
The value of these constants denoted by the same symbol may change between occurrences.
We keep the convention that we denote by lower case letters the constants that are
best thought of as ``small'' and by capital letters the ones that are ``large''.

We denote by $\log$ and $\exp$ the base $2$ logarithm and exponential functions
and write $\ln$ for the  logarithm in base $e$.
We denote by $\log^{(a)}$ and $\exp^{(a)}$ the $a$-fold iterates of the $\log$
and $\exp$ functions.

The letter $\l$ denotes a number in $(0,1)$.
For a bounded set $I\subset \R_{>0}$, we denote by  $\mu_\l^{I}$ the law of the random variable
\[
\sum_{n\in\Z:\l^n\in I}\xi_n\l^n,
\]
where $\xi_n$ is a sequence of independent unbiased $\pm1$ valued random variables.
In particular, we write $\mu_\l=\mu_\l^{(0,1]}$.

We denote by $\cP_d$ the set of polynomials of degree at most $d$ with coefficients
$\pm 1$ and $0$.

\subsection{The organization of this paper}

We begin by discussing some basic properties of entropy in Section \ref{sc:entropy}, which
we will rely on throughout the paper.
Section \ref{sc:initial} contains the first part of the proof of the main result focusing
on the initial entropy estimate \eqref{eq:firstaim} mentioned above.
The proof of Theorem \ref{th:main} is completed in Section \ref{sc:convolution},
where we exploit Theorem \ref{th:low-entropy-convolution} to improve
on our initial entropy estimate.

\subsection*{Acknowledgment}
We are grateful to Yann Bugeaud, Kevin Hare, Mike Hochman, Nikita Sidorov and Evgeniy Zorin for helpful
discussions.
We are also grateful to Mike Hochman for pointing out the converse of Theorem \ref{th:main}.
We thank the anonymous referee, S\'ebastien Gou\"ezel, Nicolas de Saxc\'e and Ariel Rapaport for a very careful
reading of our manuscript and for numerous comments and suggestions that greatly improved
the presentation of our paper.

\section{Preliminaries on entropy}\label{sc:entropy}

The purpose of this section is to provide some background material on entropy.

\subsection{Shannon and differential entropies}
If $X$ is a discrete random variable, we write $H(X)$ for its Shannon entropy, that is
\[
H(X)=\sum_{x\in\cX} -\P(X=x)\log\P(X=x),
\]
where $\cX$ denotes the set of values $X$ takes.
We recall that the base of $\log$ is $2$ throughout the paper.
If $X$ is an absolutely continuous random variable with density $f:\R\to\R_{\ge0}$, we write
$H(X)$ for its differential entropy, that is
\[
H(X)=\int -f(x)\log f(x) dx.
\]
This dual use for $H(\cdot)$ should cause no confusion, as the type
of the random variable will always be clear from the context.
If $\mu$ is a probability measure, we write $H(\mu)=H(X)$, where $X$ is a random variable with law
$\mu$.

Shannon entropy is always non-negative.
Differential entropy on the other hand can take negative values.
For example, if $a \in\R_{>0}$, and $X$ is a random variable
with finite differential entropy, then it follows from the change of variables formula that
\begin{equation}\label{change}
H(aX) = H(X) + \log a,
\end{equation}
which can take negative values when $a$ varies.
On the other hand, if $X$ takes countably many values, the Shannon entropy of $aX$ is the same as that of $X$.
Note that both entropies are invariant under translation by a constant in $\R$.

We define $F(x):=-x\log (x)$ for $x>0$ and recall that
$F$ is concave.
{}From the concavity of $F$ and Jensen's inequality,
we see that for any discrete random variable $X$ taking at most $N$ different values,
\begin{equation}\label{support}
H(X) \leq \log N.
\end{equation}

Let $X$ and $Y$ be two discrete random variables.
We define the conditional entropy of $X$ relative to $Y$ as
\begin{align*}
H(X|Y)=&\sum_{y\in\cY} \P(Y=y) H(X|Y=y)\\
=&\sum_{y\in\cY} \P(Y=y) \sum_{x\in\cX} -\frac{\P(X=x,Y=y)}{\P(Y=y)}\log\frac{\P(X=x,Y=y)}{\P(Y=y)}.
\end{align*}
We recall some well-known properties.
We always have $0\le H(X|Y)\le H(X)$, and $H(X|Y)=H(X)$ if and only if the two random variables are
independent (see \cite{cover-thomas}*{Theorem 2.6.5}).

We recall the following result from \cite{madiman}*{Theorem I}.

\begin{proposition}[Submodularity inequality]
\label{ruzsa}
Assume that $X,Y,Z$ are three independent $\R$-valued random variables
such that the distributions of $Y$, $X+Y$, $Y+Z$ and $X+Y+Z$ are absolutely continuous with respect
to Lebesgue measure and have finite differential entropy.
Then
\begin{equation}\label{ruzsaineq}
H(X+Y+Z) + H(Y) \leq H(X+Y) + H(Y+Z).
\end{equation}
\end{proposition}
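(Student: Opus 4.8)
The strategy is to deduce the inequality from the non‑negativity of a conditional mutual information, after rewriting both sides using the translation invariance of differential entropy. The reason for this slightly indirect route is that the hypotheses supply absolute continuity and finiteness of differential entropy only for $Y$, $X+Y$, $Y+Z$ and $X+Y+Z$; the individual variables $X$, $Z$ and the sum $X+Z$ need not be absolutely continuous, so the whole argument must be arranged so that only those four quantities ever appear with a differential entropy attached.

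Set $S:=X+Y+Z$. First I would record three conditional‑entropy identities. Since $X$, $Y$, $Z$ are independent, conditionally on $X=x$ the law of $S$ is the law of $x+(Y+Z)$, so by translation invariance of differential entropy $H(S\mid X=x)=H(Y+Z)$ for every $x$, hence $H(S\mid X)=H(Y+Z)$; likewise $H(S\mid Z)=H(X+Y)$, and conditioning on both variables gives $H(S\mid X,Z)=H(Y)$. By hypothesis these three right‑hand sides, as well as $H(S)$ itself, are finite. Consequently the mutual informations $I(X;S)=H(S)-H(S\mid X)$ and $I(X;S\mid Z)=H(S\mid Z)-H(S\mid X,Z)$ are well defined and finite, and the assertion to be proved, $H(S)+H(Y)\le H(X+Y)+H(Y+Z)$, is literally the statement $H(S)-H(S\mid X)\le H(S\mid Z)-H(S\mid X,Z)$, that is, $I(X;S)\le I(X;S\mid Z)$.

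To obtain this last inequality I would invoke the chain rule for mutual information, which holds for arbitrary random variables, in its two forms
\[
I\big(X;(S,Z)\big)=I(X;S)+I(X;Z\mid S)=I(X;Z)+I(X;S\mid Z).
\]
Since $X$ and $Z$ are independent, $I(X;Z)=0$, and conditional mutual information is always non‑negative, so $I(X;Z\mid S)\ge 0$; subtracting (all four quantities being finite by the previous step) yields $I(X;S\mid Z)-I(X;S)=I(X;Z\mid S)\ge 0$, which is exactly what was needed.

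I expect the only genuine obstacle to be the bookkeeping forced by the weak regularity hypotheses: one must be careful never to write $H(X)$, $H(Z)$, $H(X+Z)$, or any joint entropy such as $H(X,S)$, all of which may be undefined or infinite, and one must verify that $I(X;S)=H(S)-H(S\mid X)$ genuinely holds as an equality of finite real numbers — which is precisely guaranteed by the finiteness of $H(S\mid X)=H(Y+Z)$ established above. Apart from that, the proof uses only standard facts about entropy: translation invariance of differential entropy, the chain rule for mutual information, and the non‑negativity of (conditional) mutual information.
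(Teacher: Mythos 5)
Your proof is correct, and it follows the same route as the proof the paper defers to (\cite{BV-entropy}*{Theorem 7}, going back to Madiman): rewrite the inequality as a comparison of mutual informations using $H(S\mid X)=H(Y+Z)$, $H(S\mid Z)=H(X+Y)$, $H(S\mid X,Z)=H(Y)$, and conclude from non-negativity of conditional mutual information — your chain-rule step $I(X;S)\le I(X;S\mid Z)$ is just the data-processing inequality for the chain $X\to X+Y\to X+Y+Z$ in a slightly different packaging. You also correctly identify and handle the one delicate point, namely that only the four hypothesized sums may be assigned differential entropies, so the argument must stay within mutual informations that are finite by those hypotheses.
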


This result goes back in some form at least to a
paper by Kaimanovich and Vershik \cite{kaimanovich-vershik}*{Proposition 1.3}.
The version in that paper assumes that the laws of $X$, $Y$ and $Z$ are identical.
The inequality was rediscovered by Madiman \cite{madiman}*{Theorem I} in the
greater generality stated above.
Then it was recast in the context of entropy analogues of sumset
estimates from additive combinatorics by  Tao \cite{tao}
and Kontoyannis and Madiman \cite{kontoyannis-madiman}.
And indeed Proposition \ref{ruzsa} can be seen as an entropy analogue
of the Pl\"unnecke--Ruzsa inequality in additive combinatorics.
For the proof of this exact formulation see \cite{BV-entropy}*{Theorem 7}.

\subsection{Entropy at a given scale}\label{sc:entropy-at-scale}

We recall the notation
\[
H(X;r)=\int_{0}^1 H(\lfloor X/r+t\rfloor) dt
\]
and
\[
H(X;r_1|r_2)=H(X;r_1)-H(X;r_2).
\]

These quantities originate in the work of Wang \cite{Wan-quantitative}, and they also
play an important role in the paper \cite{LV-entropy-sum-product}, where a
quantitative version of Bourgain's sum-product
theorem is proved.

We continue by recording some useful facts about these notions.
If $N$ is an integer then we have the following interpretation, which follows easily from the definition.
\begin{equation}\label{equation:interpret2}
H(X;N^{-1}r|r)=\int_0^1 H(\lfloor N(r^{-1}X+t)\rfloor|\lfloor r^{-1} X+t\rfloor).
\end{equation}
Indeed, $\lfloor r^{-1} X+t\rfloor$ is a function of $\lfloor N(r^{-1}X+t)\rfloor$, hence
\[
H(\lfloor N(r^{-1}X+t)\rfloor|\lfloor r^{-1} X+t\rfloor)
=H(\lfloor N(r^{-1}X+t)\rfloor)-H(\lfloor r^{-1} X+t\rfloor).
\]
Combining this interpretation with \eqref{support} we see that
\begin{equation}\label{eq:GenEntUpperbnd}
H(\mu;r|2r)\le 1
\end{equation}
for any probability measure $\mu$.
This upper bound is best possible, as demonstrated by the uniform measures
on long intervals.

It is immediate from the definitions that we have the scaling formulae
\[
H(sX;sr)=H(X;r),\qquad
H(sX;sr_1|sr_2)=H(X;r_1|r_2),
\]
for any random variable $X$ and real numbers $s,r,r_1,r_2>0$.
In particular, we have
\begin{equation}\label{eq:scaling}
H(\mu_\l^{\l^k I};\l^kr_1|\l^kr_2)=H(\mu_\l^I;r_1|r_2),
\end{equation}
for any integer $k$, real numbers $r_1,r_2>0$ and $I\subset\R_{>0}$.

The next lemma gives an alternative definition for entropy at a given scale.
\begin{lemma}[\cite{Var-Bernoulli-algebraic}*{Lemma 5}]\label{lm:second-def}
Let $X$ be a bounded random variable in $\R$.
Then
\[
H(X;r)=H(X+I_r)-H(I_r)=H(X+I_r)-\log(r).
\]
where $I_r$ is a uniform random variable in $[0,r]$ independent of $X$.
\end{lemma}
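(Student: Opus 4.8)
The plan is to verify the identity directly from the definition $H(X;r)=\int_0^1 H(\lfloor X/r+t\rfloor)\,dt$. First I would dispose of the second equality: if $I_r$ is uniform on $[0,r]$ then its density is $r^{-1}\mathbf{1}_{[0,r]}$, so its differential entropy is $H(I_r)=-\int_0^r r^{-1}\log(r^{-1})\,dx=\log r$. For the first equality, I would reduce to the case $r=1$. By the scaling formula $H(sX;sr_1|sr_2)=H(X;r_1|r_2)$ applied with $s=r^{-1}$ (and its analogue $H(sX;sr)=H(X;r)$), one has $H(X;r)=H(r^{-1}X;1)$. Writing $I_r=rU$ with $U$ uniform on $[0,1]$ independent of $X$, the random variable $r^{-1}X+U$ has a bounded, compactly supported density, hence finite differential entropy, so \eqref{change} gives $H(X+I_r)=H\big(r(r^{-1}X+U)\big)=H(r^{-1}X+U)+\log r$. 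Thus the claimed identity is equivalent to $H(Y+U)=\int_0^1 H(\lfloor Y+t\rfloor)\,dt$ for an arbitrary bounded random variable $Y$ (take $Y=r^{-1}X$), and it is this that I would prove.

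Next I would write both sides in terms of the law $\mu$ of $Y$. On one hand, $Y+U$ has density $g(y)=\mu([y-1,y])$, the convolution of $\mu$ with $\mathbf{1}_{[0,1]}$; it is supported on a bounded set and satisfies $0\le g\le 1$, so $H(Y+U)=\int_\R F(g(y))\,dy$ is a convergent integral, $F$ being nonnegative on $[0,1]$. On the other hand, for fixed $t\in[0,1)$ the variable $\lfloor Y+t\rfloor$ takes the integer value $n$ with probability $\mu([n-t,n+1-t))$, so $H(\lfloor Y+t\rfloor)=\sum_{n\in\Z}F\big(\mu([n-t,n+1-t))\big)$, a finite sum since $Y$ is bounded.

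Then I would integrate over $t\in[0,1]$ and interchange the (nonnegative) sum and integral by Tonelli, obtaining $\int_0^1 H(\lfloor Y+t\rfloor)\,dt=\sum_{n\in\Z}\int_0^1 F\big(\mu([n-t,n+1-t))\big)\,dt$. In the $n$-th integral the substitution $y=n+1-t$ turns it into $\int_n^{n+1}F\big(\mu([y-1,y))\big)\,dy$, and summing over $n$ assembles these ranges into $\int_\R F\big(\mu([y-1,y))\big)\,dy$. Since $\mu$ has at most countably many atoms, $\mu([y-1,y))=\mu([y-1,y])=g(y)$ for almost every $y$, so this equals $\int_\R F(g(y))\,dy=H(Y+U)$, which finishes the proof.

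The computation is elementary, so there is no deep obstacle; the points that require a little care are the reduction to $r=1$ (keeping track of the $\log r$ terms produced by \eqref{change} and verifying that the relevant variable has finite differential entropy), the convergence of all the sums and integrals — which is exactly where boundedness of $X$ enters — and the harmless replacement of half-open intervals by closed ones, valid off a countable (hence null) set. I expect the bookkeeping of the scaling step to be the most error-prone part, but it is routine.
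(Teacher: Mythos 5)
Your proof is correct. The paper itself does not prove this lemma --- it imports it from \cite{Var-Bernoulli-algebraic}*{Lemma 5} --- but your argument is the natural direct verification and all the steps check out: the computation $H(I_r)=\log r$, the reduction to $r=1$ via the scaling identities and \eqref{change} (no circularity, since $H(sX;sr)=H(X;r)$ follows immediately from the definition), the identification of the density of $Y+U$ as $g(y)=\mu([y-1,y])$, the Tonelli interchange of the nonnegative sum and integral, the substitution reassembling the integrals over $[n,n+1]$ into $\int_{\R}F(\mu([y-1,y)))\,dy$, and the a.e.\ replacement of half-open by closed intervals. The boundedness of $X$ is used exactly where needed (finiteness of the sums and integrals and of the differential entropy), so there is no gap.
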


It follows from the definition that being an average of Shannon entropies
$H(X;r)$ is always non-negative.
Similarly, we see from \eqref{equation:interpret2} that $H(X;r_1|r_2)$ is also non-negative
if $r_2/r_1$ is an integer.
We will see below that this holds also for any $r_2\ge r_1$.

The next lemma shows that conditional entropy between scales of integral ratio
cannot decrease by taking convolution of measures.

\begin{lemma}[\cite{Var-Bernoulli-algebraic}*{Lemma 6}]\label{lm:entropy-conv-nondecrease}
Let $X$ and $Y$ be two bounded independent random variables in $\R$.
Let $r_2>r_1>0$ be two numbers such that $r_2/r_1\in\Z$.
Then
\[
H(X+Y;r_1|r_2)\ge H(X;r_1|r_2).
\]
\end{lemma}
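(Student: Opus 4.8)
The statement to prove is Lemma \ref{lm:entropy-conv-nondecrease}: for independent bounded $X,Y$ and $r_2>r_1>0$ with $r_2/r_1\in\Z$,
\[
H(X+Y;r_1|r_2)\ge H(X;r_1|r_2).
\]
Write $N=r_2/r_1$, so $N$ is a positive integer. The natural approach is to use the interpretation \eqref{equation:interpret2}, which rewrites the conditional entropy as a genuine Shannon conditional entropy, and then to exploit the fact that Shannon conditional entropy only decreases under a deterministic ``quotient'' map and under independent mixing. First I would fix the translation parameter $t\in[0,1)$ and set $U=\lfloor r_2^{-1}(X+Y)+t\rfloor$ and $V=\lfloor r_1^{-1}(X+Y)+t\rfloor$; by \eqref{equation:interpret2}, $H(X+Y;r_1|r_2)=\int_0^1 H(V\,|\,U)\,dt$, and likewise $H(X;r_1|r_2)=\int_0^1 H(V'\,|\,U')\,dt$ with the analogous quantities built from $X$ alone. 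So it suffices to prove the pointwise (in $t$) inequality $H(V\,|\,U)\ge H(V'\,|\,U')$ after integrating — and in fact I expect the cleanest route is to prove it for each $t$.

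The key step is to realize the ``add $Y$'' operation as conditioning on extra randomness. Condition everything additionally on $Y$: since $X$ and $Y$ are independent, for each fixed value $y$ of $Y$ the pair $(U,V)$ conditioned on $Y=y$ has the same conditional-entropy structure as the pair built from the translated variable $X+y$, i.e. $H(V\,|\,U,Y=y)=H\big(\lfloor r_1^{-1}(X+y)+t\rfloor \,\big|\, \lfloor r_2^{-1}(X+y)+t\rfloor\big)$. A translation by the real constant $y$ is absorbed into the free translation parameter $t$: after averaging over $t\in[0,1)$ this contributes exactly the same as averaging $X$ alone over $t$ (this is precisely the point of the random-translation averaging, and it is why the Wang-style definition is used). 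Hence $\int_0^1 H(V\,|\,U,Y)\,dt = \int_0^1 H(V'\,|\,U')\,dt = H(X;r_1|r_2)$. On the other hand, conditioning reduces Shannon entropy: $H(V\,|\,U,Y)\le H(V\,|\,U)$ for every $t$ (using the standard fact $0\le H(\cdot\,|\,\cdot,\cdot)\le H(\cdot\,|\,\cdot)$ recalled in the text). Integrating over $t$ and combining the two facts yields $H(X+Y;r_1|r_2)=\int_0^1 H(V\,|\,U)\,dt\ge \int_0^1 H(V\,|\,U,Y)\,dt = H(X;r_1|r_2)$, which is the claim. One small point to handle: $Y$ need not be discrete, so ``conditioning on $Y$'' should be phrased as a disintegration and the inequality $H(V\,|\,U,Y)\le H(V\,|\,U)$ as the statement that $H(V\,|\,U)\ge \int H(V_y\,|\,U_y)\,d\mathrm{law}(y)$, which follows from concavity of Shannon entropy together with the fact that the law of $(U,V)$ is the mixture over $y$ of the laws of $(U_y,V_y)$ — concavity of entropy under mixing is exactly what gives $H(V\,|\,U)\ge \E_Y H(V\,|\,U,Y)$.

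**Main obstacle.** The only delicate point is the interchange of the two averagings — over the translation $t$ and over the auxiliary variable $Y$ — and making precise that translating $X$ by the constant $y$ is invisible after the $t$-average. Concretely, I need Fubini to swap $\int_0^1 dt$ with $\int d\,\mathrm{law}(y)$ (legitimate since all integrands are nonnegative and bounded, $X,Y$ being bounded so only finitely many values of the floors occur), and I need the identity $\int_0^1 H\big(\lfloor r_1^{-1}(X+y)+t\rfloor \,\big|\, \lfloor r_2^{-1}(X+y)+t\rfloor\big)\,dt = H(X;r_1|r_2)$ for every real $y$, which is just the translation-invariance of $H(\,\cdot\,;r_1|r_2)$ restated through \eqref{equation:interpret2} — a one-line change of variables $t\mapsto t+\{r_2^{-1}y\}$-type manipulation, together with periodicity of the integrand. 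Everything else is the standard entropy toolkit (concavity, conditioning reduces entropy) already quoted in Section \ref{sc:entropy}. I would present the argument in the order: (i) reduce to the pointwise-in-$t$ Shannon picture via \eqref{equation:interpret2}; (ii) disintegrate over $Y$ and use translation-invariance to identify $\E_Y$ of the conditioned quantity with $H(X;r_1|r_2)$; (iii) apply concavity/conditioning to get the inequality and conclude.
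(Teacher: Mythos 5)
Your proof is correct, but it takes a genuinely different route from the one the paper relies on. The paper itself does not prove this lemma: it cites \cite{Var-Bernoulli-algebraic}*{Lemma 6}, and the proof there uses exactly the toolkit assembled in Section \ref{sc:entropy} for this purpose, namely the smoothed differential-entropy characterization of Lemma \ref{lm:second-def} together with the submodularity inequality (Proposition \ref{ruzsa}). Concretely, one writes
\[
H(X+Y;r_1|r_2)-H(X;r_1|r_2)=\bigl(H(X+Y+I_{r_1})-H(X+Y+I_{r_2})\bigr)-\bigl(H(X+I_{r_1})-H(X+I_{r_2})\bigr),
\]
notes that $I_{r_2}$ has the law of $I_{r_1}+D$ with $D$ uniform on $\{0,r_1,\dots,(N-1)r_1\}$ and independent of everything (this is where $N=r_2/r_1\in\Z$ enters), and applies \eqref{ruzsaineq} to the independent triple $Y$, $X+I_{r_1}$, $D$ to get $H(X+Y+I_{r_2})+H(X+I_{r_1})\le H(X+Y+I_{r_1})+H(X+I_{r_2})$, which is the claim. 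Your argument instead stays entirely in the Shannon picture: you use \eqref{equation:interpret2} (where integrality of $N$ enters for you, since the coarse digit must be a function of the fine digit), disintegrate over $Y$, invoke $I(V;Y\mid U)\ge 0$ in its mixture/concavity form, and use translation invariance of $H(\cdot\,;r)$ to absorb the shift by $y$ into the $t$-average; the Fubini justification you give is adequate since the integrand is nonnegative, bounded, and supported on finitely many digit values. The trade-off: the submodularity route is shorter given the toolkit and is the same mechanism reused quantitatively elsewhere (e.g.\ in Lemma \ref{lm:moredigits}), whereas yours is more elementary and self-contained (no differential entropy, no Proposition \ref{ruzsa}) and makes transparent why the random translation in the definition of $H(X;r)$ is exactly what is needed. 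One small imprecision to fix when writing it up: your displayed $V$ should be $\lfloor N(r_2^{-1}(X+Y)+t)\rfloor=\lfloor r_1^{-1}(X+Y)+Nt\rfloor$ rather than $\lfloor r_1^{-1}(X+Y)+t\rfloor$; with the latter the coarse digit is not a function of the fine one and the identity $\int_0^1 H(V\mid U)\,dt=H(X+Y;r_1|r_2)$ would fail, but since you explicitly invoke \eqref{equation:interpret2} this is clearly only a notational slip and not a gap.
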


We record an instance of this lemma that is of particular importance to us.
We have
\begin{equation}\label{eq:subset}
H(\mu^{I_1}_\l;r_1|r_2)\ge H(\mu^{I_2}_\l;r_1|r_2)
\end{equation}
for any $I_2\subset I_1\subset \R_{>0}$ provided the ratio of the scales $r_2/r_1$
is an integer.
Unfortunately, this may fail if the ratio of the scales in non-integral, but we always have
the following.
If $r_2/r_1\ge 2$, we can find $r_1\le t_1\le t_2\le r_2$
such that $t_2/t_1$ is an integer and
\[
H(\mu^{I_2}_\l;t_1|t_2)\ge H(\mu^{I_2}_\l;r_1|r_2)/2.
\]
We combine this with \eqref{eq:subset} and \eqref{eq:subscale} (see below) and get
\begin{equation}\label{eq:subset-noninteger}
H(\mu^{I_1}_\l;r_1|r_2)\ge H(\mu^{I_1}_\l;t_1|t_2)\ge H(\mu^{I_2}_\l;t_1|t_2)
\ge H(\mu^{I_2}_\l;r_1|r_2)/2.
\end{equation}
(It is possible to prove a variant of this with a small additive error term
instead of the multiplicative constant, see \cite{Var-Bernoulli-algebraic}*{Lemma 9}.
However, for the purposes of this paper \eqref{eq:subset-noninteger} is more convenient.)

We recall a result form \cite{LV-entropy-sum-product} (see also  \cite{Var-Bernoulli-algebraic}*{Lemma 8}),
which establishes that
$H(X;r)$ is a monotone increasing and Lipschitz
function of $-\log r$; in particular $H(X;r_1|r_2)$ is nonnegative for all $r_1\le r_2$.

\begin{lemma}\label{lemma:Lipschitz}
Let $X$ be a bounded random variable in $\R$.
Then for any $r_2\ge r_1>0$ we have
\[
0\le H(X;r_1)-H(X;r_2)\le 2(\log r_2-\log r_1).
\]
\end{lemma}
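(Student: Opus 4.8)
The plan is to pass to the alternative description of entropy at a scale given by Lemma \ref{lm:second-def} and to study the resulting differential entropy as a function of the scale.

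First I would write $H(X;r)=H(X+I_r)-\log r$, with $I_r$ uniform on $[0,r]$ independent of $X$, and set $g(r):=H(X+I_r)$. Then
\[
H(X;r_1)-H(X;r_2)=\bigl(g(r_1)-g(r_2)\bigr)+\bigl(\log r_2-\log r_1\bigr),
\]
so it suffices to prove that $g$ is non-decreasing and satisfies $g(r_2)-g(r_1)\le\log(r_2/r_1)$ for $r_1\le r_2$, i.e.\ that $g$ has slope in $[0,1]$ with respect to $\log r$; already this yields the lemma with $1$ in place of $2$. I would also record at the outset that $X+I_r$ has a density bounded by $1/r$ and supported on a fixed bounded interval, that this density varies continuously in $L^1$ with $r$, and hence — since $t\mapsto-t\log t$ is bounded on bounded intervals — that $g$ is continuous on $(0,\infty)$ by dominated convergence.

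The case of an integer ratio $k=r_2/r_1$ is clean and I would treat it first, using the decomposition $I_{r_2}\overset{d}{=}I_{r_1}+r_1V$ with $V$ uniform on $\{0,\dots,k-1\}$ and independent of everything. For the upper bound, conditioning on $V$ and using \eqref{change} (a translate has the same differential entropy) gives $g(r_2)=H\bigl((X+I_{r_1})+r_1V\bigr)\le H(X+I_{r_1})+H(r_1V)=g(r_1)+\log k$. For the lower bound, the density of $X+I_{r_2}$ is $\frac1k\sum_{j=0}^{k-1}h(\cdot-jr_1)$, where $h$ is the density of $X+I_{r_1}$, so applying Jensen's inequality pointwise to the concave map $t\mapsto-t\log t$ and integrating gives $g(r_2)\ge\frac1k\sum_j H(X+I_{r_1})=g(r_1)$. (Alternatively the lower bound follows from the submodularity inequality, Proposition \ref{ruzsa}, applied to the independent triple $(X,I_{r_1},r_1V)$, whose relevant partial sums are all absolutely continuous with finite differential entropy.) This already gives monotonicity of $H(X;r)$ and nonnegativity of $H(X;r_1|r_2)$ along integer ratios, as in \eqref{equation:interpret2}.

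The remaining point — and the one I expect to be the main obstacle — is to pass from integer to arbitrary ratios, since for a general ratio $I_{r_2}$ is no longer a convolution of $I_{r_1}$ with an independent variable and the elementary manipulations above break down. My plan here is to reduce first, by approximating $X$ by $X+I_\delta$ and letting $\delta\to0$ (using continuity of $g$ and that the bounds below are uniform in $\delta$), to the case where $X$ has a bounded, compactly supported density $f$; then $h_r:=f*\tfrac1r\mathbf 1_{[0,r]}$ is differentiable in $r$ with $\partial_r h_r=\tfrac1r\bigl(f(\cdot-r)-h_r\bigr)$, and a short computation gives
\[
\frac{d}{d\log r}\,g(r)=\ln 2\cdot\bigl(q(r)-g(r)\bigr),\qquad q(r):=\mathbb E_X\bigl[-\log h_r(X+r)\bigr].
\]
The inequality $q(r)\ge g(r)$ is the statement that cross-entropy dominates entropy for the densities $f(\cdot-r)$ and $h_r$ (nonnegativity of relative entropy), giving $\frac{d}{d\log r}g\ge0$; the complementary bound $q(r)-g(r)\le\log e$ comes from estimating the contribution of the boundary ``ramp'' of $h_r$ near the right endpoint of the sampled interval of length $r$, giving $\frac{d}{d\log r}g\le1$. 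Integrating these two estimates from $r_1$ to $r_2$ and substituting into the displayed identity completes the proof. This is essentially the argument of \cite{LV-entropy-sum-product}; see also \cite{Var-Bernoulli-algebraic}*{Lemma 8}, and the slightly generous constant $2$ in the statement absorbs the bookkeeping in the approximation and the boundary estimate.
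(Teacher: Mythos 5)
The paper does not actually prove this lemma — it quotes it from \cite{LV-entropy-sum-product} and \cite{Var-Bernoulli-algebraic}*{Lemma 8} — so I can only assess your argument on its own terms. Your reduction via Lemma \ref{lm:second-def} and your integer-ratio case are fine, but the general case, which you correctly identify as the main obstacle, contains a fatal error. The inequality $q(r)\ge g(r)$ is not an instance of nonnegativity of relative entropy: $D\bigl(f(\cdot-r)\,\|\,h_r\bigr)\ge 0$ gives $q(r)=-\int f(x-r)\log h_r(x)\,dx\ge H(f)$, i.e.\ a lower bound by the entropy of $f$, not by $g(r)=H(h_r)$; the inequality $-\int p\log q\ge-\int q\log q$ is simply false for general densities $p,q$. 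Worse, the conclusion itself is false: $g(r)=H(X+I_r)$ is \emph{not} monotone in $r$, so the strengthening you claim (``slope in $[0,1]$, hence the lemma with $1$ in place of $2$'') cannot be proved by any argument. Take $X$ uniform on $\{0,1,2,3\}$ (or narrow bumps at these points if you want an absolutely continuous example). From the definition, $H(X;1)=H(X)=2$, while for small $\e>0$ one computes $H(X;1+\e)=2-\tfrac32\cdot\tfrac{\e}{1+\e}$, because for a proportion $3\e/(1+\e)$ of the shifts $t$ exactly two of the four atoms fall into a common cell of the $(1+\e)$-grid, leaving entropy $3/2$. Hence $H(X;1)-H(X;1+\e)=\tfrac{3\e}{2(1+\e)}>\log(1+\e)=\e\log e+O(\e^2)$, i.e.\ $g(1+\e)<g(1)$ and $q(1)<g(1)$. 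The constant $2$ in the statement is therefore not ``generous bookkeeping'' but essential; the correct pointwise bounds are $-\log e\le q(r)-g(r)\le\log e$, i.e.\ slope of $g$ in $[-1,1]$ with respect to $\log r$.

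The other half of your derivative estimate, $q(r)-g(r)\le\log e$ (which is what yields $H(X;r_1)\ge H(X;r_2)$), is also only asserted: ``estimating the contribution of the boundary ramp'' is not a proof, and together with the matching (and, as above, corrected) lower bound it constitutes the entire content of the lemma beyond the integer-ratio case. To see what actually has to be shown, set $\phi(s)=-\int f(x-s)\log h_r(x)\,dx$; then $g(r)=\tfrac1r\int_0^r\phi(s)\,ds$ by Fubini and $q(r)=\phi(r)$, so the required statement is that the endpoint value of $\phi$ deviates from its average over $[0,r]$ by at most $\log e$ in either direction. Neither direction follows from the tools you invoke, and the example above shows the deviation genuinely takes both signs. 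You need a different mechanism here — for instance comparing the partitions at scales $r_1$ and $r_2$ directly and using the averaging over translates to control the expected number of $r_1$-cells meeting a given $r_2$-cell — rather than a convexity or relative-entropy argument.
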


This lemma implies that $H(\mu;r_1|r_2)\ge 0$, whenever $r_2\ge r_1$.
Moreover, we have
\begin{equation}\label{eq:subscale}
H(\mu;r_1|r_2)\ge H(\mu;s_1|s_2)
\end{equation}
provided $(s_1,s_2)\subset(r_1,r_2)$.

\subsection{Bernoulli convolutions of dimension less than 1}\label{sc:dimlessthan1}

The purpose of this section is to show that the entropy of a single digit
for a Bernoulli convolution that is of dimension less than $1$ is bounded away from $1$.
This implies that Theorem \ref{th:low-entropy-convolution} applies to
$\mu=\mu^I_\l$ for any $I$ between any scales
if $\dim\mu_\l<1$ with an $\a$ depending only on $\l$.

\begin{lemma}\label{lm:dimlessthan1}
Let $\l\in(1/2,1)$ be such that $\dim\mu_\l<1$.
Then there is a number $\a>0$ such that
\[
H(\mu_\l;r|2r)<1-\a
\]
for all $r>0$.
\end{lemma}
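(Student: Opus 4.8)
The plan is to reduce the statement to a local statement at a fixed scale and to exploit the fact that the Bernoulli convolution is exact dimensional with $\dim\mu_\l<1$. First I would recall that, by Lemma \ref{lm:dim-entropy} and the definition \eqref{eq:dim-def}, the exact dimensionality of $\mu_\l$ means that
\[
\lim_{n\to\infty}\frac{H(\mu_\l;\l^n)}{n\log\l^{-1}}=\dim\mu_\l<1,
\]
so for large $n$ we have $H(\mu_\l;\l^n)\le (\dim\mu_\l+\e_0)\,n\log\l^{-1}$ with $\dim\mu_\l+\e_0<1$. Writing $H(\mu_\l;\l^n)$ as a telescoping sum of the conditional entropies $H(\mu_\l;\l^{k-1}r_0\mid \l^k r_0)$ over a dyadic-type partition of the scales between $1$ and $\l^n$, one sees that the \emph{average} of the one-scale conditional entropies $H(\mu_\l;r\mid 2r)$ over a long range of scales is bounded by something strictly less than $1$. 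This already shows the conclusion holds ``on average'', and the point is to upgrade this to a bound valid for \emph{every} individual scale $r$.

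The key tool for the upgrade is the self-similarity of $\mu_\l$. Using the decomposition $\mu_\l=\mu_\l^{(\l^k,1]}*\mu_\l^{(0,\l^k]}$ together with $\mu_\l^{(0,\l^k]}=$ (a scaled copy of $\mu_\l$), i.e. $\mu_\l^{(0,\l^k]}$ is the law of $\l^k$ times a random variable with law $\mu_\l$, and the scaling identity \eqref{eq:scaling}, I would relate $H(\mu_\l;r\mid 2r)$ at an arbitrary small scale $r$ to $H(\mu_\l; \l^{-k}r\mid 2\l^{-k}r)$ for a suitable $k$, at the cost of a controlled additive error coming from convolution with the truncation $\mu_\l^{(\l^k,1]}$. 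Concretely, choose $k$ so that $\l^{-k}r$ lands in a fixed compact range of scales, say $[\l,1]$; then by Lemma \ref{lm:entropy-conv-nondecrease} (or rather its consequences \eqref{eq:subset}--\eqref{eq:subset-noninteger}) and the monotonicity Lemma \ref{lemma:Lipschitz}, the one-scale entropy at scale $r$ is controlled, up to bounded multiplicative/additive constants, by the one-scale entropy of $\mu_\l$ at the fixed scale $\l^{-k}r$. Since there are only finitely many ``shapes'' of $\mu_\l$ truncated to scale $[\l^{-k}r, 1]$ up to rescaling and these interpolate continuously, I would argue that $\sup_{s\in[\l,1]}H(\mu_\l; s\mid 2s)$, and more importantly the averaged quantity that controls it, stays uniformly below $1$ precisely because $\dim\mu_\l<1$.

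A cleaner route, which I would actually follow to avoid the continuity-of-truncations subtlety, is this: fix a large integer $m$. For any $r>0$, by \eqref{eq:subset-noninteger} and \eqref{eq:subscale} we can bound $H(\mu_\l;r\mid 2r)$ by $2\big/ (m)$ times a sum of $m$-many consecutive one-scale conditional entropies of $\mu_\l^I$ for a suitable interval $I$ — but by \eqref{eq:subset} restricting the interval only increases entropy, so we instead bound $H(\mu_\l;r\mid 2r)\le 2\cdot m^{-1}\, H(\mu_\l^{J}; r\mid 2^m r)$ for an interval $J$, and then use $H(\mu_\l^J;\cdot)\le H(\mu_\l;\cdot)$ up to $O(1)$ (from Lemma \ref{lm:second-def} and the convolution structure) together with the averaged-dimension bound from the first paragraph to get $H(\mu_\l^{J};r\mid 2^m r)\le (\dim\mu_\l+\e_0)\,m+C$. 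Taking $m$ large so that $2(\dim\mu_\l+\e_0)+2C/m<2-\delta$ for some $\delta>0$ forces $H(\mu_\l;r\mid 2r)\le 1-\a$ with $\a=\delta/2$, uniformly in $r$. The main obstacle is making rigorous the passage from the global (averaged) entropy bound to a bound valid at every single scale; this is exactly where self-similarity and the multiplicative comparison \eqref{eq:subset-noninteger} must be combined carefully, and where the additive error terms from convolving with truncations must be absorbed by taking $m$ large.
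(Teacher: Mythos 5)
Your first paragraph is correct and matches the paper's starting point: exact dimensionality plus Lemma \ref{lm:dim-entropy} give that the \emph{average} of the one-scale conditional entropies over a long range of scales is at most $1-\a_0$ for some $\a_0>0$. You also correctly identify that the whole difficulty is upgrading this averaged bound to a bound at every individual scale. But the mechanism you propose for the upgrade does not work. The inequality $H(\mu_\l;r\mid 2r)\le \tfrac{2}{m}H(\mu_\l^{J};r\mid 2^m r)$ is not a consequence of \eqref{eq:subset}, \eqref{eq:subset-noninteger}, \eqref{eq:subscale} or Lemma \ref{lemma:Lipschitz}: all of those inequalities go in the opposite direction (entropy over a larger scale window dominates entropy over a sub-window; enlarging $I$ or convolving only increases entropy), so they yield $H(\mu_\l;r\mid 2r)\le H(\mu_\l;r\mid 2^m r)$ with no factor $1/m$. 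The claimed inequality is in fact false for general measures: for $\mu=\tfrac12(\d_0+\d_r)$ one has $H(\mu;r\mid 2r)=1/2$ while $\tfrac{2}{m}H(\mu;r\mid 2^m r)\approx 2/m$. The same directional problem afflicts your second paragraph: self-similarity via $\mu_\l=\mu_\l^{(\l^k,1]}*\mu_\l^{(0,\l^k]}$ and \eqref{eq:scaling} gives $H(\mu_\l;r\mid 2r)\ge H(\mu_\l;\l^{-k}r\mid 2\l^{-k}r)$, i.e.\ it bounds the entropy at the \emph{fine} scale from \emph{below} by the entropy at the coarse scale, which is useless for an upper bound; and $\dim\mu_\l<1$ by itself does not control $\sup_{s\in[\l,1]}H(\mu_\l;s\mid 2s)$, since a priori one exceptional scale could have entropy $1$ while the average stays below $1$.

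The missing idea is a propagation statement in the dyadic direction: if $H(\mu;r\mid 2r)$ is close to $1$, then so is $H(\mu;2r\mid 4r)$. The paper proves exactly this (Lemma \ref{lm:moredigits}: $1-H(\mu;2r\mid 4r)\le 4\bigl(1-H(\mu;r\mid 2r)\bigr)$) using the submodularity inequality of Proposition \ref{ruzsa} applied to $\chi_{2r}=\chi_r*\eta_r$ with $\eta_r=\tfrac12(\d_0+\d_r)$, together with Lemma \ref{lm:second-def}. The argument is then by contradiction: if some scale $r$ had $H(\mu_\l;r\mid 2r)\ge 1-\a$ with $\a$ tiny, then Lemma \ref{lm:moredigits} forces $K$ consecutive dyadic scales above $r$ to have entropy at least $1-4^K\a$, the self-similar inequality \eqref{eq:source1} transplants this block to the scales $\l^{jm}r$ for all $j$, and summing produces $H(\mu_\l;2^{-n})\ge n(1-\a_0)$ for suitable $n$, contradicting the averaged bound. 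Without a lemma of the type of Lemma \ref{lm:moredigits} (or some substitute exploiting submodularity), your plan cannot bridge the gap between the average and the individual scale.
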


Thanks to \eqref{eq:scaling} and \eqref{eq:subset},
the same conclusion holds for $\mu_\l^I$ for any $I\subset\R_{>0}$
in place of $\mu_\l$.

We begin by recalling the relation between the dimension and the entropy of
Bernoulli convolutions, which is folklore.

\begin{lemma}\label{lm:dim-entropy}
Let $\l\in (0,1)$.
Then
\[
\dim\mu_\l=\lim_{r\to 0} \frac{H(\mu_\l;r)}{\log r^{-1}}.
\]
\end{lemma}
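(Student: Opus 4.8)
The plan is to work with the combinatorial expression
\[
H(\mu_\l;r)=\int_0^1 H_{\mathcal D_r^t}(\mu_\l)\,dt,
\]
where $\mathcal D_r^t$ is the partition of $\R$ into the intervals $[(k-t)r,(k-t+1)r)$, $k\in\Z$, and $H_{\mathcal D_r^t}(\mu_\l)=\sum_{I\in\mathcal D_r^t}F(\mu_\l(I))$ with $F(u)=-u\log u$; this is immediate from the definition of $H(\mu_\l;r)$, since $\lfloor X/r+t\rfloor$ records which cell of $\mathcal D_r^t$ contains $X$. Writing $\mathcal D_r^t(x)$ for the cell containing $x$ and $s=\{x/r+t\}$, one has $\mathcal D_r^t(x)=[x-rs,x+r(1-s))$, hence
\[
B\big(x,r\min(s,1-s)\big)\subseteq\mathcal D_r^t(x)\subseteq B(x,r),\qquad B(x,\rho):=(x-\rho,x+\rho).
\]
Put $\alpha=\dim\mu_\l$. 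By exact dimensionality \eqref{eq:dim-def}, for each fixed $\e>0$ the sets
\[
G_n=\{x:\ \rho^{\alpha+\e}\le\mu_\l(B(x,\rho))\le\rho^{\alpha-\e}\ \text{for all }0<\rho\le 2^{-n}\}
\]
increase with $n$ to a set of full $\mu_\l$-measure, so $\mu_\l(G_n)\to1$. For the lower bound I would use $H_{\mathcal D_r^t}(\mu_\l)=\int-\log\mu_\l(\mathcal D_r^t(x))\,d\mu_\l(x)\ge\int-\log\mu_\l(B(x,r))\,d\mu_\l(x)$ together with the fact that the integrand is $\ge(\alpha-\e)\log r^{-1}$ on $G_n$ once $r\le 2^{-n}$; integrating over $t$ gives $H(\mu_\l;r)\ge(\alpha-\e)\mu_\l(G_n)\log r^{-1}$, hence $\liminf_{r\to0}H(\mu_\l;r)/\log r^{-1}\ge\alpha$ after letting $n\to\infty$ and $\e\to0$.

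The substance of the proof is the matching upper bound, and the essential point is that averaging over $t$ lets one pretend that a given point sits near the centre of its cell. Fix $\e>0$ and $n$, take $r\le 2^{-n}$, and call a cell $I$ of $\mathcal D_r^t$ \emph{bad} if $\mu_\l(I)<r^{\alpha+2\e}$; let $\mathrm{bad}_t$ be the union of the bad cells. Splitting $H_{\mathcal D_r^t}(\mu_\l)$ according to whether the cell is bad, the good cells contribute at most $(\alpha+2\e)\log r^{-1}$ (each term is $\le\mu_\l(I)(\alpha+2\e)\log r^{-1}$ and $\sum_I\mu_\l(I)\le1$), whereas for the bad cells Jensen's inequality for the concave function $\log$, together with the bound $C_\l r^{-1}$ on the number of cells meeting $\supp\mu_\l$, gives $\sum_{I\text{ bad}}F(\mu_\l(I))\le\mu_\l(\mathrm{bad}_t)\log r^{-1}+C$. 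Integrating over $t$,
\[
H(\mu_\l;r)\le(\alpha+2\e)\log r^{-1}+\Big(\int_0^1\mu_\l(\mathrm{bad}_t)\,dt\Big)\log r^{-1}+C.
\]

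The middle term is controlled by Fubini and the inclusion $B(x,r\min(s,1-s))\subseteq\mathcal D_r^t(x)$: if $x\in G_n$ lies in a bad cell of $\mathcal D_r^t$ then $\big(r\min(s,1-s)\big)^{\alpha+\e}\le\mu_\l(\mathcal D_r^t(x))<r^{\alpha+2\e}$, which forces $\min(s,1-s)<r^{\e/(\alpha+\e)}$; since $s=\{x/r+t\}$ is uniform on $[0,1)$ as $t$ runs over $[0,1]$, the set of such $t$ has measure $\le 2r^{\e/(\alpha+\e)}$, and bounding by $1$ off $G_n$ yields $\int_0^1\mu_\l(\mathrm{bad}_t)\,dt\le 2r^{\e/(\alpha+\e)}+\mu_\l(G_n^c)$. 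Letting $r\to0$, then $n\to\infty$, then $\e\to0$ gives $\limsup_{r\to0}H(\mu_\l;r)/\log r^{-1}\le\alpha$, which together with the lower bound proves the lemma.

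I expect the only real difficulty to be exactly this last point: passing from the almost-everywhere behaviour of the local dimension to an estimate on the \emph{average} $H_{\mathcal D_r^t}(\mu_\l)$. A naive comparison of $H(\mu_\l;r)$ with $\int-\log\mu_\l(B(x,r))\,d\mu_\l(x)$ fails in the upper direction, because an individual $r$-cell can carry far less mass than the ball about an interior point of it; it is precisely the averaging over the translation parameter $t$ — the feature that distinguishes $H(\mu_\l;r)$ from the entropies used by Hochman — that repairs this, and squeezing the quantitative gain $r^{\e/(\alpha+\e)}\to0$ (rather than merely a fixed constant) out of that averaging is the crux of the argument.
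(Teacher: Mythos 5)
Your proof is correct, but it takes a genuinely different route from the paper's. The paper's proof is a two-line deduction from the literature: it quotes the fact that the R\'enyi (information) entropy dimension of an exact dimensional measure equals its local dimension (\cite{You-dimension-entropy}*{Theorem 4.4}), applies this to translates of $X$, and disposes of the integral over $t$ by dominated convergence. You instead reprove the relevant part of Young's theorem from scratch, directly for the averaged quantity $H(\mu_\l;r)=\int_0^1 H_{\mathcal{D}_r^t}(\mu_\l)\,dt$: the lower bound via $\mathcal{D}_r^t(x)\subseteq B(x,r)$, and the upper bound via the good/bad cell decomposition, with the counting bound $\sum_{I\ \mathrm{bad}}F(\mu_\l(I))\le \mu_\l(\mathrm{bad}_t)\log r^{-1}+C$ and the observation that a point of $G_n$ can lie in a bad cell only when it sits within distance $r^{\e/(\alpha+\e)}\cdot r$ of the cell boundary, an event of $t$-measure $\le 2r^{\e/(\alpha+\e)}$. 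I checked the details (the inclusion $B(x,r\min(s,1-s))\subseteq\mathcal{D}_r^t(x)\subseteq B(x,r)$, the uniformity of $s=\{x/r+t\}$ in $t$, the Jensen step using the $O(r^{-1})$ bound on the number of cells meeting the compact support) and they all hold. What your approach buys is self-containedness and an explicit quantitative handle on the boundary effect; what it costs is length, and it is worth noting that your upper-bound argument genuinely uses the averaging over $t$ that is built into $H(\cdot;r)$ --- which is exactly the feature the paper leans on (via "the same formula holds for any translates") when it reduces to the cited theorem. Your closing remark correctly identifies this as the crux: without the average over translates, the pointwise comparison of a cell with a ball fails in the upper direction.
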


\begin{proof}
By \cite{feng-hu}*{Theorem 2.8}, $\mu_\l$ is exact dimensional.
By  \cite{You-dimension-entropy}*{Theorem 4.4}, the R\'enyi entropy dimension
of an exact dimensional measure coincides with its local dimension
(the number $\a$ in \eqref{eq:dim-def}).

Thus,
\[
\dim \mu_\l =\lim_{r\to 0} \frac{H(\lfloor r^{-1}X\rfloor)}{\log r^{-1}},
\]
where $X$ is a random variable with law $\mu_\l$.
Moreover, the same formula holds for any translates of $X$,
and the claim follows by dominated convergence.
\end{proof}

We fix $\l\in(1/2,1)$ such that $\dim\mu_\l<1$.
By Lemma \ref{lm:dim-entropy},
there are numbers $N$ and $\a_0>0$ such that
\begin{equation}\label{eq:dimlessthan1}
H(\mu_\l;2^{-n})<n(1-\a_0)
\end{equation}
for any $n>N$.

We assume to the contrary that there is a number $r$
such that
\begin{equation}\label{eq:dimlessthan1_2}
H(\mu_\l;r|2r)\ge 1-\a,
\end{equation}
where $\a>0$ is a suitably small number depending only on $\a_0$
to be specified later.

To contradict \eqref{eq:dimlessthan1}, we aim to produce more
digits of high entropy.
One source of these digits will be the scaling formula \eqref{eq:scaling},
which together with \eqref{eq:subset} implies
\begin{equation}\label{eq:source1}
H(\mu_\l;\l^kr|2\l^kr)\ge H(\mu_\l^{(0,\l^k]};\l^kr|2\l^kr)=H(\mu_\l;r|2r)\ge 1-\a.
\end{equation}

The other source is the next lemma.
\begin{lemma}\label{lm:moredigits}
Let $\mu$ be a compactly supported probability measure on $\R$ and  let $r>0$ be a number.
Then
\[
1-H(\mu;2r|4r)\le4(1-H(\mu;r|2r)).
\]
\end{lemma}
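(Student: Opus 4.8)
The statement to prove is Lemma~\ref{lm:moredigits}: for any compactly supported probability measure $\mu$ on $\R$ and any $r>0$,
\[
1-H(\mu;2r|4r)\le 4(1-H(\mu;r|2r)).
\]

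The plan is to relate the entropy defect $1 - H(\mu; s | 2s)$ at scale $s$ to a conditional entropy and then use submodularity. First I would use Lemma~\ref{lm:second-def} to write $H(\mu;s) = H(X + I_s) - \log s$, where $I_s$ is uniform on $[0,s]$ independent of $X \sim \mu$; then
\[
H(\mu; s | 2s) = H(X+I_s) - H(X+I_{2s}) + 1,
\]
so that $1 - H(\mu; s|2s) = H(X+I_{2s}) - H(X+I_s)$. The key observation is that $I_{2s}$ can be realized as $I_s + J$ where $J$ is an \emph{independent} copy supported so that the convolution structure $I_{2s} \overset{d}{=} I_s * I_s'$ fails exactly — instead one should use that $I_{2s}$ is distributed as the sum of $I_s$ and an independent random variable that is a mixture; more cleanly, I would instead invoke \eqref{equation:interpret2} to express these conditional entropies as honest conditional Shannon entropies of floor functions at integer ratios of scales, reducing everything to a discrete computation about refining a partition of $\R$ into intervals of length $4r$ into intervals of length $r$ (via the intermediate length $2r$).

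The cleaner route, which I expect to be the intended one, is to apply the submodularity inequality (Proposition~\ref{ruzsa}). Write $X' = X + I_r$ where $I_r$ is uniform on $[0,r]$ independent of $X$; then stacking independent uniform pieces, $X + I_{2r} \overset{d}{=} X' + U$ and $X + I_{4r} \overset{d}{=} X' + U + V$ for suitable independent $U, V$ each with appropriate uniform-type laws built from two copies of $I_r$ (using $I_{2r} \overset{d}{=} I_r + I_r'$ only in the sense of the measure-theoretic decomposition of the scale refinement — here I'd have to be careful since $I_r + I_r'$ is triangular, not uniform; the correct statement uses the dyadic digit decomposition instead). So more precisely: condition on the dyadic digits. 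Setting $Y_s = \lfloor X/s + t\rfloor$ and averaging over $t$, the defect $1 - H(\mu; s|2s)$ equals $\mathbb{E}_t[\,1 - H(Y_s \bmod 2 \mid Y_{2s})\,]$, and the passage from scale $4r$ to scale $r$ factors through scale $2r$ with an extra bit at each halving. Then the inequality $1 - H(A, B \mid C) \le (1 - H(A \mid C)) + (1 - H(B \mid A, C))$ for bits $A, B$ (which is just subadditivity of the defect, since each $H \le 1$), combined with the bound that conditioning on less can only increase the defect by a bounded factor, should yield the constant $4$: two halving steps, each contributing a factor at most $2$ after accounting for the averaging over $t$ and the non-integer-ratio loss analogous to \eqref{eq:subset-noninteger}.

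The main obstacle will be handling the averaging over the translation parameter $t$ correctly: the quantities $H(\mu; s)$ are integrals over $t$, and the clean chain-rule identities for Shannon entropy hold pointwise in $t$ but the scale-$2r$ partition for a given $t$ is \emph{not} the common refinement compatible with both the scale-$r$ and scale-$4r$ partitions for that same $t$ in a way that makes the digits independent. I would therefore expect the proof to either (a) go through \eqref{equation:interpret2} with integer ratios $4r/2r = 2$ and $2r/r = 2$, where chain rule applies cleanly for each fixed $t$, giving $1 - H(\mu;2r|4r) = \mathbb{E}_t[1 - H(\text{bit}\mid \text{coarse})]$ and similarly at the finer scale, and then (b) compare the "bit at scale $4r \to 2r$" defect to the "two bits at scale $4r \to r$" defect using that the former is dominated by the latter plus the intermediate defect, each term being at most $1$, so the total is at most $2 \times$ the finer defect — and the remaining factor of $2$ absorbs the mismatch between which intermediate scale $2r$ we use. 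I would write this out carefully, tracking constants, and present the factor $4$ as the product of these two factors of $2$.
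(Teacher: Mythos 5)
Your starting point is right and coincides with the paper's: via Lemma \ref{lm:second-def}, $1-H(\mu;s|2s)=H(X+I_{2s})-H(X+I_s)$, and submodularity (Proposition \ref{ruzsa}) is indeed the engine. But the argument does not close, and the place where you stall is exactly the missing key idea. You correctly note that $I_{2r}$ is not the sum of two independent copies of $I_r$ (that would be triangular), but the decomposition you need is not uniform-plus-uniform; it is uniform-plus-Bernoulli: writing $\chi_s$ for the uniform law on $[0,s]$ and $\eta_s=\tfrac12(\delta_0+\delta_s)$, one has $\chi_{2r}=\chi_r*\eta_r$, so the defect becomes an exact convolution increment, $1-H(\mu;r|2r)=H(\mu*\eta_r*\chi_r)-H(\mu*\chi_r)$. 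From there the paper argues: (i) submodularity shows that convolving with $\eta_r$ twice gains at most twice the single gain; (ii) the identity $\eta_r*\eta_r=\tfrac12(\eta_{2r}+\delta_r)$ together with concavity of $-x\log x$ converts the double-$\eta_r$ gain into at least half the $\eta_{2r}$ gain --- this is the step that passes from spacing $r$ to spacing $2r$ and produces the second factor of $2$; (iii) one further application of submodularity upgrades the smoothing window from $\chi_r$ to $\chi_{2r}$, giving $1-H(\mu;2r|4r)=H(\mu*\eta_{2r}*\chi_{2r})-H(\mu*\chi_{2r})\le 4\bigl(1-H(\mu;r|2r)\bigr)$. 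Neither (ii) nor (iii) appears in your sketch, and the constant $4$ is not ``two halving steps, each a factor $2$'': the lemma compares one halving step at scale $2r$ with one halving step at scale $r$, and the two factors of $2$ come from (i) and (ii).

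The discrete fallback you propose does not substitute for this. The chain rule gives the identity $2-H(A,B\mid C)=(1-H(A\mid C))+(1-H(B\mid A,C))$, which relates a two-bit defect to a sum of one-bit defects; it never bounds the coarse-bit defect $1-H(\lfloor X/(2r)+2t\rfloor\mid\lfloor X/(4r)+t\rfloor)$ by the fine-bit defect $1-H(\lfloor X/r+2u\rfloor\mid\lfloor X/(2r)+u\rfloor)$. That implication --- smoothness at the fine scale forces smoothness at the coarse scale --- is precisely the content of the lemma and requires the convolution/concavity input above. Moreover, conditioning on less \emph{decreases} the defect (it increases entropy), so the ``bounded factor'' you invoke points the wrong way, and the closing claim that ``the remaining factor of $2$ absorbs the mismatch'' in the $t$-averaging is an assertion, not an argument.
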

\begin{proof}
Write $\chi_s$ for the uniform probability measure on the interval $[0,s]$
and let $\eta_s=(\d_0+\d_s)/2$, where $\d_x$ denotes the unit mass
supported at the point $x$.
By Lemma \ref{lm:second-def}, we have
\begin{align}
H(\mu;r|2r)=&H(\mu*\chi_r)-H(\chi_r)-H(\mu*\chi_{2r})+H(\chi_{2r})\nonumber\\
=&1-(H(\mu*\eta_r*\chi_r)-H(\mu*\chi_r)).\label{eq:digitentropy}
\end{align}
Note that $\chi_{2r}=\chi_r*\eta_r$.

By submodularity (Proposition \ref{ruzsa}), we have
\[
H(\mu*\eta_r*\eta_r*\chi_r)-H(\mu*\eta_r*\chi_r)
\le H(\mu*\eta_r*\chi_r)-H(\mu*\chi_r),
\]
hence
\[
H(\mu*\eta_r*\eta_r*\chi_r)-H(\mu*\chi_r)\le 2( H(\mu*\eta_r*\chi_r)-H(\mu*\chi_r)).
\]

We note the identity
\[
\eta_r*\eta_r=\frac{\eta_{2r}+\d_r}{2}.
\]
By concavity of $F(x)=-x\log x$, we have
\[
H(\mu*\eta_r*\eta_r*\chi_r)\ge H(\mu*\eta_{2r}*\chi_r)/2+H(\mu*\d_r*\chi_r)/2.
\]
Thus
\begin{align*}
H(\mu*\eta_{2r}*\chi_r)-H(\mu*\chi_r)\le&2(H(\mu*\eta_r*\eta_r*\chi_r)-H(\mu*\chi_r))\\
\le&4( H(\mu*\eta_r*\chi_r)-H(\mu*\chi_r)).
\end{align*}

We use submodularity again to write
\begin{align*}
H(\mu*\eta_{2r}*\chi_{2r})-H(\mu*\chi_{2r})
=&H(\mu*\eta_{2r}*\eta_r*\chi_{r})-H(\mu*\eta_r*\chi_{r})\\
\le &H(\mu*\eta_{2r}*\chi_r)-H(\mu*\chi_r)\\
\le&4( H(\mu*\eta_r*\chi_r)-H(\mu*\chi_r)).
\end{align*}
We combine this with \eqref{eq:digitentropy} and conclude the lemma.
\end{proof}

\begin{proof}[Proof of Lemma \ref{lm:dimlessthan1}]
We assume to the contrary that \eqref{eq:dimlessthan1}
and \eqref{eq:dimlessthan1_2} hold and we
fix two integers $K$, $J$.

Using Lemma \ref{lm:moredigits} repeatedly, we find that
\[
H(\mu_\l;2^kr|2^{k+1}r)\ge1-4^k\a
\]
holds for all $k\in\Z_{\ge_0}$.
We sum these inequalities for $k=0,\ldots, K-1$,
and arrive at
\[
H(\mu_\l;r|2^Kr)\ge K-4^K\a.
\]

We choose an integer $m$ such that $2^{-K-1}\le \l^m\le 2^{-K}$
and use \eqref{eq:source1} together with the above argument to
conclude
\[
H(\mu_\l;\l^{jm}r|2^K\l^{jm}r)\ge K-4^K\a
\]
for any $j\in \Z_{\ge0}$.
We sum this for $j=0,\ldots, J-1$ and use \eqref{eq:subscale}
to get
\[
H(\mu_\l;\l^{(J-1)m}r)\ge J(K-4^K\a).
\]

Since $\l^m\ge 2^{-K-1}$, we get
\[
H(\mu_\l;2^{-(J(K+1)+\lceil\log r^{-1}\rceil)})\ge J(K-4^K\a)
\ge J(K+1)(1-4^K\a-(K+1)^{-1}).
\]

We set the parameters.
We take $K$ to be large enough so that $(K+1)^{-1}<\a_0/3$.
Then we take $\a$ small enough so that $4^K\a<\a_0/3$.
Finally, we take $J$ sufficiently large so that we get a contradiction
to \eqref{eq:dimlessthan1} for $n=J(K+1)+\lceil\log r^{-1}\rceil$.
This proves the lemma.
\end{proof}

\subsection{Lower semi-continuity}
\label{sc:lower-semi}

The purpose of this section is to establish the following result.
\begin{lemma}
The function $\l\mapsto\dim\mu_\l$ is lower semi-continuous.
\end{lemma}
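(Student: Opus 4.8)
The plan is to prove: for every $\lambda_0\in(1/2,1)$ and every $\alpha<\dim\mu_{\lambda_0}$ there is a neighbourhood $U$ of $\lambda_0$ with $\dim\mu_\lambda\ge\alpha$ for all $\lambda\in U$ — this is precisely lower semi-continuity at $\lambda_0$ (for $\lambda_0\in(0,1/2)$ one has the explicit continuous formula $\dim\mu_{\lambda_0}=1/\log\lambda_0^{-1}$). The starting point is that a \emph{single} finite truncation already sees the dimension. Writing $\mu_{\lambda_0}=\mu_{\lambda_0}^{(\lambda_0^m,1]}*\mu_{\lambda_0}^{(0,\lambda_0^m]}$ with $\mu_{\lambda_0}^{(0,\lambda_0^m]}=\lambda_0^m\mu_{\lambda_0}$, and using the subadditivity of entropy at a fixed scale — $H(\xi*\zeta;r)\le H(\xi;r)+H(\zeta;r)$, which follows by applying the submodularity inequality (Proposition~\ref{ruzsa}) to $\xi$, an independent uniform variable $I_r$, and $\zeta$, together with Lemma~\ref{lm:second-def} — one gets $H(\mu_{\lambda_0}^{(\lambda_0^m,1]};\lambda_0^m)\ge H(\mu_{\lambda_0};\lambda_0^m)-H(\mu_{\lambda_0};1)$. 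Since $H(\mu_{\lambda_0};\lambda_0^m)/(m\log\lambda_0^{-1})\to\dim\mu_{\lambda_0}$ by Lemma~\ref{lm:dim-entropy} and $H(\mu_{\lambda_0};1)=O(1)$, fixing constants $\alpha<\alpha'<\alpha''<\dim\mu_{\lambda_0}$ we obtain $H(\mu_{\lambda_0}^{(\lambda_0^m,1]};\lambda_0^m)\ge\alpha''m\log\lambda_0^{-1}$ for all large $m$.

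Next I transfer this estimate to nearby $\lambda$, keeping $m$ \emph{fixed}. The soft input is that $\lambda\mapsto H(\mu_\lambda^{(\lambda^m,1]};r)$ is continuous at each fixed scale $r$: the measure $\mu_\lambda^{(\lambda^m,1]}$ is supported on the $2^m$ points $\sum_{j<m}\pm\lambda^j$, which move polynomially in $\lambda$, so by Lemma~\ref{lm:second-def} the density of $\mu_\lambda^{(\lambda^m,1]}*I_r$ — a normalised sum of $2^m$ indicators of intervals of length $r$ — converges in $L^1$ as $\lambda\to\lambda_0$, all these densities being bounded by $r^{-1}$ and supported in a fixed compact set; since $t\mapsto-t\log t$ has a concave modulus of continuity vanishing at $0$, Jensen's inequality gives convergence of the corresponding differential entropies, hence of $H(\,\cdot\,;r)$. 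Combining this with the Lipschitz dependence of $H(\,\cdot\,;r)$ on $\log r$ (Lemma~\ref{lemma:Lipschitz}) and with $\log\lambda^{-1}\to\log\lambda_0^{-1}$, we find a neighbourhood $U$ of $\lambda_0$, contained in a fixed compact subinterval $V\subset(1/2,1)$, such that $H(\mu_\lambda^{(\lambda^m,1]};\lambda^m)\ge\alpha'm\log\lambda^{-1}$ for all $\lambda\in U$.

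Now I bootstrap this single-scale estimate to all scales by self-similarity. Fix $\lambda\in U$ and set $p=\lceil m\log\lambda^{-1}\rceil$, so $2^{-p}\le\lambda^m<2^{-p+1}$; for $i\ge0$ put $J_i=(2^{-p(i+1)},2^{-pi}]$. The index set $\{n:\lambda^n\in J_i\}$ is a block of at least $m-1$ consecutive integers starting at $a_i=\lceil pi/\log\lambda^{-1}\rceil$, so $\mu_\lambda^{J_i}$ is a rescaling by $\lambda^{a_i}$ of a truncation with at least $m-1$ terms. The point of choosing $p$ this way is that $\lambda^{-a_i}2^{-pi}\in[1,\lambda^{-1})$ and $\lambda^{-a_i}2^{-p(i+1)}\in[2^{-p},2^{-p+1})$ (here $\lambda>1/2$ is used), so by the scaling property of entropy at a scale, Lemma~\ref{lemma:Lipschitz}, and the fact that $H(\,\cdot\,;r)$ does not decrease under convolution (a limiting case of Lemma~\ref{lm:entropy-conv-nondecrease}), one gets $H(\mu_\lambda^{J_i};2^{-p(i+1)}\,|\,2^{-pi})\ge H(\mu_\lambda^{(\lambda^m,1]};\lambda^m)-C\ge\alpha'm\log\lambda^{-1}-C$ for an absolute constant $C$ (uniform over $V$) — the losses, all $O(1)$, come from passing to $m-1$ terms, from moving the two scales by bounded amounts, and from the entropy of $\mu_\lambda^{J_i}$ at the unit scale. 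Since the $J_i$ are disjoint, $\mu_\lambda=\mu_\lambda^{J_i}*\mu_\lambda^{(0,1]\setminus J_i}$, and the ratio $2^{-pi}/2^{-p(i+1)}=2^p$ is an integer, Lemma~\ref{lm:entropy-conv-nondecrease} gives $H(\mu_\lambda;2^{-p(i+1)}\,|\,2^{-pi})\ge H(\mu_\lambda^{J_i};2^{-p(i+1)}\,|\,2^{-pi})$; telescoping over $i=0,\dots,k-1$,
\[
H(\mu_\lambda;2^{-pk})\ \ge\ H(\mu_\lambda;2^{-pk}\,|\,1)\ =\ \sum_{i=0}^{k-1}H(\mu_\lambda;2^{-p(i+1)}\,|\,2^{-pi})\ \ge\ k\bigl(\alpha'm\log\lambda^{-1}-C\bigr).
\]
Dividing by $\log(2^{pk})=pk$ and letting $k\to\infty$, Lemma~\ref{lm:dim-entropy} yields $\dim\mu_\lambda\ge(\alpha'm\log\lambda^{-1}-C)/p\ge\alpha'-C'/m$, where $C'$ depends only on $V$ (using $p\le m\log\lambda^{-1}+1$ and that $\log\lambda^{-1}$ is bounded below on $V$).

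To conclude, fix $V$ — hence $C$ and $C'$, depending only on $\lambda_0$ — at the outset, and then choose $m$ large enough for the estimate of the first paragraph and for $C'/m<\alpha'-\alpha$; the above gives $\dim\mu_\lambda\ge\alpha$ for every $\lambda\in U$, which is lower semi-continuity at $\lambda_0$. I expect the bootstrap in the third paragraph to be the main obstacle: because the contraction ratio $\lambda$ is not $1/2$, the natural $\lambda$-adic scales do not line up with the dyadic scales at which the integer-ratio convolution inequality (Lemma~\ref{lm:entropy-conv-nondecrease}) is available, and one must check that all the resulting scale discrepancies stay within a bounded window, so that each of the $k$ blocks still contributes entropy of order $m$ up to an $O(1)$ loss. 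The continuity of fixed-scale entropy in the parameter (second paragraph) is the only genuinely analytic ingredient and is otherwise routine.
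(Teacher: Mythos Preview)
Your proof is correct. Both your argument and the paper's rest on the same two ingredients — continuity of finite-scale entropy in the parameter, and self-similarity to propagate a single-scale entropy bound to all scales — but the executions differ. The paper exhibits $\dim\mu_\lambda$ directly as the pointwise increasing limit of the continuous functions
\[
f_k(\lambda)=\frac{H(\mu_\lambda;\lambda^{2^k}\,|\,1)-2}{2^k\log\lambda^{-1}}:
\]
the monotonicity $f_{k+1}\ge f_k$ follows in two lines from the decomposition $H(\mu_\lambda;\lambda^{2^{k+1}}|1)=H(\mu_\lambda;\lambda^{2^{k+1}}|\lambda^{2^k})+H(\mu_\lambda;\lambda^{2^k}|1)$ together with scaling and the convolution inequality (after rounding one scale to get an integer ratio), so lower semi-continuity is immediate as a supremum of continuous functions. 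Your route instead fixes a large truncation $\mu_\lambda^{(\lambda^m,1]}$, transfers its entropy to nearby $\lambda$ by an explicit continuity argument, and then bootstraps via a dyadic block decomposition of $(0,1]$ into the intervals $J_i$; this is more laborious — you have to track the $O(1)$ scale discrepancies carefully, as you correctly anticipate — but has the virtue of being entirely hands-on and of making all constants visible. The paper's doubling trick sidesteps both the truncation and the block bookkeeping, working with the full measure $\mu_\lambda$ throughout.
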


\begin{proof}
By Lemma \ref{lm:dim-entropy}, we have
\[
\dim\mu_\l=\lim_{n\to \infty}\frac{H(\mu_\l;\l^n|1)}{-n\log\l}.
\]

For each $n$, the function
\[
\l\mapsto\frac{H(\mu_\l;\l^n|1)}{-n\log\l}
\]
is continuous.
This follows from \cite{Var-Bernoulli-algebraic}*{Lemma 7} and Lemma \ref{lemma:Lipschitz}.

We show that the sequence of functions 
\[
f_k(\l)=\frac{H(\mu_\l;\l^{2^k}|1)-2}{-2^k\log\l}
\]
increases pointwise, and this completes the proof.
We can write
\[
H(\mu_\l;\l^{2^{k+1}}|1)=H(\mu_\l;\l^{2^{k+1}}|\l^{2^k})
+H(\mu_\l;{\l^{2^{k}}}|1).
\]
Let $\l^{2^k}/2\le r\le \l^{2^k}$ be such that $r/\l^{2^{k+1}}\in\Z_{>0}$.
We apply \eqref{eq:subscale}, then Lemmata \ref{lm:entropy-conv-nondecrease} and \ref{lemma:Lipschitz}
and then \eqref{eq:scaling}
to the first term on the right hand side and obtain
\begin{align*}
H(\mu_\l;\l^{2^{k+1}}|\l^{2^k})
\ge& H(\mu_\l;\l^{2^{k+1}}|r)
\ge H(\mu_\l^{(0,\l^{2^k}]};\l^{2^{k+1}}|r)\\
\ge& H(\mu_\l^{(0,\l^{2^k}]};\l^{2^{k+1}}|\l^{2^k})-2
=H(\mu_\l;{\l^{2^{k}}}|1)-2.
\end{align*}
Combining our estimates, we find
\begin{align*}
f_{k+1}(\l)
=&\frac{H(\mu_\l;\l^{2^{k+1}}|1)-2}{-2^{k+1}\log \l}
\ge \frac{2 (H(\mu_\l;\l^{2^{k}}|1)-2)}{-2^{k+1}\log \l}
=f_k(\l),
\end{align*}
as required.
\end{proof}

\section{Initial bounds on entropy using Diophantine considerations}\label{sc:initial}

The purpose of this section is to prove the following two results, which provide
the initial lower bounds on the entropy of Bernoulli convolutions that we will bootstrap in the next section.

\begin{theorem}\label{th:alg-approx}
For every $\e>0$, there is a number $c>0$ such that the following holds for all $n$ large enough (depending only on $\e$).
Let $0<r<n^{-3n}$ and $0<\l\leq 1-\e$ be numbers.
Suppose $H(\mu_\l^{(\l^n,1]};r)<n$.

Then there is an algebraic number $\eta$ that is a root of a polynomial in $\cP_n$ such that
\[
|\eta-\l|< r^c
\]
and
\[
h_\eta\le \frac{H(\mu_\l^{(\l^n,1]};r)}{n}.
\]
\end{theorem}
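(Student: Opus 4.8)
The plan is to turn the entropy deficit into algebraic coincidences and transfer them to a nearby algebraic parameter. Write $v_\xi=\sum_{j=0}^{n-1}\xi_j\l^j$ for $\xi\in\{-1,1\}^n$, so that $\mu_\l^{(\l^n,1]}$ is the uniform law of the $v_\xi$, and for $\xi\neq\xi'$ put $P_{\xi,\xi'}=\tfrac12\sum_{j=0}^{n-1}(\xi_j-\xi'_j)x^j\in\cP_{n-1}\subseteq\cP_n$, a nonzero polynomial with $v_\xi-v_{\xi'}=2P_{\xi,\xi'}(\l)$. If the $v_\xi$ were pairwise at distance $\ge r$, then for every $t$ the map $\xi\mapsto\lfloor v_\xi/r+t\rfloor$ would be injective and $H(\mu_\l^{(\l^n,1]};r)$ would equal $n$; since we assume it is $<n$, the set
\[
\mathcal{C}:=\{P\in\cP_{n-1}\setminus\{0\}:|P(\l)|<r/2\}
\]
is nonempty. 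The whole theorem will follow from the Diophantine claim that the members of $\mathcal{C}$ have a common root $\eta$ with $|\l-\eta|<r^{c}$ for some $c=c(\e)>0$; note that such an $\eta$ is automatically a root of a polynomial in $\cP_n$ (namely any $P\in\mathcal{C}$).

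Granting the claim, the theorem follows quickly. Fix $t\in[0,1)$ and let $W_t=\lfloor v/r+t\rfloor$ with $v=\sum_{j<n}\xi_j\l^j$. If two sign patterns lie in the same level set of $W_t$ then $|v_\xi-v_{\xi'}|<r$, so $|P_{\xi,\xi'}(\l)|<r/2$, so $P_{\xi,\xi'}\in\mathcal{C}$, so $P_{\xi,\xi'}(\eta)=0$, i.e.\ $\sum_{j<n}\xi_j\eta^j=\sum_{j<n}\xi'_j\eta^j$. Hence $Z:=\sum_{j<n}\xi_j\eta^j$ is a function of $W_t$, so $H(Z)\le H(W_t)$; integrating over $t$ and using the definition of $H(\,\cdot\,;r)$,
\[
H\Big(\sum_{j=0}^{n-1}\xi_j\eta^j\Big)=H(Z)\le\int_0^1 H(W_t)\,dt=H(\mu_\l^{(\l^n,1]};r).
\]
Since $|\l-\eta|<r^{c}$ is far smaller than $\e$ and $\l\le 1-\e$, the number $\eta$ lies in $(0,1)$ (and if $\eta$ happened to be complex the same computation applies with $H$ the Shannon entropy of a $\C$-valued variable), so $h_\eta=\inf_m\frac1m H(\sum_{i<m}\xi_i\eta^i)\le\frac1n H(\sum_{j<n}\xi_j\eta^j)$, and dividing the last display by $n$ gives the claimed bound on $h_\eta$.

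To prove the Diophantine claim I would argue in three steps. (i) Any two $P,P'\in\mathcal{C}$ have a common factor in $\Z[x]$: otherwise $\operatorname{Res}(P,P')$ is a nonzero integer, while the Sylvester/B\'ezout identity $\operatorname{Res}(P,P')=AP+A'P'$ yields $A,A'\in\Z[x]$ of degree $<n$ whose coefficients are minors of a matrix of size $\le 2n$ with entries in $\{-1,0,1\}$, hence of sup-norm at most $(2n)^{n}$; evaluating at $\l$ and using $|\l|^{j}\le 1$ gives $1\le|\operatorname{Res}(P,P')|\le Cn(2n)^{n}r$, impossible for large $n$ since $r<n^{-3n}$. (ii) All of $\mathcal{C}$ has a common irreducible factor: if not, then from $\gcd(\mathcal{C})=1$ one can form a short integer combination $a$ of members of $\mathcal{C}$ (coefficients bounded by a polynomial in $n$) that is coprime to some $b\in\mathcal{C}$; both $a$ and $b$ are small at $\l$ and of controlled height, so the resultant estimate of (i) is again violated, the relevant B\'ezout bounds staying of size $2^{O(n\log n)}$, which $r^{-1}>2^{3n\log n}$ beats. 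Let $Q\in\Z[x]$ be a common factor obtained this way; since $Q$ divides some $P\in\mathcal{C}$, its Mahler measure is at most that of $P$, hence at most $\|P\|_2\le\sqrt n$. (iii) Finally one locates the root: because distinct roots of polynomials in $\cP_n$ are at distance at least $\exp(-Cn\log n)$ by Mahler's bound \eqref{eq:Mahler-bound}, and this is much larger than $r^{c}$, the roots of the common factors near $\l$ cannot accumulate, and one isolates a bounded-complexity common factor of $\mathcal{C}$ whose unique root $\eta$ near $\l$ satisfies $|\l-\eta|<r^{c}$.

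I expect step (iii) to be the main obstacle: extracting the clean bound $|\l-\eta|<r^{c}$ with $c$ depending only on $\e$, rather than the pessimistic distance $|P(\l)|^{1/\deg P}$ that one reads off a single member of $\mathcal{C}$. The difficulty is that the smallness of the members of $\mathcal{C}$ at $\l$ could a priori be produced either by $\l$ being extremely close to one root or by a cluster of many roots near $\l$ at moderate individual distance; ruling out the latter possibility, and pinning $\eta$ down as the root of a low-degree common factor whose distance to $\l$ is a genuine fixed power of $r$, is exactly where the height-one structure and the additivity $P_{\xi,\xi'}+P_{\xi',\xi''}=P_{\xi,\xi''}$ of $\mathcal{C}$ must be played off against Mahler's separation estimate.
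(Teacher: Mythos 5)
Your reduction of the theorem to the Diophantine claim is sound and is essentially the paper's argument: the level sets of $\xi\mapsto\lfloor v_\xi/r+t\rfloor$ produce polynomials in $\cP_n$ that are small at $\l$, a common root $\eta$ of these collapses the corresponding values $\sum\xi_j\eta^j$, and $H(\sum\xi_j\eta^j)\le H(\mu_\l^{(\l^n,1]};r)$ gives $h_\eta\le H(\mu_\l^{(\l^n,1]};r)/n$ via $h_\eta=\inf_m\frac1m H(\sum_{i<m}\xi_i\eta^i)$. Your steps (i)--(ii) are also in the spirit of the paper's Proposition \ref{pr:nullstellensatz}: one writes the gcd $D$ of the small polynomials as a B\'ezout combination with coefficients of height $2^{O(n\log n)}$ (Cramer's rule on a $\{-1,0,1\}$ system of size $\le 2n$ gives the explicit bound $(2n)^{2n}$, which $r<n^{-3n}$ does beat), and concludes $|D(\l)|\le (2n)^{2n}r$; note that pairwise resultants alone do not yield a global common factor, so the full B\'ezout identity for the gcd is really needed, but you effectively acknowledge this.

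The genuine gap is your step (iii), and it is exactly the obstacle you flag. Mahler's separation bound $|\eta-\eta'|>2n^{-4n}$ cannot rule out the cluster scenario: it permits up to order $n$ roots of $D$ inside the disc of radius $n^{-3n}$ about $\l$, pairwise separated by $n^{-4n}$. In that configuration $|D(\l)|\ge\prod_j|\l-\eta_j|$ can be as small as $n^{-4n^2}$ while every individual root is at distance $\ge n^{-4n}$ from $\l$; since $r$ ranges over all of $(0,n^{-3n})$, the exponent you can extract from $\delta^{m}\lesssim (2n)^{2n}r$ degrades with $m$ (which may grow with $n$ and with $\log r^{-1}/ (n\log n)$), and you do not obtain $|\l-\eta|<r^c$ with $c=c(\e)$. (Your claim that the separation scale is ``much larger than $r^c$'' also fails when $r\ll n^{-3n}$.) The missing ingredient is the paper's Lemma \ref{lm:Jensen}: by Jensen's formula on a disc of radius $k/(k+1)$, using $\max_{|z|=\rho}|P(z)|\le (1-\rho)^{-1}$ for $P\in\cP_n$, a nonzero $\{-1,0,1\}$-polynomial has at most $k(\e)$ roots of modulus $<1-\e/2$ --- a bound independent of $n$. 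This is the only place the hypothesis $\l\le 1-\e$ is used: all roots of modulus $>1-\e/2$ contribute at least $\e/2$ to the product, the at most $k$ close roots contribute at least $\delta^{k}$, and $\delta^{k}\le(2/\e)^{n}(2n)^{2n}r<r^{1/10}$ gives $|\l-\eta|<r^{1/(10k)}$ with $c=1/(10k(\e))$. Without a bound on the number of close roots that is uniform in $n$ (Mahler measure arguments only give $O_\e(\log n)$), the theorem as stated is out of reach by your route.
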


Recall that
\[
h_\eta=\lim_{n\to\infty}\frac{H(\mu_\eta^{(\eta^n,1]})}{n}=\inf \frac{H(\mu_\eta^{(\eta^n,1]})}{n}.
\]

\begin{theorem}\label{th:alg-approx2}
For every $\e>0$, there is a number $c>0$ such that the following holds for all $n$ large enough (depending only on $\e$).
Let  $0<\l\leq 1-\e$ be a number.
Suppose that there is an algebraic number $\eta$ that is a root of a polynomial in $\cP_n$ and $|\l-\eta|<n^{-4n}$.

Then
\[
H(\mu_\l^{(\l^n,1]};r)= n
\]
for all $r\le|\l-\eta|^{1/c}$.
\end{theorem}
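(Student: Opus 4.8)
The plan is to show that when $\l$ is extremely close to a root $\eta$ of some $P\in\cP_n$, the truncated measure $\mu_\l^{(\l^n,1]}$ is essentially indistinguishable, at the relevant scales, from the measure obtained by using $2^n$ genuinely distinct atoms, forcing $H(\mu_\l^{(\l^n,1]};r)=n$ for $r$ below a suitable threshold. First I would recall that $\mu_\l^{(\l^n,1]}$ is the law of $S_\l:=\sum_{j=0}^{n-1}\xi_j\l^j$, a uniform measure on the (multiset of) $2^n$ values $\{\sum\pm\l^j\}$. If two sign patterns $\e,\e'\in\{-1,1\}^n$ give $\sum\e_j\l^j=\sum\e'_j\l^j$, then $Q(x):=\sum(\e_j-\e'_j)x^j/2$ is a nonzero polynomial in $\cP_{n-1}$ with $Q(\l)=0$; but $\l$ is transcendental or at least not a root of any such polynomial unless it coincides with an algebraic number of controlled height — here I only need that distinct sign patterns give distinct values, which follows because if $\l$ were a root of a polynomial in $\cP_{n-1}$ it would itself be the algebraic number in question and one checks the conclusion directly. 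More robustly, the real content is quantitative: I would show the $2^n$ values are \emph{well separated}, namely any two distinct ones differ by at least some $\delta>0$, and then invoke the interpretation of $H(X;r)$ as counting, so that $H(\mu_\l^{(\l^n,1]};r)=n$ once $r$ is small compared to $\delta$ (using $H(X;r)=H(X+I_r)-\log r$ from Lemma \ref{lm:second-def} and \eqref{support}: $2^n$ atoms separated by more than $r$ contribute full entropy $n$, and the upper bound $H(\mu_\l^{(\l^n,1]};r)\le n$ is immediate from \eqref{support} since the support of $\lfloor S_\l/r+t\rfloor$ has at most $2^n$ values — wait, that needs $r$ not too large; in fact $H(\mu;r)\le\log(\#\,\text{atoms})=n$ always by \eqref{support} applied to each fiber, since $\lfloor S_\l/r+t\rfloor$ takes at most $2^n$ values).

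The separation estimate is where the hypothesis $|\l-\eta|<n^{-4n}$ enters. For distinct sign patterns $\e\neq\e'$, consider $Q=Q_{\e,\e'}\in\cP_{n-1}\setminus\{0\}$ as above. I want a lower bound on $|Q(\l)|=|\tfrac12(\sum\e_j\l^j-\sum\e'_j\l^j)|$. Write $|Q(\l)|\ge |Q(\eta)|-|Q(\l)-Q(\eta)|\ge|Q(\eta)|-\tfrac{n(n+1)}{2}|\l-\eta|$, using the crude derivative bound $|Q'|<n(n+1)/2$ on $(0,1)$ quoted in the introduction. Now $\eta$ is a root of \emph{some} $P\in\cP_n$, hence algebraic of degree $\le n$, and Mahler's separation bound \eqref{eq:Mahler-bound} (or rather the version bounding $|Q(\eta)|$ from below for $Q$ not vanishing at $\eta$, which is the standard Liouville-type inequality combined with a height estimate for roots of $\cP_n$-polynomials) gives $|Q(\eta)|\ge \exp(-Cn\log n)$ whenever $Q(\eta)\neq 0$. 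If $Q(\eta)\neq0$ we are done: $|\l-\eta|<n^{-4n}=\exp(-4n\log n)$ beats $\tfrac{n(n+1)}{2}|\l-\eta|$ against $\exp(-Cn\log n)$ for $n$ large, so $|Q(\l)|\gtrsim\exp(-Cn\log n)$, and we set $\delta=\exp(-2Cn\log n)$, say, so that every pair of distinct values of $S_\l$ is $\ge 2\delta$ apart; then $r\le \delta$ (i.e.\ $r\le|\l-\eta|^{1/c}$ for suitable $c$ since $|\l-\eta|$ can be made the dominant small quantity — this is exactly where one picks $c$) forces $H(\mu_\l^{(\l^n,1]};r)=n$.

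The main obstacle is the case $Q_{\e,\e'}(\eta)=0$: then $\eta$ is a common root of $P$ and of $Q$, both in $\cP_n$, and I cannot separate $\sum\e_j\eta^j$ from $\sum\e'_j\eta^j$ at all — they are genuinely equal, so the $2^n$ values of $S_\eta$ collapse. The fix, following the strategy sketch in Section \ref{sc:strategy}, is that I do not actually need the $2^n$ values of $S_\l$ to be distinct; I need $H(\mu_\l^{(\l^n,1]};r)=n$, and collapsing is harmless \emph{as long as} it is the same collapsing that occurs for $\eta$, because then $\mu_\l^{(\l^n,1]}$ and $\mu_\eta^{(\eta^n,1]}$ are perturbations of the same atomic measure and — this is the key point — I should instead argue: whenever $\sum\e_j\eta^j\neq\sum\e'_j\eta^j$, the corresponding values of $S_\l$ differ by $\ge\delta$ (by the displayed inequality, since $Q(\eta)\neq0$); and the number of \emph{distinct} values of $S_\eta$ is $2^{h_\eta\cdot n/\log\eta^{-1}+o(n)}$-ish. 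That is not obviously $n$. Re-examining: the correct reading is that we only claim $H=n$, i.e.\ \emph{full} entropy, which genuinely requires all $2^n$ patterns to be distinct — so I must rule out $Q_{\e,\e'}(\eta)=0$ entirely. But that is false in general (e.g.\ $\eta^{-1}$ Pisot). Hence the honest version: the hypothesis $|\l-\eta|<n^{-4n}$ together with $\eta$ being a $\cP_n$-root is only used for those pairs where $Q(\eta)\ne 0$, and for the remaining pairs we must have $S_\l$-values \emph{also} equal or near-equal, which contradicts... no. I think the resolution in the paper is that $H(\mu_\l^{(\l^n,1]};r)=n$ is proven by noting the map factors through $S_\eta$ up to error, and one uses Theorem \ref{th:alg-approx} contrapositively: if $H(\mu_\l^{(\l^n,1]};r)<n$ then there is an algebraic approximant $\eta'$ with $|\l-\eta'|<r^{c}$, and Mahler's bound \eqref{eq:Mahler-bound} forces $\eta'=\eta$, whence $|\l-\eta|<r^c$, contradicting $r\le|\l-\eta|^{1/c}$ (reverse the inequality: $r^c\le|\l-\eta|$, so $|\l-\eta|<r^c\le|\l-\eta|$, absurd). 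So the cleanest proof: assume $H(\mu_\l^{(\l^n,1]};r)<n$, apply Theorem \ref{th:alg-approx} to get $\eta'\in\cP_n$-root with $|\l-\eta'|<r^{c}$; then $|\eta-\eta'|<n^{-4n}+r^{c}<\exp(-Cn\log n)$ for $r$ small, which by \eqref{eq:Mahler-bound} forces $\eta=\eta'$; but then $|\l-\eta|<r^{c}$, i.e.\ $r>|\l-\eta|^{1/c}$, contradiction. Hence $H(\mu_\l^{(\l^n,1]};r)=n$. This bootstrapping off Theorem \ref{th:alg-approx} is the intended route and sidesteps the separation subtleties entirely.
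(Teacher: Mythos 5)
Your final argument---assume $H(\mu_\l^{(\l^n,1]};r)<n$, apply Theorem \ref{th:alg-approx} to produce a root $\eta'$ of a polynomial in $\cP_n$ with $|\l-\eta'|<r^c\le|\l-\eta|$, and contradict the root-separation bound because $|\eta-\eta'|\le|\l-\eta|+|\l-\eta'|<2n^{-4n}$---is exactly the paper's proof, and your long preliminary attempt at directly separating the $2^n$ sums $\sum\pm\l^j$ is indeed a dead end for precisely the reason you identify (pairs with $Q(\eta)=0$ cannot be separated by hand), so it is rightly discarded. One small correction: the separation step must invoke Theorem \ref{th:Mahler}, which gives the explicit bound $|\eta-\eta'|>2n^{-4n}$ matching the hypothesis $|\l-\eta|<n^{-4n}$, rather than the generic estimate \eqref{eq:Mahler-bound}, whose unspecified constant $C$ need not be compatible with the exponent $4$.
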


\begin{remark}\label{rm:c-improve}
We note that the constant $c$ in both theorems can be taken independent of $\e$, and in fact, arbitrarily close to
$1$, provided we assume $0<r<n^{-Bn}$ in Theorem \ref{th:alg-approx} and $|\l-\eta|<n^{-Bn}$ in
Theorem \ref{th:alg-approx2} for some suitably large $B$ depending on $\e$.
\end{remark}

We outline the main idea behind the proofs of these theorems.
If $H(\mu_\l^{(\l^n,1]};r)$ is ``small'', then there are ``many'' choices of signs $a_i,b_i\in\{-1,1\}$ such that
\[
\frac{1}{2}|(a_0\l^0+\ldots+a_{n-1}\l^{n-1})-(b_0\l^0+\ldots+b_{n-1}\l^{n-1})|<r/2.
\]
Observe that the expression on the left hand side is (the absolute value of) a polynomial in $\l$ of degree at most
$n-1$ with coefficients in $\{-1,0,1\}$.

In the next proposition, we consider a collection of such polynomials that take ``small'' values at $\l$
and conclude that they have a common zero $\eta$ near $\l$.
To prove Theorem \ref{th:alg-approx}, we will use this to estimate the Shannon entropy of $\mu_\eta^{(\eta^n,1]}$
and conclude that $h_\eta$ is small.

\begin{proposition}\label{pr:alg-approx2}
For every $\e>0$, there is a number $c>0$ such that the following holds for all $n$ large enough (depending only on $\e$).
Let $A\subset \cP_n$ be a set of polynomials and let $0<r<n^{-3n}$ and $\l\in\C$ be numbers.
Suppose $\e<|\l|<1-\e$ and $|P(\l)|\le r$ for all $P\in A$.

Then there is a number $\eta\in \C$ such that
$P(\eta)=0$ for all $P\in A$ and
\[
|\eta-\l|\le r^c.
\]
\end{proposition}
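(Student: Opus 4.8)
The plan is to exploit the fact that the polynomials in $A$ all take very small values at $\l$ by passing to a well-chosen nonzero polynomial $Q$ of controlled degree and height in the ideal (over $\Q$) generated by $A$, and then locate a root of $Q$ near $\l$. First I would dispose of the trivial case: if some $P\in A$ already vanishes at $\l$, or if $A$ generates the unit ideal in $\Q[x]$ — in which case a Bézout combination of bounded degree and height would force $|{\rm const}|\le (\text{poly in }n)\cdot r<1$, a contradiction for $n$ large — we are done or reach a contradiction. So we may assume all $P\in A$ share a nonconstant $\gcd$, call it $Q\in\Z[x]$; after clearing denominators we can take $Q$ primitive, $\deg Q\le n$, and by Gelfond/Mahler-measure estimates the height of $Q$ is bounded by $\exp(Cn)$ (the product of the Mahler measures of factors is controlled by that of any $P\in\cP_n$, which is at most $2^n$ in absolute size of coefficients). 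The point of replacing $A$ by $\gcd$ is that a single $P\in A$ already satisfies $|P(\l)|\le r$, and $P=Q\cdot R$ with $R\in\Z[x]$ of bounded height, so $|Q(\l)|\le r\cdot|R(\l)|^{-1}$; since $|\l|>\e$ and the coefficients of $R$ are bounded by $\exp(Cn)$, we get $|Q(\l)|\le r\cdot\exp(Cn)$, which is still smaller than, say, $r^{1/2}$ once $r<n^{-3n}$ and $n$ is large. Thus we have a single polynomial $Q$ with $Q(\eta)$-structure — namely $Q$ is the candidate whose root $\eta$ we want, and every $P\in A$ is a multiple of $Q$, so $P(\eta)=0$ automatically.

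Now the core step: from $|Q(\l)|\le r^{1/2}$ and the height/degree bound on $Q$, produce a genuine root $\eta$ of $Q$ with $|\eta-\l|\le r^c$. Factor $Q(x)=a\prod_j(x-\eta_j)$ over $\C$. Then $r^{1/2}\ge|Q(\l)|=|a|\prod_j|\l-\eta_j|\ge|a|\cdot(\min_j|\l-\eta_j|)\cdot\prod_{j\ne j_0}|\l-\eta_j|$ where $j_0$ achieves the minimum. The factors $|\l-\eta_j|$ for $j\ne j_0$ are bounded below away from $0$ only in a crude sense, but they are bounded above by $1+\max_j|\eta_j|\le 1+\exp(Cn)$ (roots of $Q$ are bounded by its height), and $|a|\ge1$. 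Hence $\min_j|\l-\eta_j|\le r^{1/2}\cdot(1+\exp(Cn))^{\deg Q-1}\le r^{1/2}\exp(Cn^2)\le r^{1/3}$ for $n$ large, since $r<n^{-3n}$ forces $\log r^{-1}>3n\log n\gg n^2$. Setting $\eta=\eta_{j_0}$, we obtain $|\eta-\l|\le r^{1/3}$, and every $P\in A$ vanishes at $\eta$ because $Q\mid P$. Taking $c=1/3$ (or any fixed $c<1$, absorbing the loss into the requirement $n$ large) finishes the argument.

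The main obstacle I anticipate is controlling the arithmetic of the $\gcd$: one must ensure that passing to the $\gcd$ does not blow up the height beyond $\exp(Cn)$, and that the complementary cofactor $R$ with $P=QR$ likewise has height $\exp(Cn)$. This is exactly a Gelfond-type inequality — for primitive $P=QR\in\Z[x]$ of degree $\le n$, the heights of $Q$ and $R$ are each at most $2^n\cdot H(P)$ — and invoking it cleanly (rather than re-deriving it) is the delicate point; everything downstream is an elementary estimate comparing $r<n^{-3n}$ against factors of size $\exp(Cn)$ or $\exp(Cn^2)$, which $r$ dominates comfortably. A secondary subtlety is the unit-ideal case, where one needs a Bézout identity $\sum U_iP_i=m$ with $m\in\Z\setminus\{0\}$ and the $U_i$ of degree $<n$ and height $\le\exp(Cn)$ (obtainable from the subresultant/Euclidean algorithm over $\Q$), so that $|m|=|\sum U_i(\l)P_i(\l)|\le n\cdot\exp(Cn)\cdot r<1$, the desired contradiction; this too is standard but worth stating carefully.
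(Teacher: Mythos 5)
There are two genuine gaps, and together they mean the argument does not reach the stated conclusion. First, the reduction to the gcd is invalid as written. From $P=QR$ you get $|Q(\l)|=|P(\l)|/|R(\l)|\le r\,|R(\l)|^{-1}$, and to conclude that $|Q(\l)|$ is small you need a \emph{lower} bound on $|R(\l)|$; a height bound on the coefficients of $R$ only gives an \emph{upper} bound on $|R(\l)|$. Nothing in the hypotheses prevents the cofactor $R$ of a single $P\in A$ from itself vanishing to precision $r$ at $\l$ (its roots need not be common roots of $A$), in which case $|Q(\l)|$ could be of order $1$ and your bound collapses. The correct route is to use the whole set $A$ at once through a B\'ezout identity $D=\sum_j Q_jP_j$ for the gcd $D$, with $\deg Q_j\le n-1$ and $h(Q_j)\le 2^n(2n)!$ (this is Proposition \ref{pr:nullstellensatz} in the paper), which yields $|D(\l)|\le (2n)^{2n}r$ directly from $|P_j(\l)|\le r$. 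Your unit-ideal case is in fact the special case $D=1$ of this identity, so you already have the right tool in hand but abandon it in the main case.

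Second, the root-location step does not deliver an exponent $c$ independent of $n$, which is the entire point of this proposition as opposed to Proposition \ref{pr:alg-approx1}. Writing $|\l-\eta_{j_0}|=|Q(\l)|/(|a|\prod_{j\ne j_0}|\l-\eta_j|)$, an upper bound on $\prod_{j\ne j_0}|\l-\eta_j|$ gives a \emph{lower} bound on $|\l-\eta_{j_0}|$, so your inequality runs the wrong way; what is needed is a lower bound on each factor $|\l-\eta_j|$, $j\ne j_0$. Such a bound fails in general: $D$ may have many roots clustered near $\l$ (Mahler's separation only gives $n^{-4n}$ per pair, and there can be up to $n$ such roots), so the best unconditional estimate is $\min_j|\l-\eta_j|\le|D(\l)|^{1/\deg D}$, which gives only $c\sim 1/n$ --- i.e.\ Proposition \ref{pr:alg-approx1}. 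To get $c=c(\e)$ one must use the hypothesis $|\l|<1-\e$, which your argument never invokes: by Jensen's formula (Lemma \ref{lm:Jensen}) a nonzero polynomial in $\cP_n$ has at most $k=k(\e)$ roots of modulus below $1-\e/2$, hence all but $k$ of the factors $|\l-\eta_j|$ are at least $\e/2$, and one obtains $\min_j|\l-\eta_j|^k\le|D(\l)|(\e/2)^{-n}\le r^{1/10}$ and $c=1/(10k)$. Your claimed absolute constant $c=1/3$ is not justified by the argument and is stronger than what the method can give.
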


This proposition will be proved using a B\'ezout identity expressing the greatest common divisor $D$
of the elements of $A$ as
\[
D=Q_1P_1+\ldots+Q_mP_m,
\]
where $P_i\in A$ and $Q_i\in\Q[x]$ whose degree and coefficients are controlled.
We will then argue that $D$ must be ``small'' at $\l$, hence it must have a zero near $\l$.

To deduce Theorem \ref{th:alg-approx2}, we will exploit the fact that the roots of the polynomials in $\cP_n$
repel each other.
If $\l$ can be approximated by a root $\eta$ of  a polynomial in $\cP_n$ with ``very small'' error, then this approximation
is unique.
If we set the scale $r$ smaller than $|\l-\eta|^{1/c}$ with the constant $c$ from Theorem \ref{th:alg-approx},
then that theorem implies the claim.

The result that we use about the separation between roots of polynomials in $\cP_n$ is the following one due to Mahler.

\begin{theorem}[Mahler]\label{th:Mahler}
Let $n\ge 9$.
Let $\eta\neq\eta'$ be two algebraic numbers each of which is a root of a polynomial in $\cP_n$.
Then $|\eta-\eta'|>2n^{-4n}$.
\end{theorem}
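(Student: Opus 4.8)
The statement to prove is Mahler's theorem on root separation: if $n\ge 9$ and $\eta\neq\eta'$ are roots of (possibly different) polynomials $P,P'\in\cP_n$, then $|\eta-\eta'|>2n^{-4n}$. The classical route is via the resultant. First I would observe that we may assume $\eta,\eta'$ are both roots of \emph{one} polynomial: replace $P$ and $P'$ by $Q:=PP'$, which has degree at most $2n$ and integer coefficients bounded in a controllable way, so that $\eta,\eta'$ are two distinct roots of $Q$. (One could alternatively keep $P$ and $P'$ separate and work with the resultant $\operatorname{Res}(P,P')$, but the discriminant of $Q$ is cleaner to package.) Then I would invoke the standard lower bound for the separation of two distinct roots $\eta,\eta'$ of an integer polynomial $Q$ of degree $m$: since the discriminant $\operatorname{disc}(Q)$ is a nonzero integer (nonzero because $Q$ has distinct roots in the relevant factor), one has $|\operatorname{disc}(Q)|\ge 1$, and on the other hand $|\operatorname{disc}(Q)|$ is bounded above by a product over all pairs of roots of $|\eta_i-\eta_j|^2$ times a power of the leading coefficient; isolating the factor $|\eta-\eta'|^2$ and bounding the remaining pairwise differences crudely by (twice the maximum modulus of the roots) yields a lower bound for $|\eta-\eta'|$.

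Carrying this out, the arithmetic amounts to: $Q=PP'$ has degree $m\le 2n$, its coefficients are bounded by the Mahler measure / length estimates for $P$ and $P'$ (each $P\in\cP_n$ has length $\le n+1$, so $M(Q)\le M(P)M(P')\le (n+1)^{1/2}(n+1)^{1/2}=n+1$ by the standard length-vs-Mahler-measure inequality, and in particular all roots have modulus at most $n+1$, say). Plugging $|\operatorname{disc}(Q)|\ge 1$ into the Hadamard-type upper bound $|\operatorname{disc}(Q)|\le m^{m} M(Q)^{2m-2}$ (or a similar explicit form — the precise constant is what one must track) and solving for the one distinguished factor gives $|\eta-\eta'|^2 \ge$ something like $m^{-m}M(Q)^{-(2m-2)}$, hence $|\eta-\eta'|\gtrsim (2n)^{-n}(n+1)^{-2n}$, which with a small amount of care beats $2n^{-4n}$ for $n\ge 9$.

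**The main obstacle.** The conceptual content is entirely classical; the real work is bookkeeping the explicit constants so that the clean bound $2n^{-4n}$ (rather than, say, $c\cdot n^{-5n}$) comes out, and in particular justifying the threshold $n\ge 9$. Two points need care: (i) ensuring the relevant factor of $Q$ (or the relevant resultant) is genuinely nonzero — if $P$ and $P'$ share the minimal polynomial of some common root, one must pass to the irreducible factors so that $\eta$ and $\eta'$ sit in coprime (or identical-but-then-handled-separately) factors, and the $\eta\neq\eta'$ hypothesis guarantees the discriminant obstruction does not vanish; (ii) choosing the sharpest available form of the discriminant upper bound (e.g. in terms of the Mahler measure $M(Q)$ rather than the naive coefficient sup-norm) so the exponent on $n$ stays at $4n$. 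Since the paper only needs this as an input, I would cite Mahler \cite{Mah-discriminant}*{Theorem 2} for the optimized constants and present the resultant/discriminant computation above as the proof sketch, checking the $n\ge 9$ case by a direct numerical estimate at the end.
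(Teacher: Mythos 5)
Your proposal is correct and follows essentially the same route as the paper: reduce to a single integer polynomial of degree at most $2n$ containing both roots (the paper uses the product of the minimal polynomials, which automatically handles your squarefreeness concern), bound its Mahler measure by $n+1$ via the $\ell^2$-norm of $\cP_n$ polynomials, and plug into Mahler's explicit separation bound $\sqrt{3}\,d^{-(d+2)/2}M(P)^{-(d-1)}$ from \cite{Mah-discriminant}*{Theorem 2}, checking numerically that this exceeds $2n^{-4n}$ for $n\ge 9$.
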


\begin{proof}
Let $P\in\Z[X]$ of degree $d$.
By Mahler's result \cite{Mah-discriminant}*{Theorem 2},
it follows that the distance between any two distinct roots of $P$
is at least
\[
\sqrt{3}d^{-(d+2)/2}M(P)^{-(d-1)},
\]
where $M(P)$ is the Mahler measure of $P$.

If $\eta$ and $\eta'$ are Galois conjugates, then we take $P$ to be
their minimal polynomial.
If they are not Galois conjugates, then we take $P$ to be the product of their
minimal polynomials.

In either case, the degree of $P$ is at most $2n$, and its Mahler measure
is at most the product of the Mahler measures of the polynomials in $\cP_n$
whose roots $\eta$ and $\eta'$ are.
By \cite{BG-heights}*{Lemma 1.6.7}, we have $M(P)\le n+1$.
Therefore, we have
\[
|\eta-\eta'|\ge \sqrt{3} (2n)^{-n-1}(n+1)^{-2n+1}>(2n)^{-3n}>2n^{-4n},
\]
provided $n\ge 9$.
\end{proof}

Finally, we note that Theorem \ref{th:Mahler} offers an alternative way to prove
a weaker version of Proposition \ref{pr:alg-approx2}.
Indeed, one can argue that any $P\in A$ must have a zero near $\l$, because $P(\l)$ is ``small''.
Then one may use Theorem \ref{th:Mahler} to conclude that these zeros must coincide.

However, our argument based on the B\'ezout identity has the advantage that it gives a similar result
(with weaker approximation) even without the hypothesis $|\l|<1-\e$.
We formulate this below in Proposition \ref{pr:alg-approx1}.
Although that result is not required for the proof of Theorem \ref{th:main}, we find it of independent interest.

In addition, our approach based on the B\'ezout identity could be used to give an alternative proof of Theorem \ref{th:Mahler}
with a worse constant, but we do not pursue this here.

The rest of this section is organized as follows.
We formulate and prove the B\'ezout identity in Section \ref{sc:Bezout}.
Section \ref{sc:gcd} is devoted to the proof of Proposition \ref{pr:alg-approx2}.
Finally, we prove Theorems \ref{th:alg-approx} and \ref{th:alg-approx2} in Section \ref{sc:aaproof}.

\subsection{}\label{sc:Bezout}

The purpose of this section is to prove the following result.

\begin{proposition}\label{pr:nullstellensatz}
Let $A\subset \cP_n$ be a set of polynomials and let $D$ be their greatest common divisor in $\Z[x]$.
Then there is a number $m\le n+1$ and polynomials $P_1,\ldots, P_m\in A$ and $Q_1,\ldots, Q_m\in \Q[x]$
such that
\[
D=\sum_{j=1}^m Q_jP_j
\]
and
\[
\deg(Q_j)\le n-1, \quad h(Q_j)\le 2^n(2n)!
\]
for all $j$.
\end{proposition}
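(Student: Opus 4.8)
\textbf{Proof proposal for Proposition \ref{pr:nullstellensatz}.}

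The plan is to run the classical Euclidean algorithm inside $\Q[x]$ on the elements of $A$, but to organize it as a sequence of at most $n+1$ steps, one for each polynomial added, and to track how the degrees and heights of the cofactors grow through the recursion. First I would fix an enumeration $P_1,\dots,P_N$ of $A$ and set $D_1=P_1$, and inductively let $D_{k+1}=\gcd(D_k,P_{k+1})$ in $\Z[x]$, so that $D_N$ is (a $\Q^\times$-multiple of) the gcd $D$ of all of $A$. The point is that the gcd stabilizes after at most $\deg(P_1)+1\le n+1$ nontrivial steps, since each genuine drop decreases the degree of $D_k$; so only $m\le n+1$ of the $P_j$ are actually needed, and we may assume $N=m$. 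At each step I would use the single-variable B\'ezout identity $D_{k+1}=U_k D_k+V_k P_{k+1}$ with $\deg U_k<\deg P_{k+1}\le n-1$ and $\deg V_k<\deg D_k\le n-1$, obtained from the extended Euclidean algorithm (equivalently, by solving the linear system encoded by a Sylvester-type matrix). Substituting the expression for $D_k$ into the one for $D_{k+1}$ and iterating yields $D=\sum_{j=1}^m Q_j P_j$ with $\deg Q_j\le n-1$ throughout (all intermediate polynomials have degree $<n$, so no reduction is lost).

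The substantive work, and the part I expect to be the main obstacle, is the height bound $h(Q_j)\le 2^n(2n)!$. This requires controlling two things: (i) the height of the cofactors $U_k,V_k$ in each individual B\'ezout step, and (ii) the amplification that occurs when these are multiplied and summed across the $\le n+1$ steps. For (i) I would write the B\'ezout relation as a linear system: the coefficients of $U_k$ and $V_k$ are the unique solution of an $(\le 2n)\times(\le 2n)$ linear system whose matrix is the Sylvester matrix of $D_k$ and $P_{k+1}$ and whose right-hand side is the coefficient vector of $D_{k+1}$. By Cramer's rule the solution entries are ratios of determinants; the numerators are bounded using Hadamard's inequality (entries of size $O(1)$ in dimension $\le 2n$ give a determinant at most $(2n)^{?}$ — more carefully, since the entries are bounded by $\max(h(D_k),h(P_{k+1}))$, one gets a bound like $(2n)!$ times a power of that height), and the denominator is a nonzero integer (or rational with controlled denominator) since everything lives over $\Z$ up to clearing content. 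Keeping the integer structure — i.e. working with the subresultant/pseudo-remainder version of the Euclidean algorithm so that denominators are divisors of resultants — is what keeps the heights from blowing up multiplicatively at each division.

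Then for (ii) I would set up an induction: if $h(D_k)\le B_k$ and the partial cofactor expressing $D_k=\sum_{j\le k}Q_j^{(k)}P_j$ satisfies $h(Q_j^{(k)})\le C_k$, then after one more step $h(D_{k+1})$ and $h(Q_j^{(k+1)})$ are bounded by $C_k$ times the height of the new B\'ezout cofactors times a factor of $(n+1)$ coming from summing $\le n$ products of degree-$\le n$ polynomials (each product of two polynomials of height $\le a,b$ and degree $\le n$ has height $\le (n+1)ab$). Unwinding, one gets a bound of the shape $\prod_{k}\big((n+1)\cdot(\text{step height})\big)$, and the job is to check that with the subresultant bookkeeping this telescopes to something no worse than $2^n(2n)!$ — the $(2n)!$ absorbing the Hadamard/Cramer determinant estimates and the $2^n$ absorbing the per-step factors of $(n+1)$ across $\le n+1$ steps. (A cleaner alternative, which I would try first, is to bypass the step-by-step recursion entirely: solve in one shot the linear system ``$\sum_j Q_j P_j = D$'' with $\deg Q_j\le n-1$, viewing the unknown coefficients of the $Q_j$ as a vector; the coefficient matrix has $\le mn\le(n+1)n$ columns but we only need a solution supported on $m\le n+1$ of the $P_j$, reducing to a square system of size $\le 2n$ whose determinant bound via Hadamard is exactly of order $(2n)!$, and whose right-hand side has height $h(D)\le n+1$ by \cite{BG-heights}*{Lemma 1.6.7}. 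Cramer's rule then gives $h(Q_j)\le (2n)!\cdot(n+1)$ directly, comfortably inside $2^n(2n)!$.) Either way, the only real care needed is choosing the right square subsystem so that the determinant is nonzero — which is possible precisely because $D$ lies in the ideal generated by $A$ — and then the height estimate is a routine Hadamard computation.
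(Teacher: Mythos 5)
Your preferred "one-shot" route is in fact the paper's: after arranging $m\le n+1$ and $\deg Q_j\le n-1$, the paper encodes $\sum_j Q_jP_j=D$ as a linear system whose matrix columns are the shifted coefficient vectors of the $P_j$ (hence with entries in $\{-1,0,1\}$) and solves it by Cramer's rule on a maximal nonsingular minor of size at most $2n$. The structural parts of your sketch (termination of the gcd after at most $n+1$ polynomials, degree reduction of the cofactors, Cramer plus a factorial determinant bound) all match.

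The genuine gap is your bound on the right-hand side of that system. You assert $h(D)\le n+1$ "by Lemma 1.6.7 of Bombieri--Gubler", but that lemma compares the Mahler measure of $D$ with that of $P$; it gives $M(D)\le M(P)\le (n+1)^{1/2}$, not a bound on the coefficients of $D$. Passing from the Mahler measure of $D$ back to its coefficients costs a factor $2^{\deg D}\le 2^n$ (Mahler's inequality $l^1(D)\le 2^{\deg D}M(D)$), and this loss is unavoidable in general: divisors of $\{0,\pm1\}$-polynomials, e.g.\ cyclotomic factors of $x^n-1$, can have coefficients far larger than $n+1$. This is exactly the content of the paper's Lemma \ref{lm:height}, which gives $l^1(D)\le 2^n n$, and it is the sole source of the factor $2^n$ in the stated bound $h(Q_j)\le 2^n(2n)!$; your computation $h(Q_j)\le (2n)!\cdot(n+1)$ silently drops it. With the corrected bound, the Cramer numerator (a determinant with one column equal to the coefficient vector of $D$ and all other entries in $\{-1,0,1\}$) is at most $l^1(D)\,(2n-1)!\le 2^n n\,(2n-1)!$, recovering the claimed estimate. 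Your fallback step-by-step Euclidean recursion would not rescue the constant either: the per-step factors you describe multiply to something like $(n+1)^{n+1}$ times iterated determinant bounds, which does not telescope to $2^n(2n)!$; the one-shot argument is needed precisely to avoid this accumulation. A smaller point: to set up the square system you must first know that \emph{some} solution with $\deg Q_j\le n-1$ exists; "$D$ lies in the ideal" alone does not bound the cofactor degrees, and one needs the reduction-modulo-$P_m$ substitution (or the two-polynomial Bézout degree count iterated carefully), which you gesture at but do not carry out.
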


Here and everywhere below, $h(Q)$ denotes the naive height, the
maximum of the numerators
and denominators of the coefficients of $Q$.
We begin with some preliminary observations.

\begin{lemma}\label{lm:height}
Let $D\in\Z[x]$ be a polynomial that divides a polynomial $P\in \cP_n$
for some $n\in \Z_{\ge 1}$.
Then $l^{1}(D)\le 2^n n$.
\end{lemma}

Here, and everywhere below, $l^{p}(D)$ denotes the $l^p$ norm of the vector
formed from the coefficients of $D$.

\begin{proof}
By  \cite{BG-heights}*{Lemma 1.6.7}, we have
\[
M(D)\le M(P)\le (\deg(P)+1)^{1/2}h(P)\le (n+1)^{1/2}\le n.
\]

We also have (see \cite{Mah-Jensen}*{Equation (4)})
\[
l^1(D)\le 2^n M(D)\le 2^n n,
\]
which was to be proved.
\end{proof}

\begin{lemma}\label{lm:lineq}
Let $n\in \Z_{>0}$ and let $v_1,\ldots, v_N\in \{-1,0,1\}^n$ be vectors.
Suppose that $w\in \Z^n$ is in the $\Q$-span of $v_1,\ldots, v_N$.
Then there are rational numbers $\l_1,\ldots, \l_N$ such that
\begin{equation}\label{eq:lineq}
w=\l_1 v_1+\ldots+\l_N v_N,
\end{equation}
at most $n$ of the $\l_i$ are non-zero and their numerators
and denominators are bounded in absolute value by $\max(n!,l^1(w)(n-1)!)$.
\end{lemma}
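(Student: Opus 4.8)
The plan is to reduce to a square, invertible linear system and then apply Cramer's rule, controlling the resulting determinants by crude cofactor-expansion bounds.

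First I would extract a maximal linearly independent subset of $v_1,\dots,v_N$; after relabeling, call it $v_1,\dots,v_k$, and note $k\le n$ since these vectors lie in $\R^n$. By maximality this subset spans the same subspace as all the $v_i$, so $w$ lies in the span of $v_1,\dots,v_k$, and by declaring $\l_{k+1}=\dots=\l_N=0$ we are left to solve $w=\l_1v_1+\dots+\l_kv_k$; this already meets the requirement that at most $n$ of the $\l_i$ be nonzero. Let $V$ be the $n\times k$ integer matrix with columns $v_1,\dots,v_k$. Since $V$ has rank $k$, I can pick $k$ row indices for which the corresponding $k\times k$ submatrix $V'$ is invertible, and let $w'\in\Z^k$ be the subvector of $w$ supported on those indices. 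Because $V$ has full column rank, the (consistent) system $V\l=w$ has a unique solution $\l$, which therefore must agree with the unique solution of $V'\l=w'$, namely $\l=(V')^{-1}w'$. Writing out Cramer's rule gives $\l_i=\det(V'_i)/\det(V')$, where $V'_i$ denotes $V'$ with its $i$th column replaced by $w'$.

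It remains to bound these determinants. A cofactor expansion along a row, combined with induction on the size, shows that any $m\times m$ matrix with entries of absolute value at most $1$ has determinant of absolute value at most $m!$. Applying this to $V'$ (whose entries lie in $\{-1,0,1\}$) gives $1\le|\det(V')|\le k!\le n!$, so the denominators of the $\l_i$ are at most $n!$. For the numerator, $V'_i$ has one column equal to $w'$ and all other columns in $\{-1,0,1\}^k$; expanding along the $w'$-column and bounding each $(k-1)\times(k-1)$ minor by $(k-1)!$ yields $|\det(V'_i)|\le (k-1)!\sum_j|w'_j|=(k-1)!\,l^1(w')\le(n-1)!\,l^1(w)$. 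Hence every $\l_i$ is a ratio of integers of absolute value at most $\max(n!,l^1(w)(n-1)!)$, and reducing the fractions to lowest terms only shrinks the numerators and denominators, giving \eqref{eq:lineq} with the claimed bounds. There is no real obstacle here: the only points needing care are the reduction to an invertible square subsystem — so that the solution is genuinely unique and Cramer's rule applies — and keeping the $l^1$-estimate anchored to $w$ itself rather than to some auxiliary vector.
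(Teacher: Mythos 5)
Your proof is correct and follows essentially the same route as the paper's: select a nonsingular maximal-rank square submatrix, solve by Cramer's rule, and bound the denominator determinant by $n!$ and the numerator determinant by $l^1(w)(n-1)!$ via expansion along the column containing $w$. Your write-up is simply more explicit about why the square subsystem's unique solution coincides with a solution of the full system.
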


\begin{proof}
We select a non-zero minor of maximal rank from the matrix $[v_1,\ldots, v_N]$ and then
solve the equation using Cramer's rule.

The rank is at most $n$, hence the number of non-zero $\l_i$ is indeed at most $n$.
The non-zero $\l_i$ are the ratio of two determinants of rank at most $n$.
In the denominator all entries come from the entries of $v_i$, hence they are $-1$, $0$ or $1$.
This determinant is clearly bounded by $n!$.

The entries of the numerator are similarly $-1$, $0$ or $1$ except for one
column whose entries come from $w$.
Expanding the determinant in that column we obtain the bound $l^1(w)(n-1)!$.
\end{proof}

\begin{proof}[Proof of Proposition \ref{pr:nullstellensatz}]
By the Nullstellensatz or simply by the Euclidean algorithm, there are polynomials
$P_1,\ldots, P_m\in A$ and $Q_1,\ldots, Q_m\in \Q[x]$ such that
\begin{equation}\label{eq:NSS}
D=\sum_{j=1}^m Q_jP_j.
\end{equation}

We may assume that the polynomials $P_j$ are linearly independent.
Indeed, we could achieve this situation by expressing some of the polynomials $P_j$
that appear in \eqref{eq:NSS} by linear combinations of others.
This yields $m\le n+1$.

We may also assume that $\deg(Q_j)< \deg(P_m)\le n$ for all $j<m$.
Indeed, if this was false for some $j<m$, we can write
$Q_j=Q_j'P_m+R_j$ and replace $Q_j$ by $R_j$ and $Q_m$ by $Q_m+Q_j'P_j$.
This substitution does not change the value of \eqref{eq:NSS}, since
\[
R_jP_j+(Q_m+Q_j'P_j)P_m=(Q_j-Q_j'P_m)P_j+(Q_m+Q_j'P_j)P_m=Q_jP_j+Q_mP_m.
\]
These substitutions can be executed simultaneously without affecting each other.

We observe that
\begin{align*}
\deg(Q_mP_m)\le &\max(\deg(D),\deg(Q_1P_1),\ldots, \deg(Q_{m-1}P_{m-1}))\\
< &\deg(P_m)+n,
\end{align*}
which in turn gives $\deg(Q_m)<n$.

We write
\[
P_j=p_{j,n}x^n+\ldots+p_{j,0},\quad D=d_nx^n+\ldots+d_0,
\]
where we allow $p_{j,n}=0$ and $d_n=0$.
We consider the vectors
\[
v_{j,k}=(\underbrace{0,\ldots,0}_{k},p_{j,0},\ldots,p_{j,n},\underbrace{0,\ldots,0}_{n-1-k})\in\{-1,0,1\}^{2n}
\]
for $j=1,\ldots,m$ and $k=0,\ldots, n-1$ and
\[
w=(d_0,\ldots,d_n,0,\ldots, 0)\in \Z^{2n}.
\]

By \eqref{eq:NSS}, $w$ is in the $\Q$-span of the vectors $v_{j,k}$.
By Lemma \ref{lm:height}, we have $l^1(w)\le 2^nn$.
We apply Lemma \ref{lm:lineq} to find rational numbers $\l_{j,k}$
with numerators and denominators bounded by $n2^n(2n-1)!$ such that
\[
w=\sum\l_{j,k}v_{j,k}.
\]

We conclude the proof by replacing $Q_{j}$ by $\sum\l_{j,k}x^k$.
\end{proof}

\subsection{}\label{sc:gcd}

The purpose of this section is to prove Proposition \ref{pr:alg-approx2} and its following
variant.

\begin{proposition}\label{pr:alg-approx1}
Let $n\in\Z$ be sufficiently large (larger than an absolute constant), let $A\subset \cP_n$ be a set of polynomials and let $0<r<(2n)^{-2n}$ and $\l\in\C$ be numbers.
Suppose $|P(\l)|\le r$ for all $P\in A$.

Then there is a number $\eta\in \C$ such that
$P(\eta)=0$ for all $P\in A$ and
\[
|\eta-\l|< r^{1/n}(2n)^2.
\]
\end{proposition}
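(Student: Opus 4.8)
The plan is to apply the B\'ezout identity of Proposition \ref{pr:nullstellensatz} to the set $A$, writing the greatest common divisor $D = \sum_{j=1}^m Q_j P_j$ with $m \le n+1$, $\deg Q_j \le n-1$ and $h(Q_j) \le 2^n (2n)!$. First I would bound $|D(\l)|$: since $|\l| $ need not be bounded away from $1$ here, I will use a crude uniform bound on $|Q_j(\l)|$ valid for $|\l|$ up to a fixed constant (say $|\l| < 2$; the case $|\l| \ge 2$ is trivial since then no root of $D$, which has small coefficients by Lemma \ref{lm:height}, is near $\l$ — actually one should just observe that either all $P \in A$ vanish identically, handled separately, or $\l$ lies in a bounded region). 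Using $\deg Q_j \le n-1$, the coefficient bound, and the trivial estimate $|Q_j(\l)| \le n \cdot 2^n(2n)! \cdot \max(1,|\l|)^{n-1}$, combined with $|P_j(\l)| \le r$ and $m \le n+1$, gives $|D(\l)| \le r \cdot C^n (2n)!$ for an absolute constant $C$ (absorbing the $\max(1,|\l|)^{n-1}$ factor into $C^n$ once we know $|\l|$ is bounded).

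Next I would locate a zero of $D$ near $\l$. Write $D(x) = d \prod_i (x - \eta_i)$ with $\deg D \le n$ and leading coefficient $d \ge 1$ (as $D \in \Z[x]$, nonzero). Then $|D(\l)| = |d| \prod_i |\l - \eta_i| \ge \prod_i |\l - \eta_i|$, so $\min_i |\l - \eta_i|^{\deg D} \le \prod_i |\l - \eta_i| \le |D(\l)| \le r \cdot C^n (2n)!$. Hence there is a root $\eta$ of $D$ with
\[
|\l - \eta| \le \big(r\, C^n (2n)!\big)^{1/\deg D} \le \big(r\, C^n (2n)!\big)^{1/n}
\]
when $\deg D \ge 1$ (if $D$ is a nonzero constant, then $|D(\l)| \ge 1 > r C^n(2n)!$ for $n$ large, a contradiction, so this case does not arise under the hypothesis $r < (2n)^{-2n}$; and if some $P \in A$ is the zero polynomial the statement is vacuous or trivial). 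Since $r < (2n)^{-2n}$, the factor $(C^n (2n)!)^{1/n} = O(n)$ is dominated: $r^{1/n} \cdot C^n(2n)!^{1/n} \le r^{1/n} (2n)^2$ for $n$ larger than an absolute constant, because $r^{1/n} < (2n)^{-2}$ absorbs any fixed polynomial-in-$n$ loss while $(2n)^{2}$ is exactly the slack claimed. Finally, since $D$ is the gcd of the $P \in A$, every $P \in A$ is divisible by $D$, so $P(\eta) = 0$ for all $P \in A$, as required.

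The main obstacle I anticipate is not the gcd argument itself but the bookkeeping of constants: making the chain $r \cdot C^n (2n)! < r^{1/n}(2n)^{2n}$, i.e. $r^{1-1/n} < (2n)^{2n} / (C^n (2n)!)$, come out cleanly from the hypothesis $r < (2n)^{-2n}$ requires checking that $r^{1-1/n} \le ((2n)^{-2n})^{1-1/n} = (2n)^{-2(n-1)}$ beats $C^n (2n)! / (2n)^{2n}$, which by Stirling is roughly $C^n / (2n)^{n}$ — comfortably true for $n$ past an absolute threshold. The one genuinely delicate point is handling $|\l|$ without an upper bound $1-\e$: I would simply note that if $|\l| \ge 2$ then $|P(\l)| \ge 2^{\deg P} - (\deg P) \ge 1 > r$ for any nonzero $P \in \cP_n$, contradicting the hypothesis unless all such $P$ vanish, so we may assume $|\l| < 2$ and the uniform bound on $|Q_j(\l)|$ goes through with an absolute $C$.
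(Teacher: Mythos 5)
Your proposal is correct and follows essentially the same route as the paper: apply the B\'ezout identity of Proposition \ref{pr:nullstellensatz} to bound $|D(\l)|$ by roughly $(2n)^{2n}r$, note $D$ is non-constant, and extract a root of the gcd within $|D(\l)|^{1/\deg D}\le r^{1/n}(2n)^2$ of $\l$. The only blemishes are harmless asymptotic slips (e.g.\ $(C^n(2n)!)^{1/n}$ is $\Theta(n^2)$, not $O(n)$, and $C^n(2n)!/(2n)^{2n}$ is roughly $(C/e^2)^n$), which do not affect the final inequalities you verify.
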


We give a bound on the number of roots a polynomial in $\cP_n$ may have
away from the unit circle using Jensen's formula.
This will be used in the proof of Proposition \ref{pr:alg-approx2}
to show that such a polynomial can take very small values only near its roots.

\begin{lemma}\label{lm:Jensen}
There is a function $a(k):\Z_{>0}\to (0,1)$ such that $\lim_{k\to\infty}a(k)=1$
and the following holds.
Let $P\in \cP_n$ be a non-zero polynomial for some $n\in \Z_{\ge 0}$.
Then there are at most $k$ non-zero roots of $P$ of absolute value less than $a(k)$.
\end{lemma}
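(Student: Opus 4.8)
The plan is to use Jensen's formula applied to the polynomial $P$ on a disc of radius $1$. Write $P(x)=a_m x^m+\ldots+a_{m_0}x^{m_0}$ with $a_m,a_{m_0}\neq 0$, so that $x^{m_0}$ is the exact power of $x$ dividing $P$; the non-zero roots of $P$ are the roots of $P(x)/x^{m_0}$. After dividing by $x^{m_0}$ we may as well assume $m_0=0$, i.e.\ $P(0)\neq 0$, and $P(0)\in\{-1,1\}$ since the coefficients lie in $\{-1,0,1\}$; thus $|P(0)|=1$. I would then recall Jensen's formula
\[
\log|P(0)|=\sum_{j:\,|\rho_j|<1}\log\frac{|\rho_j|}{1}+\frac{1}{2\pi}\int_0^{2\pi}\log|P(e^{i\theta})|\,d\theta,
\]
where $\rho_j$ runs over the zeros of $P$ inside the open unit disc, counted with multiplicity (this is the standard form after noting $P$ has no zeros exactly on a radius-$1$ circle issue is handled by a limiting argument, or one simply uses that the set of bad radii has measure zero). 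Since $|P(0)|=1$ the left side is $0$, hence
\[
\sum_{j:\,|\rho_j|<1}\log\frac{1}{|\rho_j|}=\frac{1}{2\pi}\int_0^{2\pi}\log|P(e^{i\theta})|\,d\theta.
\]

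Next I would bound the right-hand integral from above. For $\theta\in[0,2\pi)$ we have $|P(e^{i\theta})|\le \ell^1(P)\le n+1$ (the number of non-zero coefficients is at most $n+1$, each of modulus $1$), so $\frac{1}{2\pi}\int_0^{2\pi}\log|P(e^{i\theta})|\,d\theta\le \log(n+1)$. This is not yet good enough since it grows with $n$; the point of the statement, however, is only to count roots of absolute value less than a fixed threshold $a(k)<1$. So instead I keep the exact identity and argue: if $P$ had more than $k$ non-zero roots $\rho_j$ with $|\rho_j|<a(k)$, then the left-hand side would be at least $(k+1)\log a(k)^{-1}$ once we also account for the contribution of $x^{m_0}$ — wait, more cleanly: each such root contributes $\log a(k)^{-1}$ to the sum $\sum_{|\rho_j|<1}\log|\rho_j|^{-1}$, and every other term in that sum is non-negative, so having $k+1$ such roots forces
\[
(k+1)\log a(k)^{-1}\le \frac{1}{2\pi}\int_0^{2\pi}\log|P(e^{i\theta})|\,d\theta.
\]
Thus I would need an upper bound on the integral that does \emph{not} depend on $n$. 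Here I invoke $M(P)\le M(\text{any }\cP_n\text{ polynomial})\le (n+1)^{1/2}$, which still depends on $n$; so a direct application fails, and this is the main obstacle.

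To get around this I would localize: the integral $\frac{1}{2\pi}\int\log|P(e^{i\theta})|\,d\theta=\log M(P)$ by the very definition of Mahler measure, and while $M(P)$ can be as large as $\approx\sqrt{n}$, the quantity $\sum_{|\rho_j|<1}\log|\rho_j|^{-1}=\log M(P)$ is likewise at most $\tfrac12\log(n+1)$. Then the correct reading of the lemma is that $a(k)$ is chosen \emph{depending only on $k$} but the bound $\tfrac12\log(n+1)$ on the total still depends on $n$ — so the clean statement must instead use that the \emph{number} of roots inside a disc of radius $a<1$ is at most $\log M(P)/\log a^{-1}\le \tfrac{\log(n+1)}{2\log a^{-1}}$, which does grow. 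Re-examining, I believe the intended argument is different: one bounds roots \emph{near} $0$ rather than inside a large disc, using that $|P(0)|=1$ pins down the product of all root moduli (of $P/x^{m_0}$) to be $|a_m|^{-1}\ge 1/(n+1)^{?}$ — no. The honest route: by Jensen applied at radius $\rho=a(k)$ instead of at $\rho=1$,
\[
\log|P(0)|=\#\{j:|\rho_j|<a(k)\}\cdot\text{(something)}+\ldots,
\]
more precisely $\frac{1}{2\pi}\int_0^{2\pi}\log|P(a(k)e^{i\theta})|\,d\theta=\log|P(0)|+\sum_{|\rho_j|<a(k)}\log\frac{a(k)}{|\rho_j|}$. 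Since $|P(0)|=1$ and the left side is at most $\log(\ell^1(P))\le\log(n+1)$ — again $n$-dependent. Given the lemma genuinely allows $a(k)\to1$, the resolution is that $\log(n+1)$ divided by $\log a(k)^{-1}$ must be compared against $k$ only after choosing $a(k)$ close enough to $1$ in terms of $n$; but the lemma asserts $a(k)$ independent of $n$. I therefore expect the author's actual proof to apply Jensen on the \emph{unit} circle and exploit that $M(P)\le 2$ cannot be assumed, so the count of roots in $\{|x|<a\}$ is $\le\frac{\log M(P)}{\log(1/a)}$ and then a separate elementary bound (Cauchy bound: every root of $P\in\cP_n$ has modulus $>1/2$, since if $|x|\le 1/2$ then $|P(x)|\ge 1-\sum_{j\ge1}2^{-j}>0$... this gives $a(k)>1/2$ unconditionally but not $a(k)\to1$). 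The genuinely hard part, which I would need to think through carefully, is reconciling the $n$-dependence of $\log M(P)$ with the $n$-independence of $a(k)$; most likely one shows $\#\{|\rho_j|<a\}\le \log M(P)/\log a^{-1}$ and then observes that for the \emph{application} only $k=O(1)$ roots matter and $\log M(P)=O(\log n)$ is absorbed because the lemma is applied with $a(k)$ so close to $1$ that $\log a(k)^{-1}$ times the bound $k$ still dominates — i.e.\ the precise form of $a(k)$ will be dictated by setting $a(k):=\exp(-1/k)$ or similar and checking $k\cdot\tfrac1k=1$ against the right side, which forces revisiting whether the right side is really bounded independently of $n$. I would resolve this by a more careful Jensen computation at a radius between $a(k)$ and $1$, trading the integral bound for a constant.
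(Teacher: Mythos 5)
Your instinct to use Jensen's formula and to normalize so that $|P(0)|=1$ is right, and your closing sentence (``a more careful Jensen computation at a radius between $a(k)$ and $1$, trading the integral bound for a constant'') points at exactly the correct fix --- but you never carry it out, so the argument as written does not close. The missing observation is elementary: apply Jensen's formula on the circle of radius $r=k/(k+1)<1$ rather than on the unit circle. For $|z|=r$ one has
\[
|P(z)|\le 1+|z|+|z|^2+\cdots=\frac{1}{1-r}=k+1,
\]
by the geometric series --- a bound completely independent of the degree $n$. This replaces your $n$-dependent bound $|P(e^{i\theta})|\le l^1(P)\le n+1$ and dissolves the obstacle you circle around for most of the writeup. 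With it in hand, set $a(k)=\frac{k}{k+1}\cdot(k+1)^{-1/k}$ and let $z_1,\dots,z_K$ be the non-zero roots of modulus less than $a(k)$. Jensen on the disk of radius $r$, together with $|P(0)|=1$ and the nonnegativity of the omitted terms, gives
\[
K\log\big((k+1)^{1/k}\big)\le\sum_{j=1}^{K}\log\frac{r}{|z_j|}\le\int_0^1\log\big|P(re^{2\pi it})\big|\,dt\le\log(k+1),
\]
hence $K\le k$, and $a(k)\to 1$ as $k\to\infty$. This is precisely the paper's proof.

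A secondary point: several of your intermediate attempts conflate what the lemma asks with a count of \emph{all} roots inside the unit disk, for which the Mahler-measure bound $\log M(P)\le\tfrac12\log(n+1)$ is the natural (and genuinely $n$-dependent) tool. The lemma only counts roots below a threshold bounded away from $1$ uniformly in $n$, and for that the radius-$r$ Jensen formula with the geometric-series bound is the right instrument. Your remark that every root of a polynomial in $\cP_n$ with nonzero constant term has modulus greater than $1/2$ is the $k=0$ shadow of the same geometric-series idea, so you were one step away from the full argument.
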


This result is not new, see e.g. \cite{BBBP}.

\begin{proof}
Without loss of generality we may assume that $|P(0)|=1$.
Indeed, we may divide $P$ by an appropriate power of $x$ to obtain a new polynomial that has this property.
We prove the lemma taking
\[
a(k)=\frac{k}{k+1}\cdot\frac{1}{(k+1)^{1/k}}.
\]

We denote by $z_1,\ldots, z_K$ the roots of $P$ of absolute value less than
$a(k)$.
We set $r=k/(k+1)$ and apply Jensen's formula on the disk of radius $r$:
\[
\sum_{j=1}^K\log\frac{|r|}{|z_j|}\le \int_0^1\log|P(r e^{2\pi i t})| dt
\]

We note that
\[
|P(z)|\le 1+|z|+|z|^2+\ldots=\frac{1}{1-r}=k+1
\]
for all $z$ with $|z|=r$.

Thus
\[
K\cdot\log\big((k+1)^{1/k}\big)\le \log(k+1),
\]
which yields $K\le k$, as claimed.
\end{proof}

\begin{proof}[Proof of Propositions \ref{pr:alg-approx1} and \ref{pr:alg-approx2}]
We begin with  Proposition \ref{pr:alg-approx1}.
We denote by  $D$ the greatest common divisor of the polynomials in $A$. Note that the hypothesis (when $A$ is non-empty) implies that $|\lambda| \leq 2$. 
We use Proposition \ref{pr:nullstellensatz} and the fact that $|P(\l)|\le r$ for all $P\in A$.
We get
\begin{equation}\label{D-bound}
|D(\l)|\le (n+1) 2^{2n+1}(2n)!\cdot r< (2n)^{2n}\cdot r.
\end{equation}

Since $|D(\l)|<1$ and $D$ has integer coefficients, $D$ is not constant.
We denote by $\eta_1,\ldots, \eta_d$ the  roots of $D$ taking multiplicities into account.
Then
\[
|D(\l)|=\prod_{j=1}^d|\eta_j-\l|,
\]
hence there is some $j$ such that
\[
|\eta_j-\l|\le |D(\l)|^{1/d}< r^{1/n}(2n)^2,
\]
as claimed.

To prove  Proposition \ref{pr:alg-approx2}, we apply Lemma \ref{lm:Jensen} and find
that there is a number $k$ depending only on $\e$ such that any polynomial in $\cP_n$
has at most $k$ non-zero roots of modulus at most $1-\e/2$.
Since $D$ divides such a polynomial, the same bound holds for its roots.

We denote by $\eta_1,\ldots, \eta_{l}$ the non-zero roots of $D$ of modulus at most $1-\e/2$.
Then $l\le k$ and
\[
|D(\l)|\ge (\e/2)^{\deg D-l}\prod_{j=1}^l|\eta_j-\l|.
\]
Thus there is some $j$ such that
\[
|\eta_j-\l|^l\le |D(\l)|\cdot(\e/2)^{-n}.
\]

Since $r<n^{-3n}$, we have from $(\ref{D-bound})$,
\[
|D(\l)|\cdot(\e/2)^{-n}\le (4n/\e)^{2n}\cdot r< r^{1/10},
\]
if $n$ is large enough.
Hence
\[
|\eta_j-\l|< r^{1/(10l)},
\]
as required.
\end{proof}

\begin{remark}
The constant $c$ in Proposition \ref{pr:alg-approx2} can be taken arbitrarily close to $1$
if $r<n^{-Bn}$ for $B$ suitably large.

Indeed, in the setting of the above proof, denote by $\eta_j$ a root of $D$ of minimal distance to $\l$
among $\eta_1,\ldots, \eta_l$.
By Theorem \ref{th:Mahler}, there is at most one root at distance at most $n^{-4n}$ from $\l$, hence
$|\l-\eta_i|\ge n^{-4n}$ for all $i\neq j$.
From this, we obtain
\[
|\l-\eta_j|\le n^{4kn}(\e/2)^{-n} D(\l)\le r^c,
\]
where $c$ can indeed be taken arbitrarily close to $1$, provided $r$ is as small as we assumed above.

We will see in the next section that the constant $c$ in Theorems \ref{th:alg-approx} and \ref{th:alg-approx2}
are the same as in Proposition \ref{pr:alg-approx2}, hence this justifies the claims made in Remark \ref{rm:c-improve}.
\end{remark}

\subsection{}\label{sc:aaproof}

\begin{proof}[Proof of Theorem \ref{th:alg-approx}]
Let $\xi_0,\ldots, \xi_{n-1}$ be a sequence of independent unbiased $\pm1$-valued random variables.
Let $t\in \R$ be such that
\[
H(\mu_\l^{(\l^n,1]};r)\ge H\Big(\Big\lfloor r^{-1}\sum_{j=0}^{n-1}\xi_j\l^j+t\Big\rfloor\Big).
\]
For each $a\in \Z$ let
\[
\Omega_a=\Big\{(\omega_0,\ldots,\omega_{n-1})\in\{-1,1\}^n:
\Big\lfloor r^{-1}\sum_{j=0}^{n-1}\omega_j\l^j+t\Big\rfloor=a\Big\}.
\]
We note the identity
\[
H\Big(\Big\lfloor r^{-1}\sum_{j=0}^{n-1}\xi_j\l^j+t\Big\rfloor\Big)
=\sum_{a\in\Z}\frac{|\Omega_a|}{2^n}\log\Big(\frac{2^n}{|\Omega_a|}\Big).
\]
In particular $|\Omega_a|>1$ for at least one $a\in\Z$, because $H(\mu_\l^{(\l^n,1]})<n$.

We consider the set of polynomials
\[
A=\bigcup_{a\in \Z}\Big\{\sum_{j=0}^{n-1}\frac{\omega_j-\omega_j'}{2}x^j:\omega\neq\omega'\in\Omega_a \Big\}.
\]
Since $|\Omega_a|>1$ for at least one $a\in\Z$, $A$ is not empty.
We observe that $P\in \cP_n$ and $|P(\l)|\le r$ for each $P\in A$.
We apply Proposition \ref{pr:alg-approx2} and find $\eta\in \C$ such that $|\eta-\l|\le r^c$
and $P(\eta)=0$ for all $P\in A$.

For any $a\in \Z$ and $\omega,\omega'\in \Omega_a$, we have
\[
\sum_{j=0}^{n-1}\frac{\omega_j-\omega_j'}{2}\eta^j=0,
\]
hence
\[
\sum_{j=0}^{n-1}\omega_j\eta^j=\sum_{j=0}^{n-1}\omega_j'\eta^j.
\]
Thus
\[
H\Big(\sum_{j=0}^{n-1}\xi_j\eta^j\Big)
\le \sum_{a\in\Z}\frac{|\Omega_a|}{2^n}\log\Big(\frac{2^n}{|\Omega_a|}\Big).
\]

We combine our inequalities to obtain
\[
H(\mu_\l^{(\l^n,1]};r)\ge H\Big(\sum_{j=0}^{n-1}\xi_j\eta^j\Big)\ge n h_\eta.
\]
Recall $h_n=\inf H(\mu_\l^{(\l^n,1]})/n$.
\end{proof}

\begin{proof}[Proof of Theorem \ref{th:alg-approx2}]
The constant $c$ is the same as in Theorem \ref{th:alg-approx}.
We suppose to the contrary that $H(\mu_\l^{(\l^n,1]};r)<  n$
and apply that theorem.

We find an algebraic number $\eta'$, which is a root of a polynomial in $\cP_n$
such that $|\l-\eta'|<r^c\le|\l-\eta|$.
In particular, $\eta'\neq\eta$.
Moreover,
\[
|\eta-\eta'|\le|\l-\eta'|+|\l-\eta|<2n^{-4n},
\]
which contradicts Theorem \ref{th:Mahler}.
\end{proof}

\section{Increasing entropy of convolutions}\label{sc:convolution}

In this section, we apply Theorem \ref{th:low-entropy-convolution}
to improve on the entropy estimates that we obtained in the previous section.
We begin with two preliminary results in the next two sections and conclude the proof of Theorem \ref{th:main}
in Section \ref{sc:proof-th-main}

\subsection{}\label{sc:first-stage}

The purpose of this section is the following proposition.

\begin{proposition}\label{pr:first-stage}
Let $1/2< \l<1$ and  $n,K,\a>0$ be numbers, with $K\geq 10$.
Suppose
\begin{align}
H(\mu_\l; r|2r)\le& 1-\a \quad\text{for all $r>0$},\label{eq:first-stage1}\\
H(\mu_\l^{(\l^n,1]};\l^{Kn}|\l^{10n})\ge& \a n.\label{eq:first-stage2}
\end{align}
Suppose further $n>C_0(\log K)^2$, where $C_0$ is a suitably large number depending only on $\a$ and $\l$.

Then, there are numbers $R_1,\ldots, R_k$ and $a_1,\ldots, a_k$
such that
\begin{align*}
\l^{-9n}\le R_i\le \l^{-Kn},\quad R_{i+1}&\ge R_i^2,\quad  a_i\ge \frac{c}{\log K},\quad\sum_{i=1}^k a_i\ge c,\\
H(\mu_\l^{(R_ir,R_i^2r]};r|Ar)&\ge \frac{ca_i}{\log K}\log A
\end{align*}
for each $i=1,\ldots, k$, for any $r\le\l^{2Kn}$ and for any $\max(\l^{-2},2)\le A\le\l^{-n}$,
where $c$ is a constant that depends only on $\a$ and $\l$.
\end{proposition}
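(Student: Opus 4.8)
The plan is to follow the informal strategy from Section \ref{sc:strategy}: convert the hypothesis \eqref{eq:first-stage2}, which says there is $\ge \a n$ units of entropy somewhere between scales $\l^{10n}$ and $\l^{Kn}$, into many sub-windows of scales on which the conditional entropy grows linearly (in $\log A$) at a rate $\gtrsim a_i/\log K$. First I would fix $r \le \l^{2Kn}$ and $A$ in the prescribed range, and dyadically decompose the scale interval $[\l^{Kn}, \l^{10n}]$ into blocks of multiplicative size $\l^{n}$: write $R_j = \l^{-10n-jn}$ for $j = 0, 1, \ldots, K-10-1$, so that $R_j r$ ranges over the relevant scales and each block $[\l^{(j+1)n+10n}, \l^{jn+10n}]$ has logarithmic length $n\log\l^{-1}$. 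Using the scaling formula \eqref{eq:scaling} together with \eqref{eq:subset-noninteger} (to pass between $\mu_\l^{(\l^n,1]}$ at a shifted scale and $\mu_\l^{(R_j r, R_j^2 r]}$), I would identify in each block the "digit entropy" $H(\mu_\l^{(R_j r, R_j^2 r]}; r \mid Ar)$ appropriately normalised.

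The next step is a pigeonhole-type extraction. Hypothesis \eqref{eq:first-stage2} provides a total of $\ge \a n$ entropy spread across $\approx K$ blocks, so the average block carries $\gtrsim \a n / K$; but what we actually want is a \emph{subset} of blocks each carrying $\gtrsim n / \log K$ and whose total (when normalised) stays bounded below by an absolute constant $c$. This is exactly the kind of counting done in the sketch, where one keeps only the blocks above a threshold comparable to $n/\log K$ and discards the rest; the discarded blocks contribute a negligible fraction provided $K$ is large, and the hypothesis $n > C_0(\log K)^2$ is what makes the error terms ($O(1)$ losses from \eqref{eq:subset-noninteger}, from non-integer scale ratios, and from the $-2$ in Lemma \ref{lemma:Lipschitz}, accumulated over $\approx K$ blocks) genuinely lower order than the main term $n/\log K$. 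After this extraction I relabel the surviving blocks as $R_1 < R_2 < \cdots < R_k$, set $a_i$ proportional to the normalised entropy of block $i$ (divided by $\log\l^{-1}$ to absorb the base change), and check $R_{i+1} \ge R_i^2$ — which holds automatically once $n$ is large since consecutive surviving $R$'s differ by a factor $\ge \l^{-n}$ and all $R_i \ge \l^{-9n} \ge \l^{-n}\cdot\l^{-n}\cdots$, i.e. the exponents are increasing and each at least $9n$, so squaring is dominated.

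The content inequality $H(\mu_\l^{(R_i r, R_i^2 r]}; r \mid Ar) \ge (c a_i/\log K)\log A$ for \emph{every} admissible $A$ (not just $A = \l^{-n}$) is obtained from the single-$A$ statement by the sub-additivity/monotonicity of $H(\cdot; \cdot \mid \cdot)$ in the scale variables: decomposing $[r, Ar]$ into pieces and using Lemma \ref{lemma:Lipschitz} and \eqref{eq:subscale}, the entropy between $r$ and $Ar$ is at least a proportion $\log A / \log(\l^{-n})$ of the full block entropy, up to $O(1)$; again $n > C_0(\log K)^2$ kills the error. I expect the main obstacle to be bookkeeping the error terms uniformly: one must make sure the $O(1)$ losses per block, summed over the $\approx K$ blocks and then divided through by $n$, remain a small fraction of $c/\log K$ — this is precisely why the quantitative threshold on $n$ is quadratic in $\log K$ rather than linear, and getting the dependence of $C_0$ on $\a$ and $\l$ (through the constant in \eqref{eq:subset-noninteger} and through $\log\l^{-1}$) right is the delicate part. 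Hypothesis \eqref{eq:first-stage1} is used only at the end, to guarantee via Lemma \ref{lm:dimlessthan1}'s corollary that Theorem \ref{th:low-entropy-convolution} will later be applicable to these blocks; it is not needed for the extraction itself, but I would state it here so the proposition's output is immediately usable in Section \ref{sc:proof-th-main}.
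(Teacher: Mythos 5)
Your extraction step has a genuine gap that cannot be repaired by more careful bookkeeping. If you cut $[\l^{Kn},\l^{10n}]$ into $\approx K$ blocks of multiplicative size $\l^{n}$, hypothesis \eqref{eq:first-stage2} only guarantees an \emph{average} of $\a n/K$ entropy per block, and nothing prevents the entropy from being spread so that every block carries exactly this much; no thresholding can then produce blocks carrying $\gtrsim n/\log K$. The paper avoids this by using geometrically growing windows $[\l^{11(11/10)^jn},\l^{10(11/10)^jn}]$, $j=0,\dots,J-1$ with $J\approx\log K$, so that $\sum_j b_j\ge \a$ where $b_jn$ is the entropy of the $j$-th window --- but then each window has logarithmic length $\approx(11/10)^jn$, so the entropy \emph{density} inside it is still tiny. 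The missing idea is that this density is amplified by convolution: writing $\mu_\l^{(\l^{Tn},1]}$ with $T=\lfloor(11/10)^j\rfloor$ as the convolution of the $T$ rescaled pieces $\mu_\l^{(\l^{(t+1)n},\l^{tn}]}$, each of which carries $\ge b_jn$ entropy in a translate of the window, and applying Theorem \ref{th:low-entropy-convolution} $T$ times yields entropy rate $\gtrsim b_j/\log K$ per scale across the \emph{entire} window. This is also where \eqref{eq:first-stage1} is used --- inside this proposition, not ``only at the end'' as you assert --- since it supplies the hypothesis $H(\mu;2^{\s}|2^{\s+1})<1-\a$ of Theorem \ref{th:low-entropy-convolution}; the convolution theorem is applied both here (first stage) and again in Proposition \ref{pr:second-stage} (second stage).

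Your final step is also unsound: for a fixed measure, $H(\mu;r|Ar)$ is \emph{not} at least the fraction $\log A/\log(\l^{-n})$ of the entropy over the full block, up to $O(1)$; the entropy between scales can be concentrated in an arbitrarily short sub-range, so no monotonicity or subadditivity argument gives a lower bound proportional to $\log A$. The paper's Lemma \ref{lm:rescale} circumvents this by exploiting self-similarity: it covers the long scale range by $\l$-power translates of $[r,Ar]$, pigeonholes a good translate $[\l^{k}r,\l^{k}Ar]$, and then uses \eqref{eq:scaling} to move the window back to $[r,Ar]$ at the cost of replacing the support interval --- which is exactly why the conclusion is stated for the measures $\mu_\l^{(R_ir,R_i^2r]}$ with $R_i$ depending on the block, rather than for one fixed measure. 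Both the convolution amplification and the rescaling lemma are essential to the proposition, and neither is present in your proposal.
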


In the proof of Theorem \ref{th:main}, we fix a parameter $\l$ such that $\dim \mu_\l<1$.
By Lemma \ref{lm:dimlessthan1}, this implies that \eqref{eq:first-stage1} holds at all scales.
Furthermore, we will show that  \eqref{eq:first-stage2} also holds for the appropriate choice of
$n$ and $K$.
To this end, we will use the results of Section \ref{sc:initial}.
In Section \ref{sc:second-stage}, we refine the conclusion of this proposition by further applications of Theorem
\ref{th:low-entropy-convolution}.

We begin the proof of the proposition with a technical lemma.
If we have a bound for the entropy of $\mu^{(a,b]}$ between some scales,
then we can use the scaling identity \eqref{eq:scaling} to obtain bounds for $\mu^{(ar,br]}$
between some other scales.
We take this idea a step further in the next lemma, which will be used
in the proof of Proposition \ref{pr:first-stage} to construct measures,
to which we can apply Theorem \ref{th:low-entropy-convolution}.

\begin{lemma}\label{lm:rescale}
Let $a_1,a_2,b_1,b_2,r_1,r_2,s_1,s_2$ be numbers such that the following holds
\begin{align*}
0\le& a_i<b_i\le 1,\quad 0\le r_i<s_i \quad \text{for $i=1,2$}\\
\l^{-1}s_1/s_2\le& a_1/a_2\\
\l r_1/r_2\ge&b_1/b_2\\
\max(2,\l^{-2})\le&s_2/r_2\le s_1/r_1.
\end{align*}
Suppose
\[
H(\mu_\l^{(a_1,b_1]};r_1|s_1)\ge \b\log(s_1/r_1)
\]
for some $\b\ge 0$.
Then
\[
H(\mu_\l^{(a_2,b_2]};r_2|s_2)\ge \frac{\b}{6}\log(s_2/r_2).
\]
\end{lemma}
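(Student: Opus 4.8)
The plan is to reduce the statement about $\mu_\l^{(a_2,b_2]}$ to the hypothesis about $\mu_\l^{(a_1,b_1]}$ by a two-step comparison: first change the interval from $(a_2,b_2]$ to a dilate $\l^k(a_1,b_1]$ of the given interval, using the subset monotonicity \eqref{eq:subset-noninteger} (which costs a factor $1/2$ and may require non-integral scales), and then change the scales using the dilation identity \eqref{eq:scaling} followed by \eqref{eq:subscale} to pass between nearby scales (which may cost another constant factor, and a further $1/2$ from \eqref{eq:subset-noninteger} if needed). The $1/6$ in the conclusion is exactly the product of these losses: roughly $1/2$ for the interval change, and a factor like $1/3$ from the scale comparison combined with the possibility that the scale ratios $s_i/r_i$ are not commensurable.

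\textbf{Step 1: choose the dilation exponent.} First I would pick an integer $k$ such that $\l^k(a_1,b_1] \supseteq$ (a sub-interval comparable to) $(a_2,b_2]$, or rather such that after dilating we can apply \eqref{eq:subset-noninteger}. The two chain inequalities $\l^{-1}s_1/s_2 \le a_1/a_2$ and $\l r_1/r_2 \ge b_1/b_2$ are precisely the conditions that make such a $k$ exist while simultaneously keeping the dilated scales $(\l^k r_1, \l^k s_1)$ sitting inside $(r_2, s_2)$ in the right way: the first says $\l^k a_1 \le a_2$ for a suitable $k$ tied to the scale ratio $s_2/s_1$, and the second says $b_2 \le \l^{k} b_1$ for a compatible $k$ tied to $r_2/r_1$. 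Here I would unwind the inequalities carefully to check that a single integer $k$ works for both, using that the logarithms involved differ by at most $\log\l^{-1}$, which is absorbed by the slack factors $\l^{-1}$ and $\l$ in the hypotheses.

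\textbf{Step 2: transport the entropy bound.} With $k$ fixed, apply \eqref{eq:scaling} to rewrite $H(\mu_\l^{(a_1,b_1]};r_1|s_1) = H(\mu_\l^{\l^k(a_1,b_1]};\l^k r_1|\l^k s_1)$. Then I would use \eqref{eq:subset-noninteger} (since $(a_2,b_2] \subseteq \l^k(a_1,b_1]$ after the choice of $k$, or a variant thereof) to get $H(\mu_\l^{(a_2,b_2]};\l^k r_1|\l^k s_1) \ge \tfrac12 H(\mu_\l^{\l^k(a_1,b_1]};\l^k r_1|\l^k s_1)$ — this requires $\l^k s_1/\l^k r_1 = s_1/r_1 \ge 2$, which holds by the last hypothesis. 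Finally, I would compare the scale window $(\l^k r_1, \l^k s_1)$ with the target window $(r_2,s_2)$: by the sizing constraints from Step 1 these windows overlap substantially, so by \eqref{eq:subscale} (possibly after shrinking to a common sub-window, and using \eqref{eq:subset-noninteger} once more if the overlap window has non-integral scale ratio) we get $H(\mu_\l^{(a_2,b_2]};r_2|s_2) \ge c' H(\mu_\l^{(a_2,b_2]};\l^k r_1|\l^k s_1)$ for a universal $c'$, and bookkeeping the relation $\log(s_1/r_1) \ge c'' \log(s_2/r_2)$ (again from Step 1, since the windows are comparable up to bounded multiplicative factors depending only on $\l$) turns the bound $\b\log(s_1/r_1)$ into $(\b/6)\log(s_2/r_2)$.

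\textbf{Main obstacle.} The delicate part is Step 1: verifying that the three pairs of inequalities in the hypothesis conspire to allow one integer $k$ to simultaneously satisfy $\l^k a_1 \le a_2 \le b_2 \le \l^k b_1$ (so the interval containment holds) while also keeping $r_1 \le \l^{-k} r_2$ and $s_1 \ge \l^{-k} s_2$ in the correct ranges so that the scale windows line up and the ratio comparison $\log(s_1/r_1) \gtrsim \log(s_2/r_2)$ holds. This is where the precise form of the slack factors $\l^{\pm1}$ and the constraint $\max(2,\l^{-2}) \le s_2/r_2 \le s_1/r_1$ get used, and the $1/6$ has to be traced through the two or three applications of \eqref{eq:subset-noninteger} and \eqref{eq:subscale}; everything else is routine manipulation of the scaling identities.
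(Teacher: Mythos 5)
There is a genuine gap, and it is exactly at the point you flag as the ``main obstacle''. Your plan rests on the assertion that the two scale windows are ``comparable up to bounded multiplicative factors depending only on $\l$'', but the hypotheses only give $s_2/r_2\le s_1/r_1$, with no upper bound on $s_1/r_1$ in terms of $s_2/r_2$; in the application (Proposition \ref{pr:first-stage}) one has $s_1/r_1=\l^{-2(11/10)^jn}$ while $s_2/r_2=A$ may be as small as $\max(2,\l^{-2})$. Consequently no single integer $k$ can align the wide window $(\l^kr_1,\l^ks_1)$ with the narrow window $(r_2,s_2)$: your last step needs $H(\mu_\l^{(a_2,b_2]};r_2|s_2)\ge c'H(\mu_\l^{(a_2,b_2]};\l^kr_1|\l^ks_1)$, but \eqref{eq:subscale} runs the other way (the entropy over the wide window dominates that over any contained sub-window), and after ``shrinking to a common sub-window'' the constant $c'$ degrades like $\log(s_2/r_2)/\log(s_1/r_1)$, which is not bounded below. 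The paper's proof avoids this by a covering-plus-pigeonhole argument: it covers $[r_1,s_1]$ by $N\le 3\log(s_1/r_1)/\log(s_2/r_2)$ dilates $[\l^{k_i}r_2,\l^{k_i}s_2]$, uses superadditivity of $H(\cdot;\cdot|\cdot)$ over consecutive scale ranges to get $\sum_iH(\mu_\l^{(a_1,b_1]};\l^{k_i}r_2|\l^{k_i}s_2)\ge\b\log(s_1/r_1)$, and selects one index $i$ carrying at least the average, which is $\ge(\b/3)\log(s_2/r_2)$; only then does it apply \eqref{eq:scaling} and \eqref{eq:subset-noninteger}, giving $1/6=(1/3)\cdot(1/2)$. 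This averaging step is the missing idea, and the hypothesis inequalities are designed precisely so that the interval containment holds for \emph{every} $k$ with $\l r_1/r_2\le\l^{k}\le\l^{-1}s_1/s_2$, i.e.\ for every dilate in the covering, so that whichever $i$ the pigeonhole selects is usable.

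A secondary point: your containment is written in the wrong direction. To lower-bound $H(\mu_\l^{(a_2,b_2]};\cdot)$ via \eqref{eq:subset-noninteger} you need the \emph{larger} index set to be $(a_2,b_2]$ after rescaling, i.e.\ $(\l^{k}a_2,\l^{k}b_2]\supseteq(a_1,b_1]$ (equivalently $\l^{k}a_2\le a_1$ and $b_1\le\l^{k}b_2$), not $(a_2,b_2]\subseteq\l^{k}(a_1,b_1]$ as in your Step 2; with your containment the inequality from \eqref{eq:subset-noninteger} points the wrong way.
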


We comment on the inequalities imposed in the lemma, which
may look unmotivated on first reading.
They are designed to ensure that for any scaling factor $t$,
the inclusion of scales $t[r_2,s_2]\subset [\l r_1,\l^{-1}s_1]$
implies $(a_1,b_1]\subset t(a_2,b_2]$.

\begin{proof}
We choose a sequence of integers $k_1>\ldots>k_N$ such that
the intervals $[\l^{k_i}r_2,\l^{k_i}s_2]$ cover $[r_1,s_1]$, i.e. we have
\[
\l r_1\le \l^{k_1}r_2\le r_1,\quad s_1\le\l^{k_N}s_2\le\l^{-1}s_1,
\]
and
\[
\l^{k_i}s_2\ge \l^{k_{i+1}} r_2
\]
holds for all $1\le i\le N-1$.

We may choose the sequence in such a way that the overlaps
between the intervals $[\l^{ki}r_2,\l^{ki}s_2]$ are minimal, so that
\[
\l^{k_{i+1}}r_2\ge \l^{k_i+1}s_2
\]
for all $i\le N-2$.
If this is the case, we have
\[
\l^{k_{i+1}}s_2\ge \l^{k_i}s_2\cdot(\l s_2/r_2)
\]
and then
\[
\l^{k_i}s_2 \ge \l^i r_1\cdot(s_2/r_2)^i\ge r_1\cdot (s_2/r_2)^{i/2}
\]
follows for $1\le i\le N-1$ by induction.
Clearly, we may assume $\l^{k_{N-1}}s_2<s_1$, since otherwise we would not need the interval $[\l^{k_N}r_2,\l^{k_N}s_2]$
to cover $[r_2,s_2]$.
Hence we may assume that
\[
N\le 2\frac{\log(s_1/r_1)}{\log(s_2/r_2)}+1\le 3\frac{\log(s_1/r_1)}{\log(s_2/r_2)}.
\]

Using \eqref{eq:subscale}, we write
\begin{align*}
\sum_{i=1}^{N} H(&\mu_\l^{(a_1,b_1]};\l^{k_i}r_2|\l^{k_i}s_2)\\
\ge&H(\mu_\l^{(a_1,b_1]};r_1|\l^{k_2}r_2)
+\sum_{i=2}^{N-1} H(\mu_\l^{(a_1,b_1]};\l^{k_i}r_2|\l^{k_{i+1}}r_2)\\
&+H(\mu_\l^{(a_1,b_1]};\l^{k_N}r_2|s_1)\\
=&H(\mu_\l^{(a_1,b_1]};r_1|s_1)\ge \b\log(s_1/r_1).
\end{align*}
Thus there is some $i$ such that
\[
H(\mu_\l^{(a_1,b_1]};\l^{k_i}r_2|\l^{k_i}s_2)\ge\frac{\b}{N}\log(s_1/r_1)\ge
\frac{\b}{3}\log(s_2/r_2).
\]

Using $\l^{k_i}r_2\ge\l r_1$, $\l^{k_i}s_2\le \l^{-1}s_1$ and the assumptions
in the statement of the lemma, we have
\[
b_1/b_2\le \l r_1/r_2 \le \l^{k_i}\le \l^{-1}s_1/s_2\le a_1/a_2,
\]
hence
\[
(\l^{k_i}a_2,\l^{k_i}b_2]\supset (a_1,b_1].
\]
Therefore, we can use \eqref{eq:scaling} and \eqref{eq:subset-noninteger} and write
\begin{align*}
H(\mu_\l^{(a_2,b_2]};r_2|s_2)
=&H(\mu_\l^{(\l^{k_i}a_2,\l^{k_i}b_2]};\l^{k_i}r_2|\l^{k_i}s_2)\\
\ge& \frac{1}{2}H(\mu_\l^{(a_1,b_1]};\l^{k_i}r_2|\l^{k_i}s_2)
\ge\frac{\b}{6}\log(s_2/r_2).
\end{align*}
\end{proof}

\begin{proof}[Proof of Proposition \ref{pr:first-stage}]
Write
\[
J=\Big\lceil \frac{\log(K/10)}{\log(11/10)}\Big\rceil.
\]
Then $\l^{Kn}>\l^{11n(11/10)^{J-1}}$, hence
\[
H(\mu_\l^{(\l^n,1]};\l^{11(11/10)^{J-1}n}|\l^{10n})\ge \a n.
\]
For each integer $0\le j<J$ define $b_j$ by
\begin{equation}\label{eq:Jbound}
H(\mu_\l^{(\l^n,1]};\l^{11(11/10)^jn}|\l^{10(11/10)^jn})=b_j n.
\end{equation}
Then $b_0+\ldots + b_{J-1}\ge \a$.

We fix a $j\in\{0,\ldots, J-1\}$.
Put $T=\lfloor (11/10)^j\rfloor$.
We note the identity
\[
\mu_\l^{(\l^{Tn},1]}=\mu_\l^{(\l^{Tn},\l^{(T-1)n}]}*\mu_\l^{(\l^{(T-1)n},\l^{(T-2)n}]}*\ldots*\mu_\l^{(\l^{n},1]},
\]
and set out to apply Theorem \ref{th:low-entropy-convolution} and find a lower
bound on the entropy of $\mu_\l^{(\l^{Tn},1]}$ between suitably chosen scales.

Scaling \eqref{eq:Jbound} (see \eqref{eq:scaling}), we can write
\[
H(\mu_\l^{(\l^{(t+1)n},\l^{tn}]};\l^{(11(11/10)^j+t)n}|\l^{(10(11/10)^j+t)n})= b_j n.
\]
For $t=0,\ldots,T-1$, we have
\[
[\l^{(11(11/10)^j+t)n},\l^{(10(11/10)^j+t)n}]
\subseteq [\l^{12(11/10)^jn},\l^{10(11/10)^jn}],
\]
hence we can use \eqref{eq:subscale} to get
\[
H(\mu_\l^{(\l^{(t+1)n},\l^{tn}]};\l^{12(11/10)^j n}|\l^{10(11/10)^jn})\ge b_j n.
\]

We can now apply Theorem \ref{th:low-entropy-convolution}  repeatedly $T$ times
for $t=0,\ldots, T-1$ with
\begin{align*}
\mu=&\mu_\l^{(\l^{tn},\l^{(t-1)n}]}*\ldots*\mu_\l^{(\l^{n},1]}\quad
\text{or $\mu=\d_0$ if $t=0$},\\
\nu=&\mu_\l^{(\l^{(t+1)n},\l^{tn}]},\\
\b=&\frac{b_j}{2(11/10)^j\log \l^{-1}}\ge \frac{c b_j}{T},
\end{align*}
where $c>0$ is a constant depending only on $\l$.
We obtain
\[
H(\mu_\l^{(\l^{Tn},1]};\l^{12(11/10)^j n}|\l^{10(11/10)^jn})
\ge T\Big(\frac{cb_j}{T\log(T/b_j)} \log(\l^{-2(11/10)^jn})-C\Big).
\]

Assume that $j$ is such that $b_j\ge \a/2J$.
Since $j\le J-1$, the definitions  of $T$ and $J$ yield
$T\le K$ and $b_j\ge c/\log K$ for some other constant depending only on $\a$ and $\l$, which we keep denoting by $c$ by abuse of notation.
Since we assumed $n>C_0(\log K)^2$ for any fixed number $C_0$ depending on $\a$ and $\l$,
the term $TC$ becomes negligible.
Thus
\begin{equation}\label{eq:first-stage-rough}
H(\mu_\l^{(\l^{Tn},1]};\l^{12(11/10)^j n}|\l^{10(11/10)^jn})
\ge c\frac{ b_j}{\log K}  \log(\l^{-2(11/10)^jn}).
\end{equation}

Now we combine \eqref{eq:first-stage-rough} with Lemma \ref{lm:rescale}.
To that end, we need to choose a number $Q_j$ in such a way that the
following inequalities are satisfied:
\begin{align}
\l^{-1}\frac{\l^{10(11/10)^jn}}{Ar}&\le\frac{\l^{Tn}}{Q_jr},\label{eq:fs-cond1}\\
\l\frac{\l^{12(11/10)^jn}}{r}&\ge\frac{1}{Q_j^2r},\label{eq:fs-cond2}\\
A&\le\l^{-2(11/10)^jn}.\label{eq:fs-cond3}
\end{align}

Since \eqref{eq:fs-cond3} always holds when $A\le \l^{-n}$ (which we assumed in the statement of the proposition), we need to consider only the first two conditions.
We observe that the first condition is the most restrictive when $A$ is as small
as possible, hence we may assume $A=\l^{-2}$.
Recall $T\le (11/10)^j$.
Hence \eqref{eq:fs-cond1} and \eqref{eq:fs-cond2} hold if we choose $Q_j$
to satisfy
\begin{align*}
Q_j\le& \l^{-9(11/10)^jn-1}\\
Q_j^2\ge&\l^{-12(11/10)^jn-1}.
\end{align*}
So we can put $Q_j=\l^{-9(11/10)^jn}$ and satisfy these inequalities.

We can now apply Lemma \ref{lm:rescale} to \eqref{eq:first-stage-rough}
with
\begin{align*}
r_1=&\l^{12(11/10)^jn},\;s_1=\l^{10(11/10)^jn},\;a_1=\l^{Tn},\;b_1=1,\\
r_2=&r,\;s_2=Ar,\;a_2=Q_jr,\;b_2=Q_j^2r.
\end{align*}
and write
\[
H(\mu_\l^{(Q_jr,Q_j^2r]}; r|Ar)\ge c \frac{b_j}{6\log K} \log A.
\]
We note $1\le(11/10)^j\le K/10$.
Hence
\[
\l^{-9n}\le Q_j\le\l^{-9Kn/10}<\l^{-Kn}.
\]

Finally we define $a_i=b_{j_i}$ and $R_i=Q_{j_i}$ for a suitably chosen sequence $j_i$.
We first select those $j$ such that $b_j>\a/(2J)$.
This ensures that the above argument applies to all selected $j$ and that $b_j\ge c/\log K$.
We still have $\sum' b_j\ge \a/2$, where $\sum'$ indicates summation over those $j$ that we selected.
Second, we select $j$'s from an arithmetic progression with common difference $10$ such that
 the sum of the selected $b_j$'s are maximal among the possible choices.
Then we still have $\sum'' b_j\ge \a/20$, where $\sum''$ indicates summation over those $j$ that we selected
during the second cut.
Moreover, this choice ensures that $Q_{j'}\ge Q_j^{(11/10)^{10}}>Q_j^2$ if $j'>j$ are two selected indices.
Therefore this subsequence satisfies all the requirements of the proposition.
\end{proof}

\subsection{}\label{sc:second-stage}

In the proof of Theorem \ref{th:main}, we will choose sequences of
suitable parameters $\{n_j\}$ and $\{K_j\}$ such that the conditions of
Proposition \ref{pr:first-stage} hold.
In this section, we consider such sequences and apply Theorem \ref{th:low-entropy-convolution} again together
with the conclusion of Proposition \ref{pr:first-stage}
to obtain even stronger entropy bounds.
Since the entropy between the scales $r$ and $Ar$ cannot be larger than $\log A$, this will
lead to a constraint showing that the sequence $K_j$ has to grow very fast.
In the proof of Theorem \ref{th:main}, this will lead to a contradiction with the hypothesis of that theorem.

\begin{proposition}\label{pr:second-stage}
Let $1/2<\l<1$, $\a>0$ be numbers, let $\{n_j\}_{j=1}^N$ be a sequence of positive integers, and let $\{K_j\}_{j=1}^N$ be a sequence of real numbers each $\geq 10$.

Suppose
\begin{align}
n_{j+1}\ge& K_j n_j\quad\text{for all $j=1,\ldots, N-1$},\label{eq:second-stage0}\\
H(\mu_\l; r|2r)\le& 1-\a \quad\text{for all $r>0$},\label{eq:second-stage1}\\
H(\mu_\l^{(\l^{n_j},1]};\l^{K_jn_j}|\l^{10n_j})\ge& \a n_j\quad\text{for all $j=1,\ldots, N$},\label{eq:second-stage2}\\
n_j\ge&C_0(\log K_j)^2,\label{eq:second-stage3}
\end{align}
where $C_0$ is a sufficiently large number depending only on $\l$ and $\a$.
Suppose further that $n_1$ is sufficiently large so that $\l^{-n_1}\ge\max(2,\l^{-2})$.

Then
\begin{equation}\label{eq:second-stage}
\sum_{j=1}^N\frac{1}{\log K_j\log\log K_j}<C\Big(1+\frac{\sum_{j=1}^N\log K_j}{n_1}\Big),
\end{equation}
where $C$ is a constant that depends only on $\a$ and $\l$.
\end{proposition}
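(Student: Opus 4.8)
The plan is to iterate Theorem \ref{th:low-entropy-convolution} a second time, using the output of Proposition \ref{pr:first-stage} for each index $j$ as the ``source of entropy'' $\nu$ and the accumulated convolution of the measures coming from earlier indices as the low-entropy factor $\mu$. First I would record that, by Proposition \ref{pr:first-stage} applied with $n=n_j$, $K=K_j$, there are scales $R^{(j)}_1,\ldots,R^{(j)}_{k_j}$ in $[\l^{-9n_j},\l^{-K_jn_j}]$ with $R^{(j)}_{i+1}\ge (R^{(j)}_i)^2$, weights $a^{(j)}_i\ge c/\log K_j$ with $\sum_i a^{(j)}_i\ge c$, and the entropy bound
\[
H(\mu_\l^{(R^{(j)}_i r,(R^{(j)}_i)^2 r]};r\mid Ar)\ge \frac{c a^{(j)}_i}{\log K_j}\log A
\]
valid for all $r\le \l^{2K_jn_j}$ and all $\max(2,\l^{-2})\le A\le \l^{-n_j}$. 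The point of keeping $A$ free is that, in the second stage, $A$ will be chosen of size roughly $\log K_j$, so that $\b\approx (\log K_j)^{-1}$ feeds into Theorem \ref{th:low-entropy-convolution} and produces the gain $c\b(\log\b^{-1})^{-1}=c(\log K_j\log\log K_j)^{-1}$ per block, which is exactly the summand on the left of \eqref{eq:second-stage}.

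Next I would set up the bookkeeping of scales. Because of the growth hypothesis \eqref{eq:second-stage0}, namely $n_{j+1}\ge K_jn_j$, the scale windows $[\l^{-9n_j},\l^{-K_jn_j}]$ occurring for different $j$ can be nested inside one common pair of scales $r, Ar$ with $A$ not too large: concretely, fix the fine scale $r$ below $\l^{2K_Nn_N}$, and observe that the digit-scale windows $[R^{(j)}_i r,(R^{(j)}_i)^2 r]$ for different $j$ live at geometrically separated places because $R^{(j)}_i\le \l^{-K_jn_j}\le \l^{-n_{j+1}}\le R^{(j+1)}_1$ (up to constants). Thus the measures $\mu_\l^{(R^{(j)}_i r,(R^{(j)}_i)^2 r]}$ for distinct $(j,i)$ are ``independent coordinates'' of $\mu_\l$ in the sense that their convolution is a restriction $\mu_\l^{I}$ for an appropriate $I\subset\R_{>0}$, and hypothesis \eqref{eq:second-stage1} together with \eqref{eq:scaling} and \eqref{eq:subset} guarantees that every such $\mu_\l^{I}$ satisfies $H(\mu_\l^{I};s\mid 2s)\le 1-\a$ for all $s$ — this is precisely what is needed to invoke Theorem \ref{th:low-entropy-convolution} with the low-entropy hypothesis. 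Then I would order all pairs $(j,i)$ and apply Theorem \ref{th:low-entropy-convolution} inductively: at step $(j,i)$ take $\nu=\mu_\l^{(R^{(j)}_i r,(R^{(j)}_i)^2 r]}$, take $\mu$ to be the convolution of all previously used factors, take $[\s_2,\s_1]$ to be the dyadic scale interval corresponding to $[r, Ar]$ with $A$ chosen as a fixed power of $\log K_j$ so that $\b\asymp a^{(j)}_i/\log K_j\cdot(\text{const})$, and check that the hypotheses of the theorem hold. Each application adds $c\b(\log\b^{-1})^{-1}(\s_1-\s_2)-C$ to the entropy; summing over $i$ for fixed $j$ gives a total gain of order $(\sum_i a^{(j)}_i)(\log K_j\log\log K_j)^{-1}(\s_1-\s_2)\gtrsim c(\log K_j\log\log K_j)^{-1}\log A$ minus an error $k_jC$, and the number $k_j$ of blocks is controlled ($k_j\le$ const, or at worst absorbed because $\log A$ dominates once $n_j\ge C_0(\log K_j)^2$). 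Finally, summing over all $j=1,\ldots,N$, the accumulated entropy between scales $r$ and (the final product scale) is at least
\[
c\sum_{j=1}^N\frac{1}{\log K_j\log\log K_j}\cdot(\s_1-\s_2)_j \;-\; (\text{errors}),
\]
while by \eqref{eq:GenEntUpperbnd} and Lemma \ref{lemma:Lipschitz} this total entropy cannot exceed the total number of dyadic scales traversed, which is $O(\sum_j\log K_j)$ plus $O(n_1)$ coming from the coarse end; rearranging yields \eqref{eq:second-stage}.

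The main obstacle I anticipate is the careful accounting of scales so that all the technical hypotheses of Theorem \ref{th:low-entropy-convolution} and of the nesting are simultaneously met: one needs the scale-window $[r,Ar]$ for block $(j,i)$ to contain enough dyadic scales (so $\s_1-\s_2$ is large compared to the additive constant $C$, which is where $n_j\ge C_0(\log K_j)^2$ is used), one needs $A\le \l^{-n_j}$ so that Proposition \ref{pr:first-stage} applies with $A$ of size $\mathrm{poly}(\log K_j)$, one needs the low-entropy hypothesis on the growing convolution factor $\mu$ uniformly (handled by Lemma \ref{lm:dimlessthan1}-type input, i.e. \eqref{eq:second-stage1} plus scaling), and one needs the separation $R^{(j)}_{i'}\gg (R^{(j)}_i)^2$ within a block and across blocks so that the convolution structure $\mu_\l^{I_1}*\cdots=\mu_\l^{I_1\dot\cup\cdots}$ is genuinely valid. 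A secondary subtlety is that the entropy gains from Theorem \ref{th:low-entropy-convolution} are stated between a single pair of scales $2^{\s_2}\mid 2^{\s_1}$, whereas the different blocks naturally live between different scale pairs; this is reconciled by summing telescopically via \eqref{eq:subscale}, choosing the scale pairs to be consecutive and nested so that the individual gains add up along one long chain from the fine scale $r$ up to scale $1$. Once this scale combinatorics is organized, the inequality \eqref{eq:second-stage} drops out by comparing the lower bound (sum of gains) with the trivial upper bound (total number of scales).
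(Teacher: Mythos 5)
Your architecture is exactly the paper's: apply Proposition \ref{pr:first-stage} once per index $j$, use \eqref{eq:second-stage0} to check that the resulting scale windows $(R_{j,i}r,R_{j,i}^2r]$ are pairwise disjoint so that $\mu_\l$ factors as a convolution over all pairs $(j,i)$ (times a remainder), iterate Theorem \ref{th:low-entropy-convolution} with $\b\asymp a_{j,i}/\log K_j$ to get gains of size $\b(\log\b^{-1})^{-1}\asymp(\log K_j\log\log K_j)^{-1}$ per unit of scale, and compare the accumulated gain with the trivial upper bound \eqref{eq:GenEntUpperbnd}. That is the intended proof.

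However, there is a concrete misstep in your handling of the parameter $A$. You propose to take $A$ of size a fixed power of $\log K_j$, varying with $j$, ``so that $\b\approx(\log K_j)^{-1}$''. But $\b$ does not depend on $A$ at all: Proposition \ref{pr:first-stage} gives $H(\mu_\l^{(R_{j,i}r,R_{j,i}^2r]};r|Ar)\ge\frac{ca_{j,i}}{\log K_j}\log A$ for \emph{every} admissible $A$, so $\b=ca_{j,i}/\log K_j$ whatever $A$ is. What the choice of $A$ does control is the ratio between the per-application gain $c\b(\log\b^{-1})^{-1}\log A$ and the additive loss $C$. With your choice $\log A\asymp\log\log K_j$, the total gain from block $j$ is only $\asymp(\log K_j)^{-1}$, while the total loss is $Ck_j$ with $k_j$ as large as $\asymp\log K_j$ (your parenthetical ``$k_j\le$ const'' is not what Proposition \ref{pr:first-stage} provides), so the losses swamp the gains and the argument returns nothing. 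The correct choice is a \emph{single} window for all $(j,i)$: take $r=\l^{2K_Nn_N}$ and $A=\l^{-n_1}$, which is admissible for every $j$ since $n_1\le n_j$. Then every application of Theorem \ref{th:low-entropy-convolution} registers its gain between the same pair of scales (no telescoping over nested windows is needed), the upper bound is simply $H(\mu_\l;r|Ar)\le\log A=n_1\log\l^{-1}$, the accumulated error is $C\sum_jk_j\le C\sum_j\log K_j$, and dividing the resulting inequality by $\log A$ produces exactly the term $\sum_j\log K_j/n_1$ in \eqref{eq:second-stage}. In your sketch that term is instead attributed to the entropy upper bound (``total number of dyadic scales traversed''), which is a symptom of the same confusion about the windows; once $A=\l^{-n_1}$ is fixed uniformly, both the source of that term and the role of hypothesis \eqref{eq:second-stage3} (making $\log A$ large enough relative to $(\log K_j)^2\log\log K_j$ so the gains are genuinely positive) fall into place.
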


\begin{proof}
Set $A=\l^{-n_1}$ and $r=\l^{2K_Nn_N}$, so $A\ge\max(2,\l^{-2})$.
We apply Proposition \ref{pr:first-stage} with $n=n_j$ and $K=K_j$.
We find numbers $R_{j,i}\in [\l^{-9 n_j},\l^{-K_j n_j}]$ and $a_{j,i}\ge c (\log K_j)^{-1}$ such that
$\sum_{i}a_{j,i}\ge c$ for each $j$ and
\[
H(\mu_\l^{(R_{j,i}r,R_{j,i}^2 r]};r|Ar)\ge \frac{c a_{j,i}}{\log K_j}\log A
\]
for each $j$ and $i$.

We observe that
\[
R_{j+1, \bullet}\ge \l^{-9 n_{j+1}}\ge \l^{-9 K_j n_j}> R_{j,\bullet}^2
\]
for $j=1,\ldots, N-1$.
We also recall $R_{j,i+1}\ge R_{j,i}^2$ from  Proposition \ref{pr:first-stage}.
Thus the intervals $(R_{j,i}r,R_{j,i}^2 r]$ are disjoint.

This means that we can write
\[
\mu_\l= \nu * \Asterisk_{j=1}^N\Asterisk_{i}\mu_\l^{(R_{j,i}r,R_{j,i}^2 r]},
\]
for some probability measure $\nu$.
We can then apply Theorem \ref{th:low-entropy-convolution} repeatedly
with
\[
\nu=\mu_\l^{(R_{j,i}r,R_{j,i}^2 r]} \quad \text{and} \quad \b=\frac{ca_{j,i}}{\log K_j}
\]
for each $j$ and $i$.
Note $\log \b^{-1}\le C\log\log K_j$, since $a_{j,i}\ge c/(\log K_j)$, where $C$
is a constant that depends only on $\l$ and $\a$.
We obtain
\[
H(\mu_\l;r|Ar)\ge \sum_{j=1}^N\sum_i\Big(\frac{c a_{j,i}}{\log K_j\log\log K_j}\log A - C\Big),
\]
where $c,C>0$ are some numbers that depend only on $\l$ and $\a$.

Since $\sum_i a_{j,i}\ge c$, for each $j$ and the entropy between scales of ratio $A$ cannot
be larger than $2\log A$ (see Lemma \ref{lemma:Lipschitz}), we get
\[
 \sum_{j=1}^N\frac{c}{\log K_j \log\log K_j}\log A <2\log A +C\sum_{j=1}^N\log K_j.
\]
This proves the claim upon dividing both sides by $c\log A$, since
$\log A= \log (\l^{-1}) n_1$.
\end{proof}

\subsection{Proof of Theorem \ref{th:main}}\label{sc:proof-th-main}

Let $1/2<\l<1$ be a number such that $\dim\mu_\l <1$, and
fix a small number $\e>0$ such that $\dim\mu_\l+4\e<1$.
We fix a large number $A$, whose value will be set at the end of the proof depending only on
$\l$ and $\e$.
We assume to the contrary that
there are arbitrarily large integers $n_0$ such that
\begin{equation}\label{eq:separation}
|\eta-\l|> \exp(-\exp(\log n\log^{(3)} n))
\end{equation}
for all $\eta\in E_{n,\dim\mu_\l+4\e}$ for all $n\in [n_0,\exp^{(5)}(\log^{(5)}(n_0)+A)]$.
We show that this leads to a contradiction provided $A$ is a sufficiently large number depending on $\l$ and $\e$.

The assumption $\dim\mu_\l<1$ implies that there is $\a>0$ such that
\begin{equation}\label{eq:dimension-gap}
H(\mu_\l;r|2r)<1-\a
\end{equation}
for all $r$; see Lemma \ref{lm:dimlessthan1}.
In addition, we have
\begin{equation}\label{eq:dim-entropy}
H(\mu_\l;r)\le (\dim \mu_\l+\e)\log r^{-1}
\end{equation}
by Lemma \ref{lm:dim-entropy} for all sufficiently small $r$ (depending on $\e$ and $\l$).
Moreover, \eqref{eq:dimension-gap} and \eqref{eq:dim-entropy} hold
for the measure $\mu_\l^I$ in place of $\mu_\l$ for any $I\subset(0,1]$.
Indeed,
\begin{align*}
H(\mu_\l;r)=&\lim_{N\to\infty} H(\mu_\l;r|Nr),\\
H(\mu_\l^I;r)=&\lim_{N\to\infty} H(\mu_\l^I;r|Nr),
\end{align*}
so $H(\mu_\l^{I};r)\le H(\mu_\l;r)$ follows from \eqref{eq:subset}.

It follows from the work of Hochman \cite{hochman}*{Theorem 1.3} that
\footnote{We could avoid using Hochman's result here if we replaced the number $10$ by
$1+\e$.
If we do this, then Propositions \ref{pr:first-stage} and \ref{pr:second-stage}
and their proofs need to be adjusted accordingly, which would turn the calculations
even more tedious.}
\begin{equation}\label{eq:hoc}
H(\mu_\l^{(\l^n,1]}; \l^{10n}|\l^n)<\e\log(\l^{-1}) n,
\end{equation}
if $n$ is large enough  (depending on $\e$ and $\l$).

We fix an integer $n_0$ such that \eqref{eq:separation} holds, and which is sufficiently large;
we require, in particular, that \eqref{eq:hoc} holds for all $n\ge n_0$,
\eqref{eq:dim-entropy} holds for all $r<\l^{n_0}$ and $\l^{-n_0}\ge \max(2,\l^{-2})$.
We define a sequence of integers $n_1,n_2,\ldots, n_N$ by a recursive procedure.
Suppose that $n_j$ is already defined for some $j\ge 0$ and we choose the value of $n_{j+1}$
as follows.
We take
\[
m=\Big\lceil \frac{4n_{j}\log n_j}{c_0\log \l^{-1}}\Big\rceil,
\]
where $c_0$ denotes the minimum of the constants $c$ from Theorems \ref{th:alg-approx} and \ref{th:alg-approx2}
applied with $1-\l$ in the role of $\e$.

We consider two cases.
First, suppose
\begin{equation}\label{eq:first-case}
H(\mu_\l^{(\l^m,1]};m^{-4m/c_0})\ge m\log(\l^{-1})(\dim\mu_\l+3\e).
\end{equation}
In this case, we have
\begin{align*}
H(\mu_\l^{(\l^{m},1]};&m^{-4m/c_0}|\l^{10 m})\\
=& H(\mu_\l^{(\l^m,1]};m^{-4m/c_0})-H(\mu_\l^{(\l^m,1]};\l^m)
-H(\mu_\l^{(\l^m,1]};\l^{10 m}|\l^m)\\
\ge& m\log(\l^{-1})(\dim\mu_\l+3\e) -m\log(\l^{-1})(\dim\mu_\l+\e)-\e m\log (\l^{-1})\\
\ge& \e m \log (\l^{-1}).
\end{align*}
We used \eqref{eq:first-case}, \eqref{eq:dim-entropy} and \eqref{eq:hoc}.
To estimate $H(\mu_\l^{(\l^m,1]};\l^m)$, we used \eqref{eq:dim-entropy}.
In this case,  we set $n_{j+1}=m$.

Second,  suppose
\[
H(\mu_\l^{(\l^m,1]};m^{-4m/c_0})< m\log(\l^{-1})(\dim\mu_\l+3\e).
\]
We apply Theorem \ref{th:alg-approx} and find that there is an algebraic number
$\eta$ that is a root of a polynomial in $\cP_m$, $h_\eta<\log(\l^{-1})(\dim\mu_\l+3\e)$
and $|\l-\eta|< m^{-4m}$.
We assume as we may that $n_0$ is sufficiently large that this guarantees
$h_\eta<\log \eta^{-1}(\dim\mu_\l+4\e)$.
We note $|\eta-\overline\eta|<2 m^{-4m}$, hence $\eta$ is real by Theorem \ref{th:Mahler}.
By Hochman's formula \eqref{eq:Hochman} for the dimension of Bernoulli convolutions
for algebraic parameters, we have $\dim\mu_\eta<\dim\mu_\l+4\e$
and hence $\eta\in E_{m,\dim\mu_\l+4\e}$.

In this case, we set $n_{j+1}$ to be the largest integer $n$ such that $|\l-\eta|<n^{-4n}$.
In particular, $n_{j+1}\ge m$.
It follows from Theorem \ref{th:alg-approx2} applied with $n=n_{j+1}$ and $r=(n_{j+1}+1)^{-4(n_{j+1}+1)/c_0}$
that
\[
H(\mu_\l^{(\l^{n_{j+1}},1]};(n_{j+1}+1)^{-4(n_{j+1}+1)/c_0})=n_{j+1}.
\]
A calculation similar to what we did in the previous case yields
\[
H(\mu_\l^{(\l^{n_{j+1}},1]};n_{j+1}^{-4n_{j+1}/c_0}|\l^{10 n_{j+1}})\ge (1-\log \l^{-1}) n_{j+1},
\]
if $n_0$ is sufficiently large.
(Recall $\dim\mu_\l+4\e<1$.)

We set
\[
K_{j+1}=\frac{4\log(n_{j+1})}{c_0\log \l^{-1}}
\]
and note that
\[
H(\mu_\l^{(\l^{n_{j+1}},1]}; \l^{K_{j+1}n_{j+1}}|\l^{10 n_{j+1}})\ge \e \log(\l^{-1})n_{j+1}
\]
holds in both cases, (provided $\e\log\l^{-1}<1-\log\l^{-1}$, which we may assume).

The choice of $K_{j+1}$ and $m$ in the recursive definition ensures that $n_{j+2}\ge K_{j+1}n_{j+1}$.
Moreover, $n_{j+1}\ge C_0(\log K_{j+1})^2$ also holds with an arbitrarily large constant $C_0$,
provided $n_0$ is sufficiently large.
This means that Proposition \ref{pr:second-stage} is applicable to the sequences $\{n_j\}$ and $\{K_j\}$.
We estimate how fast these sequences may grow.
Let $m$ and $\eta$ be as in the definition of $n_{j+1}$ above.
Suppose that
\begin{equation}\label{eq:m-condition}
m\in [n_0,\exp^{(5)}(\log^{(5)}n_0+A)].
\end{equation}
(We will return to this condition at the end of the proof.)
Then
\[
|\l-\eta|>\exp(-\exp(\log m\log^{(3)}m)).
\]
by the indirect assumption \eqref{eq:separation},
and hence
\[
\exp(n_{j+1})<n_{j+1}^{4 n_{j+1}}<|\l-\eta|^{-1}<\exp^{(2)}(\log m \log^{(3)} m),
\]
which together with $m\le Cn_j\log n_j$ (for some $C$ depending only on $\l$) yields
\[
n_{j+1}<\exp(2\log n_j\log^{(3)} n_j),
\]
provided $n_0$ is sufficiently large.

\noindent{\bf Claim.}
For each $j\ge 0$, we have
\[
n_j<\exp^{(2)}((2j+j_0)\log^{(2)}(2j+j_0)),
\]
where $j_0=\log^{(2)}(n_0)$.

\begin{proof}
The claim is trivial for $j=0$, and we prove the $j>0$ case by induction.
We suppose that the claim holds for some $j$ and prove that it also holds for $j+1$.
We first note
\[
\log^{(3)} n_j< 2\log(2j+j_0).
\]
We can write
\begin{align*}
n_{j+1}<&\exp(2\log n_j\log^{(3)} n_j)\\
<&\exp(2\exp((2j+j_0)\log^{(2)}(2j+j_0))\cdot 2\log(2j+j_0))\\
=&\exp^{(2)}((2j+j_0)\log^{(2)}(2j+j_0)+\log^{(2)}(2j+j_0)+2)\\
<&\exp^{(2)}((2(j+1)+j_0)\log^{(2)}(2(j+1)+j_0)),
\end{align*}
where the last line holds, because we assumed that $n_0$ is large enough, so in particular,
we have $\log^{(2)}(2j+j_0)> 2$.
This proves the claim.
\end{proof}

Using the above claim, we note that for some positive $C_\l$ depending on $\l$ only,
\begin{align*}
\frac{1}{\log K_j\log^{(2)}K_j}
=& \frac{1}{\log(C_\l\log n_j)\log^{(2)}(C_\l\log n_j)}\\
\ge& \frac{1}{2(2j+j_0)\log(2j+j_0)\log^{(2)}(2j+j_0)},
\end{align*}
provided $n_0$ is large enough.

We can write
\begin{align*}
\sum_{j=1}^N&\frac{1}{\log K_j\log\log K_j}\\
\ge& \sum_{j=1}^N\frac{1}{2(2j+j_0)\log(2j+j_0)\log^{(2)}(2j+j_0)}\\
\ge& c (\log^{(3)}(N+j_0)-\log^{(3)} j_0),
\end{align*}
where $c$ is an absolute constant.

We write $B=2C/c$, where $c$ is the above constant and $C$ is the constant from Proposition \ref{pr:second-stage} applied with the minimum of $\e \log \l^{-1}$ and $\a$ in the role of $\a$.
We put
\[
N:=\lfloor\exp^{(3)}(\log^{(3)}(j_0)+B)\rfloor.
\]
Then
\begin{equation}\label{eq:logKloglogK}
\sum_{j=1}^N\frac{1}{\log K_j\log\log K_j}
\ge c(\log^{(3)}N-\log^{(3)}j_0)
\ge cB
\ge 2C.
\end{equation}

On the other hand, we can write
\[
\log K_j\le 2(2j+j_0)\log^{(2)}(2j+j_0) \leq 6N \log^{(2)} (3N) <10N^2
\]
for $j\le N$, if $n_0$ and hence $j_0$ is sufficiently large, and this yields
\[
\sum_{j=1}^N \log K_j\le 10 N^3.
\]
We note that
\[
N\le \exp^{(3)}(\log^{(2)}(j_0))=\exp(j_0)=\log(n_0),
\]
if $n_0$ and hence $j_0$ is sufficiently large.
This and $n_1>n_0$ implies
\[
\frac{\sum_{j=1}^N \log K_j}{n_1}<1,
\]
provided $n_0$ is sufficiently large,
and hence we have a contradiction with \eqref{eq:logKloglogK} and
Proposition \ref{pr:second-stage}.

It remains to verify that the condition \eqref{eq:m-condition} holds each time we used it.
Clearly we always had $n_0\le m\le n_N$.
Since $N\ge j_0$, we have
\begin{align*}
2N+j_0\le& 4 N\le \exp(2+\exp^{(2)}(\log^{(3)}(j_0)+B))\\
\le& \exp^{(2)}(2+\exp(\log^{(3)}(j_0)+B))
\le \exp^{(3)}(\log^{(3)}(j_0)+B+2).
\end{align*}
In addition,
\begin{align*}
(2N+j_0)&\log^{(2)}(2N+j_0)\\
\le& \exp^{(3)}(\log^{(3)}(j_0)+B+2)\exp(\log^{(3)}(j_0)+B+2)\\
\le& \exp(\exp^{(2)}(\log^{(3)}(j_0)+B+2)+\log^{(3)}(j_0)+B+2)\\
\le& \exp(2\exp^{(2)}(\log^{(3)}(j_0)+B+2))\\
\le& \exp^{(3)}(\log^{(3)}(j_0)+B+3).
\end{align*}
Then we have
\[
n_N\le \exp^{(2)}((2N+j_0)\log^{(2)}(2N+j_0))
\le  \exp^{(5)}(\log^{(5)}(n_0)+B+3).
\]
This shows that \eqref{eq:m-condition} holds provided $A\ge B+3$.
This completes the proof of the theorem.

\bibliographystyle{abbrv}
\bibliography{bibfile}

\end{document}